\def\dint{\displaystyle\int}
\def\non{\nonumber}
\begin{document}

\title{Fast reaction limit of reaction diffusion systems with nonlinear diffusion}

\author{Elaine Crooks}
\address{Department of Mathematics, Faculty of Science and Engineering,\\ Swansea University, Swansea SA1 8EN, UK, e.c.m.crooks@swansea.ac.uk}

\author{Yini Du}
\address{Department of Mathematics, Faculty of Science and Engineering,\\ Swansea University, Swansea SA1 8EN, UK, yini.du@swansea.ac.uk}

\maketitle

\begin{abstract}
In this paper, we present an approach to characterising fast-reaction limits of systems with nonlinear diffusion, when there are either two reaction-diffusion equations, or one reaction-diffusion equation and one ordinary differential equation, on unbounded domains. Here, we replace the terms of the form $u_{xx}$ in usual reaction-diffusion equation, which represent linear diffusion, by terms of form $\phi(u)_{xx}$, representing nonlinear diffusion. We prove the convergence as $k\rightarrow\infty$ to a limit that is determined by the unique solution of a certain scalar nonlinear diffusion limit problem.
\end{abstract}

\keywords{Nonlinear diffusion; Reaction diffusion system; Fast reaction limit.}

\section{Introduction}
 Nonlinear diffusion is needed in certain modelling scenarios to describe processes involving fluid flow, heat transfer or diffusion. For instance, it can describe the flow of an isentropic gas through a porous medium \cite{gas}. In this paper, we study the reaction diffusion systems with nonlinear diffusion in one-dimensional spatial domains.
A prototype for the form of nonlinear diffusion considered with this paper is $(u^m)_{xx}$, where $m>1$. We will consider general nonlinear diffusion terms of the form $\phi(u)_{xx}$, where the function $\phi\in C^2(\mathbb{R})$, $\phi$ and $\phi'$ are assumed to be strictly increasing with
\begin{align}
\phi(s)>0\ {\rm as}\ s>0\ {\rm and}\ \phi'(s)=\phi(s)=0\ {\rm when}\ s=0. \label{phi}
\end{align}

The problems we consider are motivated by a prototype reaction diffusion system which consists of a chemical A and an immobile substrate B that react in a semi-infinite region. We denote by $u$ the concentration of A and $v$ the concentration of B at $x\in(0,\infty)$ and time $t\in(0,T)$. The reaction of $u$ and $v$ is described as
\begin{align}
\left\{
\begin{array}{llll}
  u_{t}=u_{xx}-kuv,\\
  v_{t}=-kuv,
 \end{array}
\right.\label{m}
\end{align}
 where $k$ is the rate constant of the reaction (which is positive). The chemical reaction can be modelled for simplicity by the one-dimensional spatial domain $(0,\infty)$ with $u=U_0$ at the surface $x=0$. That $u$ and $v$ are nonnegative is natural since they typically correspond to concentration of chemical substances.

 In \cite{1996}, Hilhorst, van der Hout and Peletier study the asymptotic behaviour of $k$-dependent solutions $(u^k,v^k)$ of (\ref{m}) as $k\rightarrow\infty$ (i.e. the reaction is very fast). They establish a free boundary problem which is satisfied in the limit when solutions $(u^k,v^k)$ converge to a self-similar limit $(u,v)\left(\frac{x}{\sqrt t}\right)$ as $k\rightarrow\infty$. The free boundary has the form $x=a\sqrt t$, where $a>0$ and divides the area in which the mobile chemical A is present from the area where A is absent. The fast-reaction limit of (\ref{m}) can be motivated by the study of penetration of radio-labeled antibodies into tumourous tissue since the attachment of antibodies to antigens in the tissue may react very fast.

Modelling can give rise to other systems related to (\ref{m}) such that the fast-reaction limit in which one mobile substance invades a \textit{mobile} substrate. Among other problems, Crooks and Hilhorst \cite{selfsim} study the system analogous to (\ref{m}) when reactant $u$ and substrate $v$ are both mobile, for example, when carbonic acid penetrates into water. In this case, the substrate will diffuse, which is modelled by introducing a term $d_vv_{xx}$ where $d_v>0$. The paper \cite{selfsim} is concerned with the free boundary problems in the limit that $k\rightarrow\infty$ in four cases: $d_v>0$ with two mobile reactants, $d_v=0$ with one mobile and one immobile reactant, problems defined on the spatial domain $(0,\infty)$ as in (\ref{a}) and also on the whole real line $\mathbb{R}$, which can arise, for instance, in modelling neutralisation of an acid and a base that are initially separated. In all four cases, the free boundary has the form $x=a\sqrt t$ where the constant $a$ is determined by a different equation in each case and plays an important role in characterising the rate of penetration of one substance into the other in the limit $k\rightarrow\infty$. When the problem is considered on the spatial domain $\mathbb{R}$ with $d_v>0$, the constant $a$ in the corresponding limit is not necessarily positive. Note that when $a>0$, substance $u$ penetrates into substance $v$, while on the other hand, $v$ penetrates into $u$ when $a<0$. For each of the problems with $d_v\ge0$ on both the spatial domains $\mathbb{R}$ and $(0,\infty)$, an explicit formula is given in \cite{selfsim} for the self-similar limit function.

An analogue of (\ref{m}) with nonlinear diffusion in bounded multi-dimensional domains is studied in \cite{nonli} by Hilhorst, van der Hout and Peletier. They consider the substrate $u$ with nonlinear diffusion modelled with a term $\Delta\phi(u)$, where $\phi(u)=\int_0^uD(s){\rm d}s$ and $D$ is the diffusivity of the medium. Under assumptions in \cite{nonli}, $D(s)$ may vanish at $s=0$, so the equation for $u$ need not be uniformly parabolic. Thus \cite{nonli} focuses on weak solutions since it is possible that the system studied has no classical solution. In studying of the multi-dimensional limiting free boundary problems in \cite{nonli}, the free boundary $\Gamma(t)$ of the limit problem is assumed as a smooth surface that lies entirely within the bounded domain and varies smoothly with $t$.

We treat two pairs of problem with nonlinear diffusion terms on the spatial domains $\mathbb{R^+}$ and $\mathbb{R}$. The first pair of problems defined on the half-strip $S_{T}:=\left\{(x,t):0<x<\infty,0<t<T\right\}$, one with $\varepsilon>0$ and the other with $\varepsilon=0$, are
\begin{align}
\left\{
\begin{array}{llll}
  u_{t}=\phi(u)_{xx}-kuv,\quad &(x,t)\in(0,\infty)\times(0,T),\\
  v_{t}=\varepsilon\phi(v)_{xx}-kuv,\quad &(x,t)\in(0,\infty)\times(0,T),\\
  u(0,t)=U_{0},\quad \varepsilon \phi(v)_{x}(0,t)=0,\quad &\mbox{for}\quad t\in(0,T),\\
  u(x,0)=u^k_0(x),\quad v(x,0)=v^k_{0}(x),\quad &\mbox{for}\quad x\in\mathbb{R}^{+}.
\label{a}
\end{array}
\right.
\end{align}
 As in \cite{1996} $kuv$ is the contribution of a chemical reaction where $k$ determines the reaction rate. We define, as in \cite{selfsim}, the initial data for the limiting self-similar solutions as
\begin{align*}
u_0^\infty=\left\{\begin{array}{llll}
&U_0\quad &x=0,\\ &0\quad &x>0,\end{array}\right.\quad
v_0^\infty=\left\{\begin{array}{llll}
&0\quad &x=0,\\ &V_0\quad &x>0,\end{array}\right.
\end{align*}
which equal constant initial conditions on the half-line in \cite{1996}, where $U_0$ and $V_0$ are positive constants, and choose the initial data $u_0^k,v_0^k$ that satisfy
\begin{itemize}
\item[{\rm (i)}]$u_0^k,v_0^k\in C^2(\mathbb{R^+})$;
\item[{\rm (ii)}]$0\le u_0^k\le U_0,\ 0 \le v_0^k\le V_0$;
\item[{\rm (iii)}]$u_0^k\rightarrow u_0^\infty,\ v_0^k\rightarrow v_0^\infty$ in $L^1(\mathbb{R^+})$ as $k\rightarrow\infty$.
\item[{\rm (iv)}]For each $r>0$, there exists a continuous function $\omega_r:\mathbb{R}^+\mapsto\mathbb{R}^+$ with $\omega_r(\mu)\rightarrow 0$ as $\mu\rightarrow 0$ and
\begin{align*}
\Vert u^k_0(\cdot+\delta)-u_0^k(\cdot)\Vert_{L^1((r,\infty))}+\Vert v^k_0(\cdot+\delta)-v_0^k(\cdot)\Vert_{L^1((r,\infty))}\le\omega_r(\delta),
\end{align*}
for all $k>0$, $\delta<\frac{r}{4}$.
\end{itemize}

For both $\varepsilon=0$ and $\varepsilon>0$, we will prove the existence and uniqueness of weak solutions $(u^k,v^k)$ of problem (\ref{a}) for every $k>0$, and study the asymptotic behaviour of $(u^k,v^k)$ as $k\rightarrow\infty$. As we will see, the limits $u$ of $u^k$ and $v$ of $v^k$ are separated by a free boundary and given by the positive and negative parts respectively of a function $w$, where $w$ satisfies the limit problem (\ref{c}) and
\begin{align*}
u=w^+ \ {\rm and}\ v=-w^-,
\end{align*}
where $s^+=\max\{0,s\}$ and $s^-=\min\{0,s\}$. The $k\rightarrow\infty$ limit problem (\ref{c}) is a scalar problem, where the nonlinear diffusion function (\ref{c0}) depends on whether $w$ is positive or negative. We prove that there exists a unique weak solution of the limit problem (\ref{c}) in Theorem \ref{tu3}.

The second pair of problems is defined on the strip $Q_{T}:=\left\{(x,t):x\in\mathbb{R}, 0<t<T\right\}$, one with $\varepsilon>0$ and the other one with $\varepsilon=0$ are
\begin{align}
\left\{
\begin{array}{llll}
  u_{t}=\phi(u)_{xx}-kuv,\quad &(x,t)\in\mathbb{R}\times(0,T),\\
  v_{t}=\varepsilon\phi(v)_{xx}-kuv,\quad &(x,t)\in\mathbb{R}\times(0,T),\\
  u(x,0)=u_0^k(x),\quad v(x,0)=v_0^k(x),\quad &\mbox{for}\quad x\in\mathbb{R},
\label{a2}
\end{array}
\right.
\end{align}
where we define, as in \cite{selfsim} that
\begin{align*}
u_0^\infty=\left\{\begin{array}{llll}
&U_0\quad &x<0,\\ &0\quad &x>0,\end{array}\right.\quad
v_0^\infty=\left\{\begin{array}{llll}
&0\quad &x<0,\\ &V_0\quad &x>0,\end{array}\right.
\end{align*}
with $U_0, V_0$ positive constants, $k$ as in (\ref{a}) and initial data $u_0^k,v_0^k$ satisfy
\begin{itemize}
\item[{\rm (i)}]$u_0^k,v_0^k\in C^2(\mathbb{R})$;
\item[{\rm (ii)}]$0\le u_0^k\le U_0,\ 0 \le v_0^k\le V_0$;
\item[{\rm (iii)}]$u_0^k\rightarrow u_0^\infty,\ v_0^k\rightarrow v_0^\infty$ in $L^1(\mathbb{R})$ as $k\rightarrow\infty$.
\item[{\rm (iv)}]There exists a continuous function $\omega:\mathbb{R}^+\mapsto\mathbb{R}^+$ with $\omega(\mu)\rightarrow 0$ as $\mu\rightarrow 0$ and
\begin{align*}
\Vert u^k_0(\cdot+\delta)-u_0^k(\cdot)\Vert_{L^1(\mathbb{R})}+\Vert v^k_0(\cdot+\delta)-v_0^k(\cdot)\Vert_{L^1(\mathbb{R})}\le\omega(\delta),
\end{align*}
for all $k>0$, $\delta\in\mathbb{R}$.
\end{itemize}

Note that for simplicity, we use the same notation $u^\infty_0,v^\infty_0$ for both half-line and whole line initial functions. We again consider both the case of two mobile reactants where $\varepsilon>0$, and the case of one mobile and one immobile reactant, when $\varepsilon=0$. Similarly to the half-line case, we prove the existence and uniqueness of weak solutions $(u^k,v^k)$ of problem (\ref{a2}), and study the convergence to self-similar limit profiles $(u,v)$ as $k\rightarrow\infty$, where $u$ and $v$ are given by a function $w$, the unique weak solution of the limit problem (\ref{c2}).

The work of this paper continues and extends earlier studies of fast-reaction limits \cite{selfsim}\cite{1996}\cite{nonli}, by introducing the nonlinear function $\phi$, in both the case of two mobile reactants $(\varepsilon>0)$ in addition to that of one mobile reactant $(\varepsilon=0)$ and in considering the whole-line problem (\ref{a}) in addition to the half-line problem (\ref{a2}). In \cite{nonli}, the existence of weak solutions is proved by looking at a sequence of uniformly parabolic problems in which $\phi'_n(u)\ge\frac{1}{n}$ and studying the solutions in the limit as $n\rightarrow\infty$. We exploit some ideas and an iterative method from \cite{nonli}, but our domains are unbounded and when $\varepsilon>0$, the equations for both $u$ and $v$ of (\ref{a}) and (\ref{a2}) have nonlinear diffusion and are not uniformly parabolic. In the problems treated in \cite{selfsim}, where the diffusion is linear and the problems are studied in unbounded domains, the overall strategy and a series of cut-off functions are useful in studying the $k\rightarrow\infty$ limit. Here, we consider $\phi(u^k),\phi(v^k)$ rather than $u^k,v^k$ and in order to deal with the nonlinear diffusion, alternative methods and additional procedures are needed, for example, in proving the estimates of the differences of time translate, there will be an extra term because of the nonlinear diffusion.

This paper is organised as follows. In Section 2, we study the half-line problem (\ref{a}), starting with the uniqueness of weak solutions for (\ref{a}). Under the assumptions on $\phi$ in (\ref{phi}), the equations for $u,v$ need not be uniformly parabolic when $\varepsilon>0$, so the existence of weak solution for (\ref{a}) are proved in Theorem \ref{er} by an iterative method. Section 3 is concerned with passing to the limit as $k\rightarrow\infty$ of the weak solutions $(u^k,v^k)$, via some \textit{a priori} estimates and a key bound on $ku^kv^k$ in $L^1(S_T)$, independent of $k$ and $\varepsilon\ge0$ which is proved in Theorem \ref{tu}. Section 5 contains the whole-line counterparts of the study of the half-line problem in Sections 2-3.

The unique weak solutions of the limit problems (\ref{c}) and (\ref{c2}) can in fact be shown to be self-similar solutions, as was established in \cite{selfsim} for the case of linear diffusion. Note that in contrast, to the case of linear diffusion \cite{selfsim}, we know of no explicit self-similar solutions for the $k\rightarrow\infty$ limit
problems with nonlinear diffusion that are obtained here. We will present the study of the self-similar solutions of the limit problem elsewhere \cite{selfsimnon}. It can be shown that the limit function $w$ of (\ref{c}) satisfies one of two self-similar problems, depending on whether $\varepsilon>0$ or $\varepsilon=0$. The function $f:\mathbb{R}^+\rightarrow \mathbb{R}$ describes a self-similar limit solution such that $w(x,t)=f(\eta)$ where $\eta=x/\sqrt{t}$ for $(x,t)\in S_T$. The existence of self-similar solutions of the limit problems is proved by using one or two parameter shooting methods.

\section{Half-line case: existence and uniqueness of weak solutions for $\varepsilon > 0$}
Let $\varepsilon>0$. We consider first an approximate problem to (\ref{a}). Given $R>1$, consider the problem
\begin{align}
\left\{
\begin{array}{llll}
  u_{t}=\phi(u)_{xx}-kuv,\quad &{\rm in}\ (0,R)\times(0,T),\\
  v_{t}=\varepsilon\phi(v)_{xx}-kuv,\quad &{\rm in}\ (0,R)\times(0,T),\\
  u(0,t)=U_{0},\quad  \phi(v)_{x}(0,t)=0,\quad &\mbox{for}\quad t\in(0,T),\\
  \phi(u)_x(R,t)=0,\quad  \phi(v)_{x}(R,t)=0,\quad &\mbox{for}\quad t\in(0,T),\\
  u(x,0)=u_{0,R},\quad v(x,0)=v_{0,R},\quad &\mbox{for}\quad x\in(0,R),
\label{f}
\end{array}
\right.
\end{align}
where $u_{0,R},v_{0,R}\in C^2(\mathbb{R}^+)$ are such that $0\le u_{0,R}\le U_0$, $0\le v_{0,R}\le V_0$ and
\begin{align}
u_{0,R}=u_0^k\beta^R,\qquad v_{0,R}=-(V_0-v_0^k)\beta^R+V_0,\label{ic1}
\end{align}
where the family of cut-off functions $\beta^R\in C^\infty(\mathbb{R}^+)$ with $R>1$ are defined as
\begin{align*}
\beta^{R}=\left\{\begin{aligned}
&1\quad &x\le R-1,\\
  &\beta^{1}(x+2-R)\quad &x\ge R-1.\end{aligned}\right.
\end{align*}
with $\beta^1\in C^{\infty}\left(\mathbb{R}^{+}\right)$ is a non-negative cut-off function such that $0\le\beta^1(x)\le1$ for all $x\in\mathbb{R}^+$, $\beta^1(x)=1$ when $x\le1$ and $\beta^1(x)=0$ when $x\ge2$.

Since $\phi'(s)$ may vanish at $s=0$, the equations for $u$ and $v$ as $\varepsilon>0$ in problem (\ref{f}) need not be uniformly parabolic and it is possible that there is no classical solution. Thus we are led to introduce a notion of a weak solution.

Now define
\begin{align}
\Omega_R:=\left\{\alpha\in W^{1,2}(0,R)|\ \alpha=0 \ {\rm at}\ x=0\right\},
\end{align}
and let $\hat u\in C^\infty(\mathbb {R}^+)$ be a smooth function that $\hat u=U_0$ when $x=0$ and $\hat u=0$ when $x>1$.
\begin{definition}
A pair $(u_R,v_R)\in L^\infty\left((0,R)\times(0,T)\right)\times L^\infty\left((0,R)\times(0,T)\right)$ is called a weak solution of (\ref{f}) if
{\rm(i)} $\phi(u_R)\in\phi(\hat u)+L^2(0,T;\Omega_R)$,\ $\phi(v_R)\in L^2(0,T;W^{1,2}(0,R))$;\\
{\rm(ii)} $(u_R,v_R)$ satisfies
\begin{align*}
\int_0^Ru_{0,R}\xi(x,0){\rm d}x+\int_0^T\int_0^Ru_R\xi_t{\rm d}x{\rm d}t=\int_0^T\int_0^R\phi(u_R)_x\xi_x{\rm d}x{\rm d}t+k\int_0^T\int_0^R\xi u_Rv_R{\rm d}x{\rm d}t,\\
\int_0^Rv_{0,R}\xi(x,0){\rm d}x+\int_0^T\int_0^Rv_R\xi_t{\rm d}x{\rm d}t=\int_0^T\int_0^R\varepsilon\phi(v_R)_x\xi_x{\rm d}x{\rm d}t+k\int_0^T\int_0^R\xi u_Rv_R{\rm d}x{\rm d}t,
\end{align*}
where $\xi\in \mathcal{F}_T^R:=\left\{\xi\in C^{1}\left([0,R]\times[0,T]\right)|\ \xi(0,t)=\xi(\cdot,T)=0\ {\rm for}\ t\in(0,T)\right\}$.
\end{definition}

We use the following comparison theorem for (\ref{f}) to prove the uniqueness of the weak solution of (\ref{f}). The proof is similar to that of \cite[Lemma 3.2]{selfsim}. We sketch the points here, focusing on the parts where our problem needs a slightly different argument.
\begin{lemma}\label{lur}
Suppose that $\varepsilon\ge0$ and $(\overline{u_R},\overline{v_R})$, $(\underline{u_R},\underline{v_R})$ be such that
\begin{itemize}\item[{\rm (a)}] $\overline{u_R},\underline{u_R}\in L^\infty((0,R)\times(0,T)]);$
\item[{\rm (b)}]$\phi(\overline{u_R})\in\phi(\overline{u_R}(0,\cdot))+L^2(0,T;\Omega_R)$,\ $\phi(\underline{u_R})\in\phi(\underline{u_R}(0,\cdot))+L^2(0,T;\Omega_R);$
\item[{\rm (c)}]$\overline{u_R}_t,\underline{u_R}_t,\phi(\overline{u_R})_{xx},\phi(\underline{u_R})_{xx}\in L^1((0,R)\times(0,T));$
\item[{\rm (d)}]$\overline{v_R},\underline{v_R}\in L^\infty((0,R)\times(0,T));$
\item[{\rm (e)}]If $\varepsilon>0$, $\phi(\underline{v_R}),\phi(\underline{v_R})\in L^2(0,T;W^{1,2}(0,R))$,\
 $\overline{v_R}_t,\underline{v_R}_t,\phi(\overline{v_R})_{xx},\phi(\underline{v_R})_{xx}\in L^1((0,R)\times(0,T)).$
 \end{itemize}
$(\overline{u_R},\overline{v_R})$, $(\underline{u_R},\underline{v_R})$ satisfy
\begin{align*}
&\overline{u_R}_{t}\ge\phi(\overline{u_R})_{xx}-k\overline{u_Rv_R},\quad\underline{u_R}_{t}\le\phi(\underline {u_R})_{xx}-k\underline{u_R}\underline{v_R},&&{\rm in}\ (0,R)\times(0,T),\\
&\overline{v_R}_{t}\le\varepsilon\phi(\overline{v_R})_{xx}-k\overline{u_Rv_R},\quad\underline{v_R}_{t}\ge\varepsilon\phi(\underline{v_R})_{xx}-k\underline{u_R}\underline{v_R},\ &&{\rm in}\ (0,R)\times(0,T),\\
&\overline{u_R}(0,\cdot)\ge\underline{u_R}(0,\cdot),\quad\phi(\overline{v_R})_x(0,\cdot)\le\phi(\underline{v_R})_x(0,\cdot),\ &&\mbox{on} \ (0,T),\\
&\phi(\overline{u_R})_x(R,\cdot)\ge\phi(\underline{u_R})_x(R,\cdot),\quad\phi(\overline{v_R})_x(R,\cdot)\le\phi(\underline{v_R})_x(R,\cdot),&&\mbox{on} \ (0,T),\\
&\overline{u_R}(\cdot,0)\ge\underline{u_R}(\cdot,0),\quad \overline{v_R}(0,\cdot)\le\underline{v_R}(0,\cdot), &&\mbox{on} \ (0,R).
\end{align*}
Then
\begin{align*}
\overline{u_R}\ge\underline{u_R},\quad\overline{v_R}\le\underline{v_R}\quad\mbox{in} \  (0,R)\times(0,T).
\end{align*}
\end{lemma}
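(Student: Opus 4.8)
The plan is to adapt the $L^1$-contraction argument of \cite[Lemma 3.2]{selfsim} to the present degenerate setting, the essential new feature being the treatment of the porous-medium diffusion terms through a Kato-type inequality. Set $U:=\underline{u_R}-\overline{u_R}$ and $V:=\overline{v_R}-\underline{v_R}$; the goal is to show that $U^+\equiv 0$ and $V^+\equiv 0$ on $(0,R)\times(0,T)$, which is exactly the asserted ordering. Subtracting the differential inequalities satisfied by the super- and subsolutions yields, in $L^1((0,R)\times(0,T))$,
\begin{align*}
U_t&\le\big(\phi(\underline{u_R})-\phi(\overline{u_R})\big)_{xx}-kD,\\
V_t&\le\varepsilon\big(\phi(\overline{v_R})-\phi(\underline{v_R})\big)_{xx}+kD,
\end{align*}
where $D:=\underline{u_R}\,\underline{v_R}-\overline{u_R}\,\overline{v_R}$; when $\varepsilon=0$ the second diffusion term simply drops out and the argument below is only simpler.

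First I would test these inequalities with a smooth, nondecreasing approximation $p_\delta$ of the positive-sign function, with $p_\delta=0$ on $(-\infty,0]$, $0\le p_\delta\le 1$, $p_\delta'\ge0$, and $p_\delta\to\mathrm{sgn}^+$ pointwise as $\delta\to0$. Since $\phi$ is strictly increasing, $\mathrm{sgn}^+(\phi(\underline{u_R})-\phi(\overline{u_R}))=\mathrm{sgn}^+(U)$, so I apply $p_\delta(\phi(\underline{u_R})-\phi(\overline{u_R}))$ to the first inequality and $p_\delta(\phi(\overline{v_R})-\phi(\underline{v_R}))$ to the second, integrate over $(0,R)$, and integrate the diffusion terms by parts. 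Writing $z:=\phi(\underline{u_R})-\phi(\overline{u_R})$, the interior diffusion contribution becomes $-\int_0^R p_\delta'(z)\,z_x^2\,dx\le0$, and similarly for the $v$-term; this nonpositivity, valid because $\phi$ is monotone, is the mechanism that replaces the linear-diffusion estimate of \cite{selfsim}. The boundary terms must then be shown to be harmless: at $x=0$ the hypotheses $\overline{u_R}(0,\cdot)\ge\underline{u_R}(0,\cdot)$ and $\overline{v_R}\le\underline{v_R}$ force the arguments of $p_\delta$ to be nonpositive there, so those terms vanish as $\delta\to0$, while at $x=R$ the flux inequalities $\phi(\overline{u_R})_x(R,\cdot)\ge\phi(\underline{u_R})_x(R,\cdot)$ and $\phi(\overline{v_R})_x(R,\cdot)\le\phi(\underline{v_R})_x(R,\cdot)$ render the remaining boundary contributions nonpositive.

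Passing to the limit $\delta\to0$ therefore gives
\begin{align*}
\frac{d}{dt}\int_0^R U^+\,dx\le -k\int_0^R \mathrm{sgn}^+(U)\,D\,dx,\qquad
\frac{d}{dt}\int_0^R V^+\,dx\le k\int_0^R \mathrm{sgn}^+(V)\,D\,dx.
\end{align*}
The final step is to absorb the reaction coupling. Using the decomposition $D=\underline{v_R}\,U-\overline{u_R}\,V$ and adding the two inequalities, the net reaction term is $k\int_0^R(\mathrm{sgn}^+(V)-\mathrm{sgn}^+(U))(\underline{v_R}\,U-\overline{u_R}\,V)\,dx$. Expanding the product and invoking $\mathrm{sgn}^+(U)U=U^+$, $\mathrm{sgn}^+(V)V=V^+$ together with the nonnegativity of $\underline{v_R}$ and $\overline{u_R}$, the two sign-indefinite contributions are bounded above by $k(\|\underline{v_R}\|_\infty U^++\|\overline{u_R}\|_\infty V^+)$ while the other two are nonpositive. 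Hence
\begin{align*}
\frac{d}{dt}\int_0^R(U^++V^+)\,dx\le C\int_0^R(U^++V^+)\,dx,
\end{align*}
with $C$ depending on $k$ and the $L^\infty$-bounds, and since $U^+(\cdot,0)=V^+(\cdot,0)=0$ by the initial hypotheses, Gronwall's inequality forces $U^+\equiv V^+\equiv0$, i.e. $\overline{u_R}\ge\underline{u_R}$ and $\overline{v_R}\le\underline{v_R}$.

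The main obstacle I anticipate is the rigorous justification of the Kato-type limit and the boundary analysis: because $\phi'$ may vanish at $0$ the equations are only degenerate parabolic, so the integrations by parts and the $\delta\to0$ passage have to be carried out using only the $L^1$-regularity of $\overline{u_R}_t,\underline{u_R}_t,\phi(\cdot)_{xx}$ (and, for $\varepsilon>0$, their $v$-counterparts) granted by hypotheses (c) and (e), with the $x=R$ flux terms controlled solely through the prescribed boundary inequalities. Everything else—the sign bookkeeping for the reaction term via $D=\underline{v_R}\,U-\overline{u_R}\,V$ and the concluding Gronwall step—is routine once this degenerate-diffusion estimate is in place.
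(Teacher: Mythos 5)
Your argument follows the same core route as the paper's proof: both regularize $\mathrm{sgn}^+$ (the paper via $(m^+_\alpha)'$ with $m^+_\alpha(r)=\alpha m^+(r/\alpha)$, you via $p_\delta$), both compose it with the differences $\phi(\underline{u_R})-\phi(\overline{u_R})$ and $\phi(\overline{v_R})-\phi(\underline{v_R})$ so that the degenerate diffusion contributes the nonpositive Kato term $-\int p_\delta'(z)z_x^2$, and both use the strict monotonicity of $\phi$ to identify $\mathrm{sgn}^+(\phi(a)-\phi(b))$ with $\mathrm{sgn}^+(a-b)$ before invoking the chain-rule lemma to recover $\frac{d}{dt}\int(U^++V^+)$. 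Where you genuinely diverge is the reaction term: the paper observes that $\bigl[\mathrm{sgn}^+(U)-\mathrm{sgn}^+(V)\bigr]\bigl(\underline{u_R}\,\underline{v_R}-\overline{u_R}\,\overline{v_R}\bigr)\ge 0$ pointwise, so $\int(U^++V^+)$ is simply nonincreasing and vanishes identically since it starts at zero, with no Gronwall constant; you instead decompose $D=\underline{v_R}U-\overline{u_R}V$, discard the two nonpositive pieces, bound the two cross terms by $C(U^++V^+)$, and close with Gronwall. Both are valid and both rely on the same implicit nonnegativity of $\overline{u_R}$ and $\underline{v_R}$ (guaranteed in every application of the lemma); the paper's pointwise sign argument is marginally sharper, while your Gronwall route is more robust, e.g.\ it would survive a perturbation that destroys the exact sign cancellation. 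One caution on your boundary discussion: at $x=0$ the $v$-comparison uses the flux hypothesis, not an ordering of $\overline{v_R}$ and $\underline{v_R}$ there, and with the inequality as literally stated the resulting boundary term has the unfavourable sign; this is harmless in every use of the lemma (where $\phi(v)_x(0,\cdot)=0$ for both functions, so the term vanishes), but you should not claim the arguments of $p_\delta$ are forced nonpositive at $x=0$ for the $v$-component.
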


\begin{proof}Take a smooth non-decreasing convex function $m^{+}:\mathbb{R}\rightarrow\mathbb{R}$ with
\begin{align*}
m^{+}\ge0,\ m^{+}(0)=0,\ \left(m^{+}\right)'(0)=0,\ m^{+}(r)\equiv0\ \mbox{for} \ r\le0,\ m^{+}(r)=r-\frac{1}{2},
\end{align*}
for $r>1$.
For $\alpha>0$, we define the functions
\begin{align*}
m^+_{\alpha}(r):=\alpha m^{+}\left(\frac{r}{\alpha}\right),
\end{align*}
which approximate the positive part of $r$ as $\alpha\rightarrow 0$ and $(m_\alpha^+)'(r)\rightarrow {\rm sgn}^+(r)$ as $\alpha\rightarrow 0$. Let $w=\phi(\underline{u_R})-\phi(\overline{u_R})$ and $z=\phi(\overline{v_R})-\phi(\underline{v_R})$.
 Multiplying equation for $u_R$ by $\left(m_{\alpha}^{+}\right)'(w)$ and equation for $v_R$ by $\left(m_{\alpha}^{+}\right)'(z)$. It follows that adding these two equations together that
\begin{align*}
&\int_{0}^{t_0}\int_0^R\left(m_{\alpha}^{+}\right)'(w)(\underline{u_R}-\overline{u_R})_{t}+\left(m_{\alpha}^{+}\right)'(z)(\overline{v_R}-\underline{v_R})_{t}{\rm d}x{\rm d}t\\
\le&-k\int_{0}^{t_0}\int_0^R\left[\left(m_{\alpha}^{+}\right)'(w)-\left(m_{\alpha}^{+}\right)'(z)\right](\underline{u_Rv_R}-\overline{u_Rv_R}){\rm d}x{\rm d}t.
\end{align*}
With the nonlinear function $\phi$, we need to deal with $(m_{\alpha}^{+})'(w)$ and $(m_{\alpha}^{+})'(z)$ to simplify the left hand side.

Now letting $\alpha\rightarrow0$ gives
\begin{align*}
\lim_{\alpha\rightarrow0}\left(m_{\alpha}^+\right)'(w)=\lim_{\alpha\rightarrow0}\left(m_{\alpha}^+\right)'(\phi(\underline{u_R})-\phi(\overline{u_R}))\rightarrow \mbox{sgn}^+(\phi(\underline{u_R})-\phi(\overline{u_R})),
\end{align*}
where $s^+=\max\left\{s,0\right\}$. Note that $\mbox{sgn}^+\left[\phi(\underline{u_R})-\phi(\overline{u_R})\right]=\mbox{sgn}^+(\underline{u_R}-\overline{u_R})$, since $\phi$ is increasing.
By \cite[Lemma 7.6]{epde}, we obtain
\begin{align*}
&\int_{0}^{t_0}\int_0^R\left[\mbox{sgn}^+(\underline{u_R}-\overline{u_R})(\underline{u_R}-\overline{u_R})_t+\mbox{sgn}^+(\overline{v_R}-
\underline{v_R})(\overline{v_R}-\underline{v_R})_t\right]{\rm d}x{\rm d}t\\
=&\int_0^R\left[(\underline{u_R}-\overline{u_R})^++(\overline{v_R}-\underline{v_R})^+\right](x,t_0){\rm d}x-\int_0^R\left[(\underline{u_R}-\overline{u_R})^++(\overline{v_R}-\underline{v_R})^+\right](x,0){\rm d}x\\
\le&-k\int_{0}^{t_0}\int_0^R\left[(\mbox{sgn}w)^+-(\mbox{sgn}z)^+\right](\underline{u_Rv_R}-\overline{u_Rv_R}){\rm d}x{\rm d}t,
\end{align*}
and the expression
\begin{align*}
\left[(\mbox{sgn}w)^+-(\mbox{sgn}z)^+\right](\underline{u_Rv_R}-\overline{u_Rv_R})\ge0.
\end{align*}
Thus
\begin{align}
&\int_0^R\left[(\underline{u_R}-\overline{u_R})^++(\overline{v_R}-\underline{v_R})^+\right](x,t_0){\rm d}x\non\\
\le&-k\int_{0}^{t_0}\int_0^R\left[(\mbox{sgn}w)^+-(\mbox{sgn}z)^+\right](\underline{u_Rv_R}-\overline{u_Rv_R}){\rm d}x{\rm d}t\le0\label{ur5}.    
\end{align}
Hence
\begin{align*}
\left[(\underline{u_R}-\overline{u_R})^++(\overline{v_R}-\underline{v_R})^+\right](\cdot,t_0){\rm d}x=0\quad{\rm on}\ (0,R).
\end{align*}
\end{proof}

The following corollary is immediate from Lemma \ref{lur}.
\begin{corollary}\label{cu1}
Let $\varepsilon>0$. For given initial data $u_{0,R},v_{0,R}$, there is at most one solution $(u_R,v_R)$ of (\ref{f}).
\end{corollary}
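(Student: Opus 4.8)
The plan is to obtain uniqueness as a symmetric, two-sided application of the comparison Lemma~\ref{lur}. Suppose $(u_R^1,v_R^1)$ and $(u_R^2,v_R^2)$ are both weak solutions of (\ref{f}) with the common initial data $u_{0,R},v_{0,R}$ and $\varepsilon>0$. Because each pair solves the system exactly, every differential relation appearing in the hypotheses of Lemma~\ref{lur} holds with equality for it; hence either solution is simultaneously admissible as the pair $(\overline{u_R},\overline{v_R})$ and as the pair $(\underline{u_R},\underline{v_R})$.

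First I would apply Lemma~\ref{lur} with $(\overline{u_R},\overline{v_R})=(u_R^1,v_R^1)$ and $(\underline{u_R},\underline{v_R})=(u_R^2,v_R^2)$. The boundary and initial data of the two solutions coincide: $u_R^1(0,\cdot)=u_R^2(0,\cdot)=U_0$, $\phi(v_R^i)_x(0,\cdot)=0$, $\phi(u_R^i)_x(R,\cdot)=\phi(v_R^i)_x(R,\cdot)=0$ for $i=1,2$, and $u_R^i(\cdot,0)=u_{0,R}$, $v_R^i(\cdot,0)=v_{0,R}$. Consequently the ordering hypotheses $\overline{u_R}(0,\cdot)\ge\underline{u_R}(0,\cdot)$, $\phi(\overline{v_R})_x(0,\cdot)\le\phi(\underline{v_R})_x(0,\cdot)$, the two conditions at $x=R$, and the initial inequalities all hold trivially as equalities. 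Lemma~\ref{lur} then gives $u_R^1\ge u_R^2$ and $v_R^1\le v_R^2$ on $(0,R)\times(0,T)$.

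Next I would interchange the two solutions, setting $(\overline{u_R},\overline{v_R})=(u_R^2,v_R^2)$ and $(\underline{u_R},\underline{v_R})=(u_R^1,v_R^1)$. The verification of the ordering hypotheses is identical, and Lemma~\ref{lur} now yields the opposite inequalities $u_R^2\ge u_R^1$ and $v_R^2\le v_R^1$. Combining the two applications forces $u_R^1=u_R^2$ and $v_R^1=v_R^2$ a.e.\ in $(0,R)\times(0,T)$, which is exactly the asserted uniqueness.

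The genuinely delicate point---and the step I expect to be the main obstacle---is confirming that an arbitrary weak solution of (\ref{f}) meets the regularity assumptions (a)--(e) of Lemma~\ref{lur}, notably the integrability $\pa_t u_R,\phi(u_R)_{xx}\in L^1((0,R)\times(0,T))$ and, since $\varepsilon>0$, $\pa_t v_R,\phi(v_R)_{xx}\in L^1((0,R)\times(0,T))$; these are stronger than the membership conditions $\phi(u_R)\in\phi(\hat u)+L^2(0,T;\Omega_R)$ and $\phi(v_R)\in L^2(0,T;W^{1,2}(0,R))$ that are built into the definition of weak solution. The additional regularity should be supplied by the parabolic smoothing established in the existence argument (Theorem~\ref{er}); once it is available for both solutions, all hypotheses of Lemma~\ref{lur} are in force and the two-sided comparison closes the proof.
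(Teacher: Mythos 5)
Your argument is exactly the paper's: the corollary is stated as immediate from Lemma~\ref{lur}, and the intended proof is precisely the two-sided application you describe, taking each solution in turn as the super- and subsolution pair with identical data. Your remark that one must still check the regularity hypotheses (a)--(e) of Lemma~\ref{lur} for arbitrary weak solutions is a fair observation about a point the paper leaves implicit, but it does not change the route.
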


If we take $(\underline{u_R},\underline{v_R})=(0,0)$ and $(\overline{u_R},\overline{v_R})=(u_R,v_R)$, then take $(\underline{u_R},\underline{v_R})=(u_R,v_R)$ and $(\overline{u_R},\overline{v_R})=(U_0,V_0$) in Lemma \ref{lur}, we obtain the following.
\begin{corollary}\label{b1}
Let $(u_R,v_R)$ be a weak solution of (\ref{f}). Then we have
\begin{align}
0\le u_R(x,t)\le U_0\quad {\rm and}\quad 0\le v_R(x,t)\le V_0,
\end{align}
for $(x,t)\in(0,R)\times(0,T)$.
\end{corollary}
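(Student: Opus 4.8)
The plan is to apply the comparison principle of Lemma~\ref{lur} twice, each time comparing the weak solution $(u_R,v_R)$ against a constant pair. The solution itself qualifies as both an ordered super- and a sub-solution in the sense of the lemma, since it satisfies the two equations of (\ref{f}) with equality and has the regularity needed for hypotheses {\rm(a)}--{\rm(e)}. Constants trivially satisfy {\rm(a)}--{\rm(e)} on the bounded strip $(0,R)\times(0,T)$, so the only real work is selecting the right constants and checking the ordering of the boundary and initial data.

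The key structural observation is that the ordering in Lemma~\ref{lur} is reversed in the $v$-component (its conclusion reads $\overline{v_R}\le\underline{v_R}$), so the admissible constant barriers are the two anti-diagonal corners of $[0,U_0]\times[0,V_0]$, namely $(0,V_0)$ and $(U_0,0)$, rather than $(0,0)$ and $(U_0,V_0)$. Indeed, for a constant pair the differential inequality for $v$ reads $0\le\varepsilon\phi(\cdot)_{xx}-k\,\overline{u_R}\,\overline{v_R}=-k\,\overline{u_R}\,\overline{v_R}$, which holds only when $\overline{u_R}\,\overline{v_R}=0$; this is why the diagonal corner $(U_0,V_0)$ is \emph{not} a supersolution, whereas $(U_0,0)$ and $(0,V_0)$ are. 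The same identity forces the pairing of $u$ at one extreme with $v$ at the opposite extreme, reflecting that $v$ is depleted fastest exactly where $u$ is largest.

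For the first comparison I would take $(\overline{u_R},\overline{v_R})=(u_R,v_R)$ and $(\underline{u_R},\underline{v_R})=(0,V_0)$. Every differential inequality then holds with equality, since the constant pair $(0,V_0)$ annihilates all time, diffusion and reaction terms (using $\phi(0)=0$ and $0\cdot V_0=0$) while $(u_R,v_R)$ satisfies (\ref{f}) exactly; the flux conditions reduce to $0\le0$ because every constant has $\phi(\cdot)_x\equiv0$ and $(u_R,v_R)$ obeys the homogeneous flux conditions of (\ref{f}); and the data orderings $u_R(0,\cdot)=U_0\ge0$, $u_{0,R}\ge0$, $v_{0,R}\le V_0$ follow from (\ref{ic1}) and the imposed bounds on $u_{0,R},v_{0,R}$. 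Lemma~\ref{lur} then gives $u_R\ge0$ and $v_R\le V_0$. Symmetrically, the choice $(\overline{u_R},\overline{v_R})=(U_0,0)$, $(\underline{u_R},\underline{v_R})=(u_R,v_R)$ — for which $U_0\cdot0=0$ again makes all inequalities equalities, $u_R(0,\cdot)=U_0$, and $u_{0,R}\le U_0$, $v_{0,R}\ge0$ — yields $U_0\ge u_R$ and $0\le v_R$. Combining the two comparisons gives $0\le u_R\le U_0$ and $0\le v_R\le V_0$ on $(0,R)\times(0,T)$.

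The argument is immediate once Lemma~\ref{lur} is available, so there is no serious obstacle; the one point that must be handled correctly — and the place where a naive transcription of the scalar maximum principle would go wrong — is the choice of constant barriers, which the sign-reversed $v$-ordering forces to be the anti-diagonal corners $(0,V_0)$ and $(U_0,0)$.
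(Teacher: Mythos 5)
Your proposal is correct and follows the paper's strategy exactly: two applications of the comparison principle of Lemma~\ref{lur} against constant pairs. In fact your version is more careful than the paper's own one-line prescription, which says to take $(\underline{u_R},\underline{v_R})=(0,0)$ with $(\overline{u_R},\overline{v_R})=(u_R,v_R)$ and then $(\underline{u_R},\underline{v_R})=(u_R,v_R)$ with $(\overline{u_R},\overline{v_R})=(U_0,V_0)$. Read literally, that choice does not work: the constant pair $(U_0,V_0)$ violates the supersolution inequality for the $v$-component ($0\le -kU_0V_0$ is false), the pair $(0,0)$ violates the required initial ordering $\overline{v_R}(\cdot,0)\le\underline{v_R}(\cdot,0)$ since $v_{0,R}\ge0$, and the combined conclusions would even read $V_0\le v_R\le 0$. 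Your observation that the reversed ordering in the $v$-component forces the barriers to be the anti-diagonal corners $(0,V_0)$ and $(U_0,0)$ is precisely the correction needed, and your verification of the differential inequalities, flux conditions and data orderings from (\ref{ic1}) is complete. No gaps.
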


We will prove the existence of a weak solution of the appropriate problem (\ref{f}) using an iterative method inspired by \cite{nonli}. As the first step in the iteration we consider the problem
\begin{align}
\left\{
\begin{array}{llll}
  \left(u_R^{(1)}\right)_t=\phi\left(u_{R}^{(1)}\right)_{xx}-ku_{R}^{(1)}V_{0},\quad &(x,t)\in(0,R)\times(0,T),\\
  u_{R}^{(1)}(0,t)=U_{0},\quad \phi\left(u_{R}^{(1)}\right)_x(R,t)=0,\quad &\mbox{for}\quad t\in(0,T),\\
  u_{R}^{(1)}(x,0)=u_{0,R}(x),\quad  &\mbox{for}\quad x\in(0,R).
\label{iter1}
\end{array}
\right.
\end{align}
We will prove the existence and uniqueness of a weak solution $u_{R}^{(1)}$ in the following and then substitute $u_{R}^{(1)}$ in the problem
\begin{align}
\left\{
\begin{array}{llll}
  \left(v_{R}^{(1)}\right)_t=\varepsilon\phi(v_{R}^{(1)})_{xx}-ku_{R}^{(1)}v_{R}^{(1)},\quad &(x,t)\in(0,R)\times(0,T),\\
 \phi\left(v_{R}^{(1)}\right)_x(0,t)=\phi\left(v_{R}^{(1)}\right)_x(R,t)=0,\quad &\mbox{for}\quad t\in(0,T),\\
  v_{R}^{(1)}(x,0)=v_{0,R}(x),\quad  &\mbox{for}\quad x\in(0,R),
\label{iter2}
\end{array}
\right.
\end{align}
and obtain a unique weak solution $v_{R}^{(1)}$. Our strategy is to replace $V_0$ in Problem (\ref{iter1}) by $v_{R}^{(1)}$ and again we will have a weak solution $u_{R}^{(2)}$, and so on. In this way, we will obtain sequences $\left\{u_{R}^{(m)}\right\}$ and $\left\{v_{R}^{(m)}\right\}$. Finally letting $m$ tend to infinity, we will obtain a solution of Problem (\ref{f}) in the limit.

In order to be able to carry out this procedure, we first introduce a notion of weak solutions for problems of the following type:
\begin{align}
\left\{
\begin{array}{llll}
  u_{Rt}=\phi(u_R)_{xx}-ku_Rp,\quad &(x,t)\in(0,R)\times(0,T),\\
  u_R(0,t)=U_{0},\quad \phi(u_R)_x(R,t)=0,\quad &\mbox{for}\quad t\in(0,T),\\
  u_R(x,0)=u_{0,R}(x),\quad  &\mbox{for}\quad x\in(0,R),
\label{i}
\end{array}
\right.
\end{align}
and
\begin{align}
\left\{
\begin{array}{llll}
  v_{Rt}=\varepsilon\phi(v_R)_{xx}-kv_Rq,\quad &(x,t)\in(0,R)\times(0,T),\\
  \phi(v_R)_x(0,t)=\phi(v_R)_x(R,t)=0,\quad &\mbox{for}\quad t\in(0,T),\\
  v_R(x,0)=v_{0,R}(x),\quad  &\mbox{for}\quad x\in(0,R),
\label{i2}
\end{array}
\right.
\end{align}
where $0\le p\le V_0$ and $0\le q\le U_0$ almost everywhere in $(0,R)\times(0,T)$.

\begin{definition}
(I). A function $u_R\in L^\infty((0,R)\times(0,T))$ is called a weak solution of problem (\ref{i}) if\\
{\rm (i)} $\phi(u_R)\in\phi(\hat u)+L^2(0,T;\Omega_R);$\\
{\rm (ii)} $u_R$ satisfies
\begin{align*}
\int_0^Ru_{0,R}\xi(x,0){\rm d}x+\int_0^T\int_0^Ru_R\xi_t{\rm d}x{\rm d}t=\int_0^T\int_0^R\phi(u_R)_x\xi_x{\rm d}x{\rm d}t+k\int_0^T\int_0^R\xi  u_Rp{\rm d}x{\rm d}t,
\end{align*}
where $\xi\in \mathcal{F}_T^R$.

(II). A function $v_R\in L^\infty((0,R)\times(0,T))$ is called a weak solution of problem (\ref{i2}) if\\
{\rm (i)} $\phi(v_R)\in L^2(0,T;W^{1,2}(0,R));$\\
{\rm (ii)} $v_R$ satisfies
 \begin{align*}
 \int_0^Rv_{0,R}\xi(x,0){\rm d}x+\int_0^T\int_0^Rv_R\xi_t{\rm d}x{\rm d}t=\int_0^T\int_0^R\varepsilon\phi(v_R)_x\xi_x{\rm d}x{\rm d}t+k\int_0^T\int_0^R\xi v_Rq{\rm d}x{\rm d}t
 \end{align*}
 where $\xi\in \mathcal{F}_T^R$.
\end{definition}

Next we will quote the following lemma which is proved in the Appendix in \cite{nonli}. We will use it to prove the existence of weak solutions of (\ref{i}) and (\ref{i2}).
\begin{lemma}\label{lchi}
Let $\left\{u_{R}^{(n)}\right\}\subset L^\infty((0,R)\times(0,T))$ and $\left\{\phi_n\right\}\subset C(\mathbb{R})$ be sequences with properties
\begin{align*}
&u_{R}^{(n)}\rightharpoonup u_R\quad {\rm in}\ L^2((0,R)\times(0,T)),\\
&\phi_n\quad {\rm is}\ {\rm nondecreasing},\\
&\phi_n\rightarrow\phi\quad {\rm uniformly \ on \ compact \ subset \ of}\ \mathbb{R},\\
&\phi_n(u_{R}^{(n)})\rightarrow\chi\quad {\rm in}\ L^2((0,R)\times(0,T)),
\end{align*}
then $\chi=\phi(u_R)$.
\end{lemma}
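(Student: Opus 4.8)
The plan is to identify the limit via the Minty monotonicity device, using the monotonicity of each $\phi_n$ together with the crucial combination of the \emph{strong} $L^2$-convergence of $\phi_n(u_R^{(n)})$ and the \emph{weak} $L^2$-convergence of $u_R^{(n)}$. Throughout I will use that the sequences arising in the applications are uniformly bounded (by Corollary \ref{b1}), so that $u_R\in L^\infty$ and, for bounded comparison functions, the uniform convergence $\phi_n\to\phi$ is only ever invoked over a fixed compact interval containing all relevant values.

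First I would record the pointwise inequality coming from the fact that $\phi_n$ is nondecreasing: for any fixed $g\in L^\infty((0,R)\times(0,T))$,
\[
\left(\phi_n(u_R^{(n)})-\phi_n(g)\right)\left(u_R^{(n)}-g\right)\ge0\quad\mbox{a.e.,}
\]
so that, after integrating and rearranging,
\[
\int_0^T\!\!\int_0^R\phi_n(u_R^{(n)})\,u_R^{(n)}\,{\rm d}x\,{\rm d}t\ge\int_0^T\!\!\int_0^R\Big[\phi_n(u_R^{(n)})\,g+\phi_n(g)\,u_R^{(n)}-\phi_n(g)\,g\Big]\,{\rm d}x\,{\rm d}t.
\]
The essential step is to pass to the limit $n\to\infty$ term by term. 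For the left-hand side I would split
\[
\int\phi_n(u_R^{(n)})u_R^{(n)}-\int\chi u_R=\int\left(\phi_n(u_R^{(n)})-\chi\right)u_R^{(n)}+\int\chi\left(u_R^{(n)}-u_R\right),
\]
and note that the first term is bounded by $\|\phi_n(u_R^{(n)})-\chi\|_{L^2}\|u_R^{(n)}\|_{L^2}\to0$ (the $L^2$-norms of $u_R^{(n)}$ being bounded since they converge weakly), while the second tends to $0$ on testing the weak convergence $u_R^{(n)}\rightharpoonup u_R$ against $\chi\in L^2$. On the right-hand side, $\int\phi_n(u_R^{(n)})g\to\int\chi g$ by strong convergence; moreover, since $g$ is bounded its range lies in a fixed compact set, so $\phi_n(g)\to\phi(g)$ uniformly and hence in $L^2$, giving $\int\phi_n(g)u_R^{(n)}\to\int\phi(g)u_R$ (strong times weak) and $\int\phi_n(g)g\to\int\phi(g)g$. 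Collecting these limits yields
\[
\int_0^T\!\!\int_0^R\left(\chi-\phi(g)\right)\left(u_R-g\right)\,{\rm d}x\,{\rm d}t\ge0\qquad\mbox{for every }g\in L^\infty((0,R)\times(0,T)).
\]

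Finally I would close the argument by Minty's choice $g=u_R-\lambda h$ with $\lambda>0$ and arbitrary $h\in L^\infty$: the inequality becomes $\lambda\int(\chi-\phi(u_R-\lambda h))h\ge0$, and dividing by $\lambda$ and letting $\lambda\to0^+$ gives $\int(\chi-\phi(u_R))h\ge0$, using continuity of $\phi$ (the locally uniform limit of the nondecreasing $\phi_n$) together with dominated convergence to pass $\phi(u_R-\lambda h)\to\phi(u_R)$ in $L^2$. Since $h$ is arbitrary, replacing $h$ by $-h$ forces $\int(\chi-\phi(u_R))h=0$ for all $h$, whence $\chi=\phi(u_R)$ a.e. I expect the main obstacle to be the rigorous passage to the limit in the product $\int\phi_n(u_R^{(n)})u_R^{(n)}$, in which one factor converges only weakly; this is exactly where the strong $L^2$-convergence of $\phi_n(u_R^{(n)})$ is indispensable, and it is the linchpin of the whole monotonicity argument. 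A secondary point is to ensure that the hypothesis $\phi_n\to\phi$ is used only on a genuinely compact set, which is why the comparison functions $g$ are taken bounded.
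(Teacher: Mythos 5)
Your argument is correct, but note that the paper itself offers no proof of this statement: Lemma \ref{lchi} is explicitly quoted from the Appendix of \cite{nonli}, so there is no in-paper proof to compare against. What you give is the standard Minty--Browder monotonicity identification, and it is essentially the argument one finds for results of this type (and, in particular, the kind of argument used in the cited appendix): the pointwise inequality $(\phi_n(u_R^{(n)})-\phi_n(g))(u_R^{(n)}-g)\ge0$, the passage to the limit in the product $\int\phi_n(u_R^{(n)})u_R^{(n)}$ via the strong convergence of $\phi_n(u_R^{(n)})$ against the merely weak convergence of $u_R^{(n)}$, and the choice $g=u_R-\lambda h$ are all exactly the right steps, and each limit you take is justified. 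The one point worth flagging is that the final Minty step requires $u_R\in L^\infty$, both so that $g=u_R-\lambda h$ is an admissible bounded comparison function and so that $\phi(u_R-\lambda h)\to\phi(u_R)$ can be dominated; the lemma as literally stated only assumes each $u_R^{(n)}\in L^\infty$, not a uniform bound, and a weak $L^2$ limit of such a sequence need not lie in $L^\infty$. You handle this honestly by invoking the uniform bounds $0\le u_R^{(n)}\le U_0$ available in every application (Corollary \ref{b1} and the construction in Lemma \ref{lww}), which is entirely adequate here; a proof of the lemma in full generality would need an extra truncation argument at that stage.
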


Now we can prove the following lemma.
\begin{lemma}\label{lww}
Let $p,q\in L^\infty((0,R)\times(0,T))$ be such that $0\le p\le V_0$, $0\le q\le U_0$. Then problems (\ref{i}) and (\ref{i2}) have unique weak solutions $u_R$ and $v_R$ respectively with the following properties
\begin{align*}
0\le u_R\le U_0,\quad 0\le v_R\le V_0\quad {\rm in}\ (0,R)\times(0,T).
\end{align*}
\end{lemma}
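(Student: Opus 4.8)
The plan is to obtain existence through a non-degenerate regularisation of $\phi$, feeding the compactness statement of Lemma \ref{lchi}, and to obtain uniqueness from the comparison argument already used in Lemma \ref{lur}. I treat (\ref{i}) in detail; problem (\ref{i2}), which carries homogeneous Neumann conditions at both ends, is handled identically. Since $\phi'$ may vanish at $0$, I first replace $\phi$ by $\phi_n(s):=\phi(s)+\frac{1}{n}s$, so that $\phi_n'\ge\frac{1}{n}>0$, $\phi_n$ is (strictly) nondecreasing, and $\phi_n\to\phi$ uniformly on compact subsets of $\mathbb{R}$. The resulting problem is uniformly parabolic and quasilinear, with reaction $-ku_R p$ that is linear in the unknown ($p\in L^\infty$, $0\le p\le V_0$ fixed), so standard theory with the $C^2$ datum $u_{0,R}$ yields a unique regular solution $u_R^{(n)}$. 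Because $p\ge0$, the constants $0$ and $U_0$ are respectively a sub- and a supersolution — $U_0$ has zero diffusion, reaction $-kU_0 p\le0$, matches the Dirichlet value at $x=0$ and the zero-flux condition at $x=R$, and dominates $u_{0,R}$ — so the maximum principle for the non-degenerate equation gives $0\le u_R^{(n)}\le U_0$.

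Next I would derive the uniform diffusion estimate. Testing the equation with $\phi_n(u_R^{(n)})-\phi_n(U_0)$, which vanishes at $x=0$ while the flux vanishes at $x=R$, and writing the left-hand side as $\frac{d}{dt}\int_0^R\Psi_n(u_R^{(n)})\,dx$ with $\Psi_n(s)=\int_0^s(\phi_n(\sigma)-\phi_n(U_0))\,d\sigma\ge0$, integration by parts yields
\begin{align*}
\int_0^T\!\!\int_0^R\big|\phi_n(u_R^{(n)})_x\big|^2\,dx\,dt\le C,
\end{align*}
with $C$ independent of $n$, the reaction term being absorbed by the $L^\infty$ bound. Hence $\phi_n(u_R^{(n)})$ is bounded in $L^2(0,T;W^{1,2}(0,R))$ and, reading off the equation, $(u_R^{(n)})_t$ is bounded in $L^2(0,T;W^{-1,2}(0,R))$.

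Along a subsequence, the $L^\infty$ bound gives $u_R^{(n)}\rightharpoonup u_R$ in $L^2$ with $0\le u_R\le U_0$, and the diffusion estimate gives $\phi_n(u_R^{(n)})\rightharpoonup\chi$ in $L^2(0,T;W^{1,2})$. To apply Lemma \ref{lchi} I must upgrade the latter to \emph{strong} $L^2$ convergence. Spatial compactness is free from the uniform $W^{1,2}$-in-$x$ bound (Rellich); for time equicontinuity I would use the monotonicity inequality
\begin{align*}
\int_0^{T-h}\!\!\int_0^R\big(\phi_n(u_R^{(n)}(\cdot,t+h))-\phi_n(u_R^{(n)}(\cdot,t))\big)\big(u_R^{(n)}(\cdot,t+h)-u_R^{(n)}(\cdot,t)\big)\,dx\,dt\le Ch,
\end{align*}
obtained by expressing the time increment of $u_R^{(n)}$ through the equation and integrating by parts in $x$. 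A Simon/Aubin--Lions-type argument then gives $\phi_n(u_R^{(n)})\to\chi$ strongly in $L^2$, and Lemma \ref{lchi} identifies $\chi=\phi(u_R)$. Since the boundary trace passes to the weak $W^{1,2}$-limit and $\phi_n(U_0)\to\phi(U_0)$, I obtain $\phi(u_R)\in\phi(\hat u)+L^2(0,T;\Omega_R)$. Passing to the limit in the weak form against any $\xi\in\mathcal{F}_T^R$ — the diffusion term by weak convergence of $\phi_n(u_R^{(n)})_x$, the remaining terms by weak convergence of $u_R^{(n)}$ — shows $u_R$ solves (\ref{i}) weakly; problem (\ref{i2}) gives $0\le v_R\le V_0$ in the same way.

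Uniqueness follows from the comparison computation of Lemma \ref{lur} specialised to the scalar setting: multiplying the difference of the equations by $(m_\alpha^+)'$ of the corresponding $\phi$-difference and letting $\alpha\to0$ produces an $L^1$-contraction, the only change being that the reaction term $-ku_R p$ with $p\ge0$ fixed contributes with a favourable sign. The hard part will be the strong $L^2$ compactness of $\phi_n(u_R^{(n)})$: degeneracy prevents any direct bound on $u_R^{(n)}$ in a space compactly embedded in $L^2$, and it is precisely the time-translate estimate above — whose extra term stems from the nonlinear diffusion — that renders Lemma \ref{lchi} applicable.
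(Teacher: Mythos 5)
Your proposal is correct and follows essentially the same route as the paper: regularise $\phi$ by uniformly parabolic $\phi_n$, use the maximum principle and an energy estimate for uniform bounds, extract a strongly convergent subsequence of $\phi_n(u_R^{(n)})$ so that Lemma \ref{lchi} identifies the limit, pass to the limit in the weak form, and prove uniqueness by the comparison argument of Lemma \ref{lur}. The only (inessential) divergence is the compactness mechanism — the paper bounds $\phi_n(u_R^{(n)})_t$ in $L^2$ and invokes $BV$-compactness to get $L^1$ convergence, whereas you use a time-translate monotonicity estimate combined with the spatial gradient bound; both work, and your sign claim $\Psi_n\ge0$ is a harmless slip since only the uniform boundedness of $\Psi_n(u_R^{(n)})$ is used.
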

\begin{proof}First we construct the solutions $\left\{u_{R}^{(n)}\right\}$, $\left\{v_{R}^{(n)}\right\}$ of sequences of uniformly parabolic problems in which $\phi$ in (\ref{i}) and (\ref{i2}) have been replaced by smooth functions $\phi_n$ where $\phi_n(U_0)=\phi(U_0)$ and $\phi_n'(u_{R}^{(n)})\ge\frac{1}{n}$.
Under these assumption on $\phi_n$, the equations are parabolic non-degenerate and we may apply standard quasilinear theory to obtain the existence and uniqueness of classical solutions.

Then by the similar arguments to that in \cite[p809]{nonli} we know that $\phi_n\left(u_{R}^{(n)}\right)$ is bounded in ${L^\infty}((0,R)\times(0,T))$, $\phi_n\left(u_{R}^{(n)}\right)_x$ is bounded in ${L^2}((0,R)\times(0,T))$ and $\phi_n\left(u_{R}^{(n)}\right)_t$ is bounded in ${L^2}((0,R)\times(0,T))$.
By \cite[p.170]{measu}, which says that $W^{1,1}(\Omega)\subset BV(\Omega)$, we therefore have $\phi_n\left(u_{R}^{(n)}\right)$ is bounded in $BV((0,R)\times(0,T))$ and there exists a subsequence $\left\{u^{(n_j)}_{R}\right\}$ and a function $\chi_1\in BV((0,R)\times(0,T))$ such that
\begin{align*}
\phi_{n_j}\left(u_{R}^{(n_j)}\right)\rightarrow\chi_1\quad {\rm in}\ L^1((0,R)\times(0,T))\ {\rm as}\ j\rightarrow\infty.
\end{align*}

As $\varepsilon>0$, it follows similarly for $v$ that there exists a subsequence $\left\{v_{R}^{(n_j)}\right\}$ and a function $\chi_2\in BV((0,R)\times(0,T))$ such that
\begin{align*}
\phi_{n_j}\left(v_{R}^{(n_j)}\right)\rightarrow\chi_2\quad {\rm in}\ L^1((0,R)\times(0,T))\ {\rm as}\ j\rightarrow\infty.
\end{align*}

We may choose these sequences such that
\begin{align*}
u_{R}^{(n_j)}\rightharpoonup u_R,\quad v_{R}^{(n_j)}\rightharpoonup v_R \quad {\rm in}\ L^2((0,R)\times(0,T)),
\end{align*}
and the sequence $\left\{\phi_n\right\}$ such that $\phi_n\rightarrow \phi$ uniformly. By Lemma \ref{lchi} we have $\chi_1=\phi(u_R)$, $\chi_2=\phi(v_R)$. We know that $\phi_n\left(u_{R}^{(n)}\right)-\phi(U_0)$ is bounded in $L^2(0,T;\Omega_R)$ and $\phi_n\left(v_{R}^{(n)}\right)$ is bounded in $L^2(0,T;W^{1,2}(0,R))$, so there are subsequences, again denoted by $\left\{u_{R}^{(n_j)}\right\}$ and $\left\{v_{R}^{(n_j)}\right\}$ such that
\begin{align*}
\phi_{n_j}\left(u_{R}^{(n_j)}\right)-\phi(U_0)\rightharpoonup\phi(u_R)-\phi(U_0)\quad{\rm in}\ L^2(0,T;\Omega_R),\\
\phi_{n_j}\left(v_{R}^{(n_j)}\right)\rightharpoonup\phi(v_R)\quad{\rm in}\ L^2((0,R)\times(0,T)).
\end{align*}
By a standard limiting argument we obtain that $u_R$ is a weak solution of Problem (\ref{i}) and $v_R$ is a weak solution of Problem (\ref{i2}).

The uniqueness is shown similarly to the proof of Lemma \ref{lur}.
\end{proof}

From Lemma \ref{lww} we immediately deduce that $u_{R}^{(1)}$ and $v_{R}^{(1)}$ are weak solutions of (\ref{iter1}) and (\ref{iter2}). We then define the sequences $\left\{u_{R}^{(m)}\right\}$ and $\left\{v_{R}^{(m)}\right\}$ inductively as follows, let $u_{R}^{(m)}$ be the weak solution of the problem
\begin{align}
\left\{
\begin{array}{llll}
  u_{Rt}^{(m)}=\phi\left(u_{R}^{(m)}\right)_{xx}-ku_{R}^{(m)}v_{R}^{(m-1)},\quad &(x,t)\in(0,R)\times(0,T),\\
  u_{R}^{(m)}(0,t)=U_{0},\quad \phi\left(u_{R}^{(m)}\right)_x(R,t)=0,\quad &\mbox{for}\quad t\in(0,T),\\
  u_{R}^{(m)}(x,0)=u_{0,R}(x),\quad  &\mbox{for}\quad x\in(0,R),
\label{iter1m}
\end{array}
\right.
\end{align}
let $v_{R}^{(m)}$ be the weak solution of the problem
\begin{align}
\left\{
\begin{array}{llll}
   v_{Rt}^{(m)}=\varepsilon\phi(v_{R}^{(m)})_{xx}-ku_R^{(m)}v_R^{(m)},\quad &(x,t)\in(0,R)\times(0,T),\\
  \phi\left(v_{R}^{(m)}\right)_x(0,t)=\phi\left(v_{R}^{(m)}\right)_x(R,t)=0,\quad &\mbox{for}\quad t\in(0,T),\\
  v_{R}^{(m)}(x,0)=v_{0,R}(x),\quad  &\mbox{for}\quad x\in(0,R).
\label{iter2m}
\end{array}
\right.
\end{align}
Then $u_{R}^{(m)}$ and $v_{R}^{(m)}$ satisfy\\
 {\rm(i)} $\phi\left(u_{R}^{(m)}\right)\in \phi(\hat u)+L^2(0,T;\Omega_R)$,  and
\begin{align*}
&\int_0^Ru_{0,R}\xi(x,0){\rm d}x+\int_0^T\int_0^Ru_{R}^{(m)}\xi_t{\rm d}x{\rm d}t\\=&\int_0^T\int_0^R\phi\left(u_{R}^{(m)}\right)_x\xi_x{\rm d}x{\rm d}t+k\int_0^T\int_0^R\xi u_{R}^{(m)}v_{R}^{(m-1)}{\rm d}x{\rm d}t,
\end{align*}
{\rm(ii)} $\phi\left(v_{R}^{(m)}\right)\in L^2(0,T;W^{1,2}(0,R))$
\begin{align*}
&\int_0^Rv_{0,R}\xi(x,0){\rm d}x+\int_0^T\int_0^Rv_{R}^{(m)}\xi_t{\rm d}x{\rm d}t\\=&\int_0^T\int_0^R\varepsilon\phi\left(v_{R}^{(m)}\right)_x\xi_x{\rm d}x{\rm d}t+k\int_0^T\int_0^R\xi u_{R}^{(m)}v_{R}^{(m)}{\rm d}x{\rm d}t,\end{align*}
where $\xi\in \mathcal{F}_T^R$.

In the following, we prove the monotone dependence of $u_{R}^{(m)},v_{R}^{(m)}$ on $m$.
\begin{lemma}
The problem (\ref{iter1m}) and (\ref{iter2m}) have unique solutions with the following properties
\begin{itemize}\item[{\rm (i)}] $u_{R}^{(m)}$ and $v_{R}^{(m)}$ are weak solutions of problems (\ref{iter1m}) and (\ref{iter2m})$;$
\item[{\rm (ii)}] $0\le u_{R}^{(m)}\le u_{R}^{(m+1)}\le U_0, \qquad 0\le v_{R}^{(m+1)}\le v_{R}^{(m)}\le V_0$.
 \end{itemize}
\label{lwz}
\end{lemma}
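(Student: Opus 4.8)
The plan is to separate the two assertions: existence, uniqueness and the outer bounds follow from Lemma \ref{lww} by an induction on $m$, while the monotonicity in $m$ follows from a comparison argument for the scalar problems. For part (i) and the bounds $0\le u_R^{(m)}\le U_0$, $0\le v_R^{(m)}\le V_0$, I would set $v_R^{(0)}:=V_0$ and argue inductively. Suppose $v_R^{(m-1)}$ is a weak solution with $0\le v_R^{(m-1)}\le V_0$. Then Problem (\ref{iter1m}) is exactly Problem (\ref{i}) with $p=v_R^{(m-1)}$, so Lemma \ref{lww} yields a unique weak solution $u_R^{(m)}$ with $0\le u_R^{(m)}\le U_0$. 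Feeding this into Problem (\ref{iter2m}), which is Problem (\ref{i2}) with $q=u_R^{(m)}$, Lemma \ref{lww} again gives a unique weak solution $v_R^{(m)}$ with $0\le v_R^{(m)}\le V_0$. Since (\ref{iter1}), (\ref{iter2}) are the cases $m=1$ (with $v_R^{(0)}=V_0$), this closes the induction and establishes (i) together with the two-sided bounds.

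The key tool for (ii) is the monotone dependence of the scalar problems on their reaction coefficients: if $0\le p_2\le p_1\le V_0$ and $u_i$ solves (\ref{i}) with coefficient $p_i$ and the same initial and boundary data, then $u_1\le u_2$; and if $0\le q_2\le q_1\le U_0$ and $v_i$ solves (\ref{i2}) with coefficient $q_i$ and the same data, then $v_1\le v_2$. Each follows from a comparison in the spirit of Lemma \ref{lur}. For the first, since $p_1\ge p_2$ and $u_1\ge0$ one has $-ku_1p_1\le -ku_1p_2$, so $u_1$ is a weak subsolution of the problem solved by $u_2$; forming the difference of the two weak formulations, testing (after regularisation) against $(m_\alpha^+)'(\phi(u_1)-\phi(u_2))$ and letting $\alpha\to0$ precisely as in Lemma \ref{lur}---now with a single equation, the reaction term contributing with the favourable sign---gives $\int_0^R(u_1-u_2)^+(x,t_0)\,{\rm d}x\le0$, hence $u_1\le u_2$. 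The argument for $v$ is identical, using the Neumann conditions at both endpoints.

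With these two facts the monotonicity is a bookkeeping induction on $m$. The bound $v_R^{(1)}\le V_0=v_R^{(0)}$ is the base case. Assuming $v_R^{(m)}\le v_R^{(m-1)}$, the monotone-dependence statement for (\ref{i}) with $p_1=v_R^{(m-1)}\ge p_2=v_R^{(m)}$ gives $u_R^{(m)}\le u_R^{(m+1)}$; then the statement for (\ref{i2}) with $q_1=u_R^{(m+1)}\ge q_2=u_R^{(m)}$ gives $v_R^{(m+1)}\le v_R^{(m)}$, which also advances the hypothesis. Combined with the bounds from part (i) this yields the full chain $0\le u_R^{(m)}\le u_R^{(m+1)}\le U_0$ and $0\le v_R^{(m+1)}\le v_R^{(m)}\le V_0$. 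The main obstacle is the comparison itself in the degenerate setting: because $\phi'$ may vanish at $0$ one cannot compare $u_1,u_2$ directly but must work with $\phi(u_1)-\phi(u_2)$ and the regularised positive part $m_\alpha^+$ as in Lemma \ref{lur}, and one must check that the weak solutions produced by Lemma \ref{lww} carry enough regularity (the $L^1$ control of $u_t$ and $\phi(u)_{xx}$, together with the $L^2$ gradient and trace control) for that computation to be legitimate; once this scalar comparison is secured, the remainder is routine.
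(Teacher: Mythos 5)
Your proposal is correct and follows essentially the same route as the paper: existence, uniqueness and the bounds $0\le u_R^{(m)}\le U_0$, $0\le v_R^{(m)}\le V_0$ come from Lemma \ref{lww} applied inductively, and the monotonicity in $m$ comes from observing that, e.g., $u_R^{(1)}$ is a subsolution of the problem solved by $u_R^{(2)}$ because $v_R^{(1)}\le V_0$, then invoking a scalar comparison principle proved by the same regularised sign-function argument as Lemma \ref{lur}. Your explicit packaging of this as monotone dependence on the reaction coefficients $p$ and $q$ is just a cleaner statement of the sub/supersolution comparison the paper uses.
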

\begin{proof}The proof proceeds by induction. We first note that $u_{R}^{(1)}$ and $u_{R}^{(2)}$ satisfy the equations
\begin{align*}
&u_{Rt}^{(1)}=\phi\left(u_{R}^{(1)}\right)_{xx}-ku_{R}^{(1)}V_0,\\
&u_{Rt}^{(2)}=\phi\left(u_{R}^{(2)}\right)_{xx}-ku_{R}^{(2)}v_{R}^{(1)},
\end{align*}
almost everywhere in $(0,R)\times(0,T)$.

Since $u_{R}^{(1)},u_{R}^{(2)}$ satisfy identical initial and boundary conditions, $u_{R}^{(1)}$ is a subsolution for (\ref{iter1m}) with $m=2$, since $v_{R}^{(1)}\le V_0$, which implies $u_{R}^{(2)}\ge u_{R}^{(1)}$. Now consider $v_{R}^{(1)}$ and $v_{R}^{(2)}$
\begin{align*}
v_{Rt}^{(1)}=\varepsilon\phi\left(v_{R}^{(1)}\right)_{xx}-ku_{R}^{(1)}v_{R}^{(1)},\\
v_{Rt}^{(2)}=\varepsilon\phi\left(v_{R}^{(2)}\right)_{xx}-ku_{R}^{(2)}v_{R}^{(2)}.
\end{align*}
Since $v_{R}^{(1)},v_{R}^{(2)}$ satisfy identical initial and boundary conditions, $v_{R}^{(2)}$ is a subsolution for (\ref{iter2m}) as $m=2$, which implies $v_{R}^{(1)}\ge v_{R}^{(2)}$. The proof of monotone dependence of $u_{R}^{(m)}$ and $v_{R}^{(m)}$ of $m$ for large values of $m$ is similar.
\end{proof}

We can now establish the existence of a weak solution of Problem (\ref{f}) when $\varepsilon$ is strictly positive.
\begin{theorem}\label{er}
There exists a unique weak solution $(u_R,v_R)$ of Problem (\ref{f}) such that
\begin{align*}
0\le u_R\le U_0 \quad {\rm and}\quad 0\le v_R\le V_0.
\end{align*}
\end{theorem}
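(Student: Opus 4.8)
The plan is to obtain $(u_R,v_R)$ as the monotone limit of the iterates constructed in Lemma \ref{lwz}, and then to verify that this limit satisfies the weak formulation of (\ref{f}). By Lemma \ref{lwz}(ii) the sequence $\{u_R^{(m)}\}$ is nondecreasing and bounded above by $U_0$, while $\{v_R^{(m)}\}$ is nonincreasing and bounded below by $0$; hence both converge pointwise almost everywhere in $(0,R)\times(0,T)$ to limits $u_R$ and $v_R$ with $0\le u_R\le U_0$ and $0\le v_R\le V_0$. Since the sequences are uniformly bounded, dominated convergence upgrades this to strong convergence in $L^p((0,R)\times(0,T))$ for every $p\in[1,\infty)$, and continuity of $\phi$ together with boundedness gives $\phi(u_R^{(m)})\rightarrow\phi(u_R)$ and $\phi(v_R^{(m)})\rightarrow\phi(v_R)$ strongly in $L^2((0,R)\times(0,T))$.

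Next I would record the $m$-independent \emph{a priori} estimates. Each $u_R^{(m)}$ solves a problem of type (\ref{i}) with $p=v_R^{(m-1)}$ and each $v_R^{(m)}$ a problem of type (\ref{i2}) with $q=u_R^{(m)}$, and the bounds $0\le p\le V_0$, $0\le q\le U_0$ hold uniformly. Repeating the energy estimates from the proof of Lemma \ref{lww} — testing with $\phi(u_R^{(m)})-\phi(\hat u)$ and with $\phi(v_R^{(m)})$ respectively — yields bounds on $\phi(u_R^{(m)})$ and $\phi(v_R^{(m)})$ in $L^2(0,T;W^{1,2}(0,R))$ that depend only on $U_0,V_0,R,T,\phi$ and the data, hence are independent of $m$. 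Consequently, along a subsequence $\phi(u_R^{(m)})_x\rightharpoonup\psi$ weakly in $L^2$; because $\phi(u_R^{(m)})\rightarrow\phi(u_R)$ strongly in $L^2$, the weak limit is identified as $\psi=\phi(u_R)_x$, and similarly $\phi(v_R^{(m)})_x\rightharpoonup\phi(v_R)_x$. The boundary structure passes to the limit, giving $\phi(u_R)\in\phi(\hat u)+L^2(0,T;\Omega_R)$ and $\phi(v_R)\in L^2(0,T;W^{1,2}(0,R))$, which is requirement (i) of the weak-solution definition.

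It then remains to pass to the limit in the two integral identities satisfied by $u_R^{(m)}$ and $v_R^{(m)}$ for a fixed test function $\xi\in\mathcal{F}_T^R$. The terms $\int\int u_R^{(m)}\xi_t$ and $\int\int\phi(u_R^{(m)})_x\xi_x$ (and their $v$-analogues) converge by the strong $L^2$ convergence of $u_R^{(m)},v_R^{(m)}$ and the weak $L^2$ convergence of the fluxes. For the reaction terms, the products $u_R^{(m)}v_R^{(m-1)}$ and $u_R^{(m)}v_R^{(m)}$ converge to $u_Rv_R$ in $L^1((0,R)\times(0,T))$, since each factor is uniformly bounded and converges strongly in $L^2$; this allows passage to the limit in $k\int\int\xi u_R^{(m)}v_R^{(m-1)}$ and $k\int\int\xi u_R^{(m)}v_R^{(m)}$. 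Collecting these limits shows that $(u_R,v_R)$ is a weak solution of (\ref{f}). Uniqueness is already furnished by Corollary \ref{cu1}, and the stated bounds follow from the monotone limits (or from Corollary \ref{b1}).

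I expect the main obstacle to be not the monotone passage to the limit in $m$, but securing the $m$-independent control of the nonlinear diffusion flux $\phi(u_R^{(m)})_x$ in $L^2$ and then correctly identifying its weak limit with $\phi(u_R)_x$ in spite of the possible degeneracy of $\phi'$ at $0$. The point that makes the identification work is that one has strong $L^2$ convergence of $\phi(u_R^{(m)})$ itself, rather than merely of $u_R^{(m)}$, in the spirit of Lemma \ref{lchi}; by contrast, the reaction products are harmless precisely because monotonicity delivers strong convergence of both factors.
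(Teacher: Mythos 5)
Your proposal is correct and follows essentially the same route as the paper: take the monotone pointwise limit of the iterates from Lemma \ref{lwz}, use the $m$-independent bounds from the proof of Lemma \ref{lww} to get weak convergence of $\phi(u_R^{(m)})-\phi(U_0)$ in $L^2(0,T;\Omega_R)$ and of $\phi(v_R^{(m)})$ in $L^2(0,T;W^{1,2}(0,R))$, pass to the limit in the weak formulation by dominated convergence, and invoke the comparison principle (Lemma \ref{lur}/Corollary \ref{cu1}) for uniqueness. Your extra care in identifying the weak limit of the flux via the strong $L^2$ convergence of $\phi(u_R^{(m)})$ is a sound elaboration of a step the paper leaves implicit.
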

\begin{proof}Lemma \ref{lwz} implies that the functions $u_{R}^{(m)}$ and $v_{R}^{(m)}$ tend (pointwise) to functions $u_R, v_R$ as $m$ tends to infinity.
By the proof of Lemma \ref{lww} we conclude there are subsequences $\left\{u_{R}^{(m_j)}\right\}$ and $\left\{v_{R}^{(m_j)}\right\}$ such that
\begin{align*}
&\phi\left(u_{R}^{(m_j)}\right)-\phi(U_0)\rightharpoonup\phi(u_R)-\phi(U_0)\quad {\rm weakly \ in}\ L^2(0,T;\Omega_R),\\
&\phi\left(v_{R}^{(m_j)}\right)\rightharpoonup\phi(v_R)\quad {\rm weakly \ in}\ L^2(0,T;W^{1,2}(0,R)).
\end{align*}
Then, by the Dominated Convergence Theorem, passing to the limit as $m_j\rightarrow\infty$ leads to
\begin{align*}
\int_0^Ru_{0,R}\xi(x,0){\rm d}x+\int_0^T\int_0^Ru_R\xi_t{\rm d}x{\rm d}t=\int_0^T\int_0^R\phi(u_R)_x\xi_x{\rm d}x{\rm d}t+k\int_0^T\int_0^R\xi u_Rv_R{\rm d}x{\rm d}t,
\end{align*}
where $\xi\in \mathcal{F}_T^R$. We can readily show $u_R$ is a weak solution of (\ref{i}) with $p=v_R$. From DiBenedetto \cite[Theorem 7.1]{dib}, and conclude that $u_R\in C([0,R]\times[0,T])$. Similarly, we know that $v_R$ is a weak solution of problem (\ref{i2}) with $q=u_R$ and we can conclude also that $v_R\in C([0,R]\times[0,T])$. It follows from Lemma \ref{lur} that $(u_R,v_R)$ is the unique weak solution of problem (\ref{f}).
\end{proof}

Next, we will prove the existence of a weak solution of (\ref{a}) with $\varepsilon>0$ by looking at $(u_R,v_R)$ in the limit $R\rightarrow\infty$. First, we prove some preliminary estimates. In the following, $C(L)$ denotes some $L$-dependent constant which varies according to context.
\begin{lemma}\label{lkrb}
Suppose $\varepsilon>0$ and $L>0$. Then there exists a constant $C(L)$ independent of $k$ such that if $R>L+1$, then
\begin{align}
k\int_0^T\int_0^{L+1}u_Rv_R{\rm d}x{\rm d}t\le C(L).
\label{krb}
\end{align}
\end{lemma}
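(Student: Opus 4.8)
The plan is to derive the bound by testing the equation for $v_R$ against a time-independent spatial cut-off, exploiting that $v_R$ carries homogeneous Neumann conditions at \emph{both} endpoints $x=0$ and $x=R$. This is the crucial structural advantage over the $u_R$-equation: testing the latter would produce an uncontrolled boundary-flux term $\phi(u_R)_x(0,t)$ at the Dirichlet end, whereas for $v_R$ every boundary contribution disappears. Localisation is equally essential, since the unweighted identity only yields $k\int_0^T\int_0^R u_Rv_R\le\int_0^R v_{0,R}\le V_0R$, which blows up as $R\to\infty$; a cut-off supported near $x=0$ removes this $R$-dependence and is what makes the eventual passage $R\to\infty$ possible.

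Concretely, I would fix $\zeta\in C^\infty(\mathbb{R}^+)$ with $0\le\zeta\le1$, $\zeta\equiv1$ on $[0,L+1]$ and $\zeta\equiv0$ on $[L+2,\infty)$, so that $\zeta'(0)=0$ and $\Vert\zeta''\Vert_{L^1}$ depends only on $L$; for the limit $R\to\infty$ it suffices to treat $R>L+2$, whence $\zeta$ and $\zeta'$ vanish near $x=R$. Multiplying the $v_R$-equation by $\zeta$ and integrating over $(0,R)\times(0,T)$, two integrations by parts in $x$ annihilate all boundary terms — the endpoint contributions by the Neumann conditions $\phi(v_R)_x(0,\cdot)=\phi(v_R)_x(R,\cdot)=0$, and the remaining ones because $\zeta'(0)=0$ and $\zeta\equiv\zeta'\equiv0$ near $R$ — leaving
\begin{align*}
k\int_0^T\int_0^R u_Rv_R\,\zeta\,{\rm d}x\,{\rm d}t=\varepsilon\int_0^T\int_0^R\phi(v_R)\,\zeta''\,{\rm d}x\,{\rm d}t+\int_0^R\zeta\,(v_{0,R}-v_R(\cdot,T))\,{\rm d}x.
\end{align*}
Since $\zeta\equiv1$ on $[0,L+1]$ and $u_Rv_R\ge0$, the left-hand side dominates $k\int_0^T\int_0^{L+1}u_Rv_R$. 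On the right, Corollary \ref{b1} gives $0\le v_R\le V_0$, whence $0\le\phi(v_R)\le\phi(V_0)$ and $v_R(\cdot,T)\ge0$; discarding the nonnegative $-\int_0^R\zeta\,v_R(\cdot,T)\,{\rm d}x$ and using $v_{0,R}\le V_0$ bounds the right-hand side by $\varepsilon\phi(V_0)\,T\,\Vert\zeta''\Vert_{L^1}+V_0(L+2)=:C(L)$, a constant depending on $L,T,V_0,\phi,\varepsilon$ but independent of $k$ and $R$, as required.

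The main obstacle is not the estimate itself but its rigorous justification: $(u_R,v_R)$ is only a weak solution, and the admissible test functions of the Definition vanish at $x=0$ and at $t=T$, whereas my natural choice $\zeta$ equals $1$ near $x=0$ and is taken time-independent. I would circumvent this by first carrying out the computation for the smooth approximating solutions built in the proofs of Lemma \ref{lww} and Theorem \ref{er} — for which the equation and the homogeneous Neumann conditions hold classically, so the two integrations by parts are fully legitimate — thereby obtaining the displayed identity with $\phi$ replaced by $\phi_n$ and with the approximating $v_R^{(\cdot)}$. As the resulting constant is uniform in the approximation parameters, the inequality passes to the limit via the convergences already established ($v_R^{(\cdot)}\rightharpoonup v_R$ and $\phi(v_R^{(\cdot)})\to\phi(v_R)$ in the relevant $L^2$ and $L^1$ topologies), with Fatou's lemma controlling the nonnegative left-hand side and dominated convergence the right-hand side. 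Alternatively, one may observe that for the Neumann problem the boundary term at $x=0$ vanishes regardless of the test function's value there, so the weak formulation in fact extends to test functions of the form $\zeta(x)\theta(t)$ with $\theta$ Lipschitz, $\theta(T)=0$, approximating $\mathbf{1}_{[0,t_0]}$, and letting $t_0\uparrow T$ at the end.
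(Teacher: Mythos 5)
Your proposal is correct, but it takes a different route from the paper. The paper's proof tests the $u_R$-equation with a cut-off $\varphi^L$ that vanishes to second order at $x=0$ (so that the unknown Dirichlet boundary flux $\phi(u_R)_x(0,t)$ never appears), obtaining $k\iint u_Rv_R\varphi^L\le C(L)$ from $0\le u_R\le U_0$ and the $L^1$-bound on $\varphi^L_{xx}$; since $\varphi^L$ is not identically $1$ near $x=0$, the region adjacent to the Dirichlet boundary is not controlled by that computation alone and implicitly requires the $v_R$-equation there, as in the strategy the authors describe for Lemma \ref{lk}. You instead test the $v_R$-equation with a cut-off equal to $1$ on $[0,L+1]$, exploiting the homogeneous Neumann conditions at both endpoints so that all boundary terms vanish and the full integral over $(0,L+1)$ is captured in one step --- for this local, $L$-dependent bound that is arguably the cleaner choice, and your estimate $\varepsilon\phi(V_0)T\Vert\zeta''\Vert_{L^1}+V_0(L+2)$ is indeed independent of $k$ and $R$ as required (the $\varepsilon$-dependence is harmless here since the lemma assumes $\varepsilon>0$ fixed and only claims $k$-independence). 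What the paper's $u$-based computation buys in exchange is a constant that is also uniform in $\varepsilon\ge0$ and the far-field decay coming from $u_{0,R}\in L^1$ rather than $v_{0,R}\to V_0$, which is what is needed later for the global, $L$-independent bound of Lemma \ref{lk}; for the present statement either works. Your handling of the two technical points --- the restriction to $R>L+2$ (the range $L+1<R\le L+2$ being covered by the trivial unweighted identity $k\iint u_Rv_R\le\int_0^Rv_{0,R}\le V_0(L+2)$) and the justification of the time-independent, non-vanishing-at-$x=0$ test function via the smooth approximating solutions of Lemma \ref{lww} --- is sound and in fact more careful than the paper's own exposition.
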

\begin{proof}Introduce a cut-off function $\varphi^1\in C^\infty(\mathbb{R}^+)$ such that $0\le\varphi^1(x)\le 1$ for all $x\in\mathbb{R}^+$, $\varphi^1(0)=\varphi^1_x(0)=0$, $\varphi^{1}(x)=1$ when $x\in[1,2]$ and $\varphi^{1}(x)=0$ when $x\ge3$.
Then given $L\ge2$, define the family of cut-off functions $\varphi^L\in C^\infty(\mathbb{R}^+)$ such that $\varphi^{L}(x)=\varphi^1(x)$ when $x\in[0,1]$, $\varphi^{L}(x)=1$ when $x\in[1,L]$ and $\varphi^{L}(x)=\varphi^{1}(x+2-L)$ when $x\ge L$.
Note that $0\le\varphi^L\le 1$ for all $L$, and $\varphi^L_x$, $\varphi^L_{xx}$ are bounded in $L^\infty(\mathbb{R}^+)$ independently of $L$.

Multiplying the equation for $u_R$ by $\varphi^L$ and integrating over $(0,R)\times(0,T)$ gives that
\begin{align*}
k\int_0^T\int_0^{L+1}u_Rv_R\varphi^L{\rm d}x{\rm d}t=&\int_0^T\int_0^{L+1}\phi(u_R)\varphi^L_{xx}{\rm d}x{\rm d}t+\int_0^{L+1}\varphi^Lu_{0,R}{\rm d}x\\&-\int_0^{L+1}\varphi^Lu_R(x,T){\rm d}x.
\end{align*}
The fact that $0\le u_R\le U_0$, together with the Lebesgue's Monotone Convergence Theorem, yield (\ref{krb}).
\end{proof}

\begin{lemma}\label{lrb}
Suppose $\varepsilon>0$. Then for each $L\ge1$, $\phi(u_R),\phi(v_R)$ are bounded in $L^2\big(0,T;W^{1,2}(0,L)\big)$ independently of $k$ and $R$.
\end{lemma}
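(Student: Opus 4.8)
The plan is to prove the two $L^2$ bounds by a weighted energy (multiplier) argument, in which the coupling term $ku_Rv_R$ is absorbed using the uniform $L^1$ estimate of Lemma~\ref{lkrb}. The $L^2$ bound on $\phi(u_R),\phi(v_R)$ themselves is free: since $0\le u_R\le U_0$, $0\le v_R\le V_0$ (Corollary~\ref{b1}) and $\phi$ is increasing and nonnegative, one has $0\le\phi(u_R)\le\phi(U_0)$ and $0\le\phi(v_R)\le\phi(V_0)$, so these are bounded in $L^2((0,L)\times(0,T))$ uniformly in $k,R$. Hence the whole task is to bound the spatial derivatives $\phi(u_R)_x$ and $\phi(v_R)_x$ in $L^2((0,L)\times(0,T))$. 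Fix $L\ge1$ and, as in Lemma~\ref{lkrb}, a cut-off $\zeta\in C^\infty(\mathbb R^+)$ with $0\le\zeta\le1$, $\zeta\equiv1$ on $[0,L]$ and $\zeta\equiv0$ on $[L+1,\infty)$; take $R>L+1$ so that $\zeta$ is supported in $[0,R)$.

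For $u_R$, the device is to test the equation against $\zeta^2(\phi(u_R)-\phi(U_0))$ rather than $\zeta^2\phi(u_R)$: subtracting the constant $\phi(U_0)$ makes the multiplier vanish at $x=0$ (because $u_R(0,\cdot)=U_0$), which is exactly what kills the boundary term produced by integration by parts at the Dirichlet end. Writing $w:=\phi(u_R)-\phi(U_0)$ and $G(s):=\int_{U_0}^s(\phi(\sigma)-\phi(U_0))\,d\sigma\ge0$, and integrating $u_{Rt}=\phi(u_R)_{xx}-ku_Rv_R$ multiplied by $\zeta^2 w$ over $(0,R)\times(0,t_0)$, the time term becomes $\int_0^R\zeta^2[G(u_R(\cdot,t_0))-G(u_{0,R})]\,dx$, which is bounded by $C(L)$ since $G$ is bounded on $[0,U_0]$. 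Integrating the diffusion term by parts, the boundary contributions vanish (at $x=0$ because $w=0$, near $x=L+1$ because $\zeta=0$), leaving $-\int\!\!\int\zeta^2 w_x^2\,dx\,dt-2\int\!\!\int\zeta\zeta_x w\,w_x\,dx\,dt$; the cross term is absorbed by Young's inequality, since $|w|\le\phi(U_0)$ and $\zeta_x$ is bounded. Finally the reaction term satisfies $|k\int\!\!\int u_Rv_R\zeta^2 w\,dx\,dt|\le\phi(U_0)\,k\int_0^T\!\int_0^{L+1}u_Rv_R\,dx\,dt\le C(L)$ by Lemma~\ref{lkrb}. Rearranging yields $\int_0^T\!\int_0^L(\phi(u_R)_x)^2\,dx\,dt\le\int\!\!\int\zeta^2 w_x^2\,dx\,dt\le C(L)$, uniformly in $k$ and $R$.

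For $v_R$ the same scheme applies with multiplier $\zeta^2\phi(v_R)$ (now both relevant boundary terms vanish directly: $\phi(v_R)_x(0,\cdot)=0$ by the Neumann condition, and $\zeta$ vanishes near $L+1$), and with $\widetilde G(s):=\int_0^s\phi(\sigma)\,d\sigma\ge0$ in place of $G$; the only change is that the good term carries the factor $\varepsilon$, so dividing by the fixed $\varepsilon>0$ gives $\int_0^T\!\int_0^L(\phi(v_R)_x)^2\,dx\,dt\le C(L)/\varepsilon$, a bound that may depend on $\varepsilon$ but not on $k$ or $R$. Since all these manipulations (chain rule and integration by parts) are only formal at the level of the weak solution, I would carry them out on the non-degenerate classical approximations $u_R^{(n)},v_R^{(n)}$ from the proof of Lemma~\ref{lww}/Theorem~\ref{er}, where every step is legitimate and the constants are uniform in $n,k,R$, and then pass to the limit, using weak lower semicontinuity of the $L^2$-norm to transfer the gradient bound to $\phi(u_R)_x,\phi(v_R)_x$. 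The main obstacle is the treatment of the $x=0$ boundary for the $u$-equation, resolved by the shifted multiplier $\zeta^2(\phi(u_R)-\phi(U_0))$, together with the control of the reaction term, which is precisely where the $k$-independent estimate of Lemma~\ref{lkrb} is indispensable; the remaining estimates are routine.
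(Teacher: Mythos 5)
Your proposal is correct and follows essentially the same route as the paper: the paper also multiplies the $u_R$-equation by $\big[\phi(u_R)-\phi(U_0)\big]$ times a cut-off supported in $[0,L+1]$, handles the time term via the primitive $F=\int_0^{u_R}\phi(s)\,{\rm d}s$, and absorbs the reaction term using the $k$-independent $L^1$ bound of Lemma~\ref{lkrb}, with the $v_R$-estimate obtained likewise. The only cosmetic differences are that you use $\zeta^2$ and Young's inequality for the cross term where the paper uses a single power of the cut-off and a second integration by parts, and you make explicit the (correct) passage through the non-degenerate approximations and the $\varepsilon$-dependence of the $v_R$-bound.
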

\begin{proof}Now we introduce a cut-off function $\psi^{1}\in C^{\infty}\left(\mathbb{R}^{+}\right)$ such that $0\le\psi^{1}\le1$ for $x\in\mathbb{R}^{+}$, $\psi^{1}=1$ when $x\le 1$ and $\psi^{1}=0$ when $x\ge 2$.
Then given $L\ge1$, define the family of cut-off functions $\psi^{L}\in C^{\infty}(\mathbb{R}^{+})$ such that $\psi^{L}=1$ when $x\le L$ and $\psi^{L}=\psi^{1}(x+1-L)$ when $x\ge L$.
Clearly $\psi^{L}$, $\psi^{L}_{x}$ and $\psi^{L}_{xx}$ are bounded in $L^{\infty}(\mathbb{R}^+)$ independently of $L$. Suppose that $R>L+1$. Then multiplying the equation for $u_R$ by $\big[\phi(u_R)-\phi(U_0)\big]\psi^L$ and integrating over $(0,R)\times(0,T)$ gives
\begin{align*}
&\int_0^T\int_0^{L+1}\big[\phi(u_R)-\phi(U_0)\big]\psi^Lu_{Rt}{\rm d}x{\rm d}t\\=&-\int_0^T\int_0^{L+1}|\phi(u_R)_x|^2\psi^L{\rm d}x{\rm d}t+\frac{1}{2}\int_0^T\int_0^{L+1}\big[\phi(u_R)-\phi(U_0)\big]^2\psi^L_{xx}{\rm d}x{\rm d}t\\&-k\int_0^T\int_0^{L+1}\big[\phi(u_R)-\phi(U_0)\big]\psi^L u_Rv_R{\rm d}x{\rm d}t,
\end{align*}
Now letting $F=\displaystyle\int_0^{u_R}\phi(s){\rm d}s\le C$, we have
\begin{align*}
&\int_0^T\int_0^{L+1}|\phi(u_R)_x|^2\psi^L{\rm d}x{\rm d}t\\=&-\int_0^{L+1}\phi(U_0)\big(u_{0,R}-u_R(x,T)\big)\psi^L{\rm d}x-\int_0^{L+1}\big[F(x,T)-F(x,0)\big]\psi^L{\rm d}x\\ &+\frac{1}{2}\int_0^T\int_0^{L+1}\big[\phi(u_R)-\phi(U_0)\big]^2\psi^L_{xx}{\rm d}x{\rm d}t-k\int_0^T\int_0^{L+1}\big[\phi(u_R)-\phi(U_0)\big]\psi^L u_Rv_R{\rm d}x{\rm d}t.
\end{align*}
We know $\phi(u_R)-\phi(U_0)\in L^\infty((0,R)\times(0,T))$, so Lemma \ref{lkrb} yields
\begin{align}
\int_0^T\int_0^{L+1}|\phi(u_R)_x|^2{\rm d}x{\rm d}t\le C,
\label{lrxb1}
\end{align}
independently of $k$ and $R$. If $\varepsilon>0$, the estimate for $\phi(v_R)_x$ can be proved likewise, using the equation for $v_R$.
\end{proof}

In order to prove that the sets $\{u_R\}_{R>0},\{v_R\}_{R>0}$ are each relatively compact in $L^2_{{loc}}(\mathbb{R}^+\times(0,T))$, we now prove estimates of space and time translates of $u_R,v_R$.

It is convenient to introduce a shorthand notation for space and time translates. Given a function $h$, let
\begin{align}
S_\delta h(x,t):=h(x+\delta,t),\quad T_\tau h(x,t):=h(x,t+\tau),
\label{lnst}
\end{align}
for all $(x,t)$ in a suitable space-time domain and appropriate $\delta$ and $\tau$.

As a result of the gradient bounds in Lemma \ref{lrb}, the following result can be proved by adapting the proof of \cite[Lemma 2.6]{spat}.
\begin{lemma}
Suppose $\varepsilon>0$. Then for each $L>0$ and $r\in(0,1)$, there exists a constant $C(L)$, independent of $k$ and $\delta$, such that
\begin{align*}
\int_0^T\int_r^{L+1}|\phi(S_\delta u_R)-\phi(u_R)|^2{\rm d}x{\rm d}t\le C(L)|\delta|^2,\\
\int_0^T\int_r^{L+1}|\phi(S_\delta v_R)-\phi(v_R)|^2{\rm d}x{\rm d}t\le C(L)|\delta|^2,
\end{align*}
for all $\delta\in\mathbb{R}$, $|\delta|\le r$.
\label{lcl}
\end{lemma}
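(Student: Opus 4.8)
The plan is to derive both estimates from the gradient bound of Lemma \ref{lrb} by the classical device of representing a space translate as a line integral of the spatial derivative. Since the two inequalities have identical structure, with $\phi(u_R)$ replaced by $\phi(v_R)$ and the corresponding gradient bound of Lemma \ref{lrb} invoked, it suffices to treat $g:=\phi(u_R)$.

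First I would fix $L>0$ and $r\in(0,1)$, and take $R$ large enough, say $R>L+3$, so that Lemma \ref{lrb} applies on the interval $(0,L+2)$ and yields $\int_0^T\int_0^{L+2}|g_x|^2\,{\rm d}x\,{\rm d}t\le C(L)$, with $C(L)$ independent of $k$ and $R$. This bound in particular gives $g(\cdot,t)\in W^{1,2}(0,L+2)$ for a.e. $t\in(0,T)$, so $g(\cdot,t)$ is absolutely continuous in $x$ and, for $\delta>0$ and a.e. $x\in(r,L+1)$,
\begin{align*}
g(x+\delta,t)-g(x,t)=\int_0^\delta g_x(x+s,t)\,{\rm d}s.
\end{align*}
Applying the Cauchy--Schwarz inequality produces the factor $|\delta|$,
\begin{align*}
|g(x+\delta,t)-g(x,t)|^2\le\delta\int_0^\delta|g_x(x+s,t)|^2\,{\rm d}s,
\end{align*}
after which I would integrate over $(r,L+1)\times(0,T)$, interchange the order of integration by Fubini's theorem, and substitute $y=x+s$ in the inner integral.

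The role of the hypotheses $r\in(0,1)$ and $|\delta|\le r$ is exactly to guarantee that the translated variable stays inside the interval where the gradient bound is available: for $s$ between $0$ and $\delta$ and $x\in(r,L+1)$ one has $x+s\in(0,L+2)$, the lower cutoff $r$ together with $|\delta|\le r$ preventing the translate from leaving the half-line $(0,\infty)$ when $\delta<0$. Consequently the inner spatial integral is dominated by $\int_0^{L+2}|g_x(y,t)|^2\,{\rm d}y$, and carrying out the remaining integrations gives
\begin{align*}
\int_0^T\int_r^{L+1}|g(x+\delta,t)-g(x,t)|^2\,{\rm d}x\,{\rm d}t\le|\delta|^2\int_0^T\int_0^{L+2}|g_x|^2\,{\rm d}x\,{\rm d}t\le C(L)|\delta|^2.
\end{align*}

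The case $\delta<0$ is handled symmetrically, writing the difference as $-\int_0^{|\delta|}g_x(x-s,t)\,{\rm d}s$ and noting that $x-s\in(0,L+1)$ for $x\in(r,L+1)$ and $0<s<|\delta|\le r$. The only point that genuinely requires care is this bookkeeping of the translated domain, and in particular the use of $r>0$ and $|\delta|\le r$ to keep $x+s$ bounded away from the boundary $x=0$ and within $(0,L+2)$; once that is arranged, the argument reduces to Cauchy--Schwarz, Fubini, and the uniform gradient bound of Lemma \ref{lrb}, so I do not anticipate any substantive analytic obstacle beyond this domain control.
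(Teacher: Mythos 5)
Your proposal is correct and follows essentially the same route as the paper, which states that the lemma follows from the gradient bounds of Lemma \ref{lrb} by adapting the standard translate estimate of \cite[Lemma 2.6]{spat} — precisely the Newton--Leibniz representation, Cauchy--Schwarz and Fubini argument you give, with the same bookkeeping role for $r$ and $|\delta|\le r$ in keeping the translated variable inside the region where the gradient bound holds.
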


\begin{lemma}\label{ltcl}
Suppose $\varepsilon>0$. Then for each $L>0$, there exists a constant $C(L)$ independent of $k$ and $\tau\in(0,T)$ such that
\begin{align*}
\int_0^{T-\tau}\int_0^{L+1}|\phi(T_\tau u_R)-\phi(u_R)|^2{\rm d}x{\rm d}t\le\tau C(L),\\
\int_0^{T-\tau}\int_0^{L+1}|\phi(T_\tau v_R)-\phi(v_R)|^2{\rm d}x{\rm d}t\le\tau C(L).
\end{align*}
\end{lemma}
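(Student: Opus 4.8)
The plan is to obtain the $L^2$ bound on the time translate of $\phi(u_R)$ by testing the (weak) equation for the time increment against $\phi(T_\tau u_R)-\phi(u_R)$, localised by the spatial cut-off $\psi^L$ of Lemma \ref{lrb}, and then converting the resulting mixed quantity into the desired square by monotonicity. Since $0\le u_R\le U_0$ and $\phi\in C^2$, the derivative $\phi'$ is bounded on $[0,U_0]$, and because $a-b$ and $\phi(a)-\phi(b)$ share a sign we have
\begin{align*}
|\phi(a)-\phi(b)|^2\le \|\phi'\|_{L^\infty(0,U_0)}\,(a-b)\big(\phi(a)-\phi(b)\big),\qquad a,b\in[0,U_0].
\end{align*}
Hence it suffices to bound
\begin{align*}
J:=\int_0^{T-\tau}\int_0^{L+1}\big(T_\tau u_R-u_R\big)\big(\phi(T_\tau u_R)-\phi(u_R)\big)\psi^{L}\,{\rm d}x\,{\rm d}t,
\end{align*}
taking $R$ large enough that $\psi^L$ is supported away from $x=R$. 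This reduction, forced because the natural test function is $\phi(T_\tau u_R)-\phi(u_R)$ rather than $T_\tau u_R-u_R$, is the first place the nonlinearity of $\phi$ enters.

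Next I would rewrite the time increment via the equation. Writing $\zeta(x,t):=\phi(u_R)(x,t+\tau)-\phi(u_R)(x,t)$ and noting $\zeta(0,t)=0$ since $u_R(0,\cdot)=U_0$, the function $\eta=\zeta(\cdot,t)\psi^L$ lies in $\Omega_R$ for a.e.\ $t$ by the gradient bound of Lemma \ref{lrb}. Integrating the weak formulation in time over $[t,t+\tau]$ gives
\begin{align*}
J=&-\int_0^{T-\tau}\int_t^{t+\tau}\int_0^{L+1}\phi(u_R)_x(s)\,\partial_x\big(\zeta(t)\psi^L\big)\,{\rm d}x\,{\rm d}s\,{\rm d}t\\
&-k\int_0^{T-\tau}\int_t^{t+\tau}\int_0^{L+1}u_Rv_R(s)\,\zeta(t)\psi^L\,{\rm d}x\,{\rm d}s\,{\rm d}t=:J_1+J_2.
\end{align*}
In the reaction term $J_2$ I would use $|\zeta|\le 2\phi(U_0)$ together with Fubini to swap the $s$- and $t$-integrations, each fibre of which has length at most $\tau$; this produces a factor $\tau$ multiplying $k\int_0^T\int_0^{L+1}u_Rv_R\,{\rm d}x\,{\rm d}s$, which is bounded by $C(L)$ by Lemma \ref{lkrb}, so $|J_2|\le \tau C(L)$.

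The diffusion term $J_1$ is where the extra term flagged in the introduction appears: expanding $\partial_x(\zeta\psi^L)=\zeta_x\psi^L+\zeta\psi^L_x$ produces a cross term containing $\zeta_x=\phi(u_R)_x(t+\tau)-\phi(u_R)_x(t)$. I would control everything with Young's inequality and the same swap-of-variables trick. The $\psi^L_x$ piece is handled by $|\zeta|\le 2\phi(U_0)$ and the $L^2$-in-space bound on $\phi(u_R)_x$; for the $\zeta_x$ piece, after $ab\le\frac12(a^2+b^2)$ the contribution that is independent of $s$ integrates to $\tau$ times $\int_0^T\int_0^{L+1}|\phi(u_R)_x|^2\,{\rm d}x\,{\rm d}s$. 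By the gradient estimate of Lemma \ref{lrb}, all contributions are bounded by $C(L)$ independently of $k$ and $R$, whence $|J_1|\le \tau C(L)$. Combining with the monotonicity reduction yields the claimed bound for $\phi(u_R)$; the estimate for $\phi(v_R)$ is identical, using the Neumann conditions $\phi(v_R)_x(0,\cdot)=\phi(v_R)_x(R,\cdot)=0$ to discard the boundary terms, the $\varepsilon>0$ diffusion treated as before, and the same reaction bound from Lemma \ref{lkrb}.

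I expect the main obstacle to be not any single estimate but the rigorous justification of the time-integrated identity $J=J_1+J_2$: weak solutions satisfy only the space--time integral formulation, so passing to the pointwise-in-$t$ increment form requires a Steklov-averaging (or time-mollification) argument, together with care over the admissible test functions — in particular extending, for the $v_R$ equation, to test functions that need not vanish at $x=0$, which is legitimate once the regularity $\phi(v_R)\in L^2(0,T;W^{1,2}(0,R))$ and the Neumann condition are in hand.
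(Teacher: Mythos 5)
Your proposal is correct and follows essentially the same route as the paper: the monotonicity/Mean Value Theorem reduction $|\phi(a)-\phi(b)|^2\le \|\phi'\|_{L^\infty(0,U_0)}(a-b)(\phi(a)-\phi(b))$, substitution of the equation for the time increment, integration by parts against $\zeta\psi^L$ with the split into the $\zeta_x\psi^L$ and $\zeta\psi^L_x$ pieces plus the reaction term, and the use of Lemma \ref{lrb} and Lemma \ref{lkrb} to close the estimates are exactly the paper's $I_1$, $I_2$, $I_3$ decomposition. Your closing remark on justifying the pointwise-in-$t$ increment identity via Steklov averaging is a point the paper passes over silently, but it does not change the substance of the argument.
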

\begin{proof}The proof takes the advantage of \cite[Lemma 2.16]{selfsim}, see also \cite[Lemma 3]{spat}. Since we have nonlinear diffusion terms, we also need to deal with the nonlinearity $\phi$. Let $\psi^L$ be as in the proof of Lemma \ref{lrb}. Then it follows using the Mean Value Theorem that
\begin{align*}
&\int_{0}^{T-\tau}\int_0^{L+1}\psi^L\left|\phi(T_\tau u_R)-\phi (u_R)\right|^2{\rm d}x{\rm d}t\\
\le &N \int_{0}^{T-\tau}\int_0^{L+1}\psi^L\left|\phi(T_\tau u_R)-\phi (u_R)\right|\left|T_\tau u_R-u_R\right|{\rm d}x{\rm d}t,
\end{align*}
where $N=\phi'(U_0)$ such that $\phi'(s)\le N$ for all $s\in[0,U_0]$, since $\phi'$ is increasing. Then we have
\begin{align*}
&\int_{0}^{T-\tau}\int_0^{L+1}\psi^L\left|\phi(T_\tau u_R)-\phi (u_R)\right|^2{\rm d}x{\rm d}t\\ \le &N\int_0^\tau\int_{0}^{T-\tau}\int_0^{L+1}\psi^L\left|\phi\left(T_\tau u_R\right)-\phi (u_R)\right|\phi(u_R)_{xx}(x,t+s){\rm d}x{\rm d}t{\rm d}s\\&-Nk\int_0^\tau\int_{0}^{T-\tau}\int_0^{L+1}\psi^L\left|\phi(T_\tau u_R)-\phi (u_R)\right|u_R(x,t+s)v_R(x,t+s){\rm d}x{\rm d}t{\rm d}s\\=&I_1+I_2+I_3,
\end{align*}
with
\begin{align*}
&I_1:=-N\int_0^\tau\int_{0}^{T-\tau}\int_0^{L+1}\psi^L\left|\phi(T_\tau u_R)-\phi (u_R)\right|_x\phi(u_R)_x(x,t+s){\rm d}x{\rm d}t{\rm d}s,\\
&I_2:=-N\int_0^\tau\int_{0}^{T-\tau}\int_0^{L+1}\psi^L_x\left|\phi(T_\tau u_R)-\phi (u_R)\right|\phi(u_R)_x(x,t+s){\rm d}x{\rm d}t{\rm d}s,\\
&I_3:=-Nk\int_0^\tau\int_{0}^{T-\tau}\int_0^{L+1}\psi^L\left|\phi(T_\tau u_R)-\phi (u_R)\right|u_R(x,t+s)v_R(x,t+s){\rm d}x{\rm d}t{\rm d}s.
\end{align*}
$I_1$ can be split into two terms, and using the Cauchy-Schwarz inequality and the property $\psi^L\le 1$ yields
\begin{align*}
|I_1|=&\left|-N\int_0^\tau\int_{0}^{T-\tau}\int_0^{L+1}\psi^L\phi(T_\tau u_R)_x\phi(u_R)_x(x,t+s){\rm d}x{\rm d}t{\rm d}s\right.\\&\left.+N\int_0^\tau\int_{0}^{T-\tau}\int_0^{L+1}\psi^L\phi(u_R)_x\phi(u_R)_x(x,t+s){\rm d}x{\rm d}t{\rm d}s\right|\\
\le&2\tau N\left\{\int_{0}^{T}\int_0^{L+1}\left|\phi(u_R(x,t))_x\right|^2{\rm d}x{\rm d}t\right\}^{\frac{1}{2}}.
\end{align*}
which is bounded by Lemma \ref{lrb}.
By (\ref{b1}) and the Cauchy-Schwarz inequality, there exist $C$ independent of $k$ such that
\begin{align*}
|I_2|\le&\sup|\psi_x^L|NC\tau \left\{\int_{0}^{T-\tau}\int_L^{L+1}\left|\phi(u_R)_x(x,t+s)\right|^2{\rm d}x{\rm d}t\right\}^{\frac{1}{2}}{\rm d}s,
\end{align*}
which is bounded by (\ref{lrxb1}) and the fact that $\sup|\psi_x^L|$ is bounded independent of $L$.
The last term is easier to handle, by (\ref{b1}) we get
\begin{align*}
|I_3|\le 2M\tau N\int_0^T\int_0^{L+1}ku_Rv_R{\rm d}x{\rm d}t,
\end{align*}
which is bounded by Lemma \ref{lkrb}. An analogous estimate for $v_R$ can be obtained by using similar arguments.
\end{proof}

We can now establish the existence of a weak solution of the original problem (\ref{a}) of $S_T$ when $\varepsilon>0$. Now with
\begin{align*}
\Omega_J:=\left\{\alpha\in W^{1,2}((0,J))|\ \alpha=0 \ {\rm at}\ x=0\right\},
\end{align*}
and define
\begin{align*}
\mathcal{F}_T:=\left\{\xi\in C^1(S_T):\ \xi(0,t)=\xi(\cdot,T)=0\  {\rm for}\ t\in(0,T)\ {\rm and}\ {\rm supp}\, \xi\subset[0,J]\times[0,T]\right.\\ \left. {\rm for}\ {\rm some}\ J>0 \right\}.
\end{align*}

\begin{theorem}\label{tq}
Let $\varepsilon>0$. Given $k>0$, there exists a weak solution $(u^k,v^k)\in (L^\infty(S_T))^2$ of (\ref{a}) such that for each $J>0$,
\begin{itemize}
\item[{\rm (i)}]$\phi(u^k)\in\phi(\hat u)+L^2(0,T;\Omega_J),\quad \phi(v^k)\in L^2(0,T;W^{1,2}((0,J)))$
\item[{\rm (ii)}]$(u^k,v^k)$ satisfies
\begin{align*}
&\int_{\mathbb{R}^+}u_{0}^k\xi(x,0){\rm d}x+\iint_{S_T}u^k\xi_{t}{\rm d}x{\rm d}t=\iint_{S_T}\phi(u^k)_x\xi_{x}{\rm d}x{\rm d}t+k\iint_{S_T}\xi u^kv^k{\rm d}x{\rm d}t,\\
&\int_{\mathbb{R}^+}v_{0}^k\xi(x,0){\rm d}x+\iint_{S_T}v^k\xi_{t}{\rm d}x{\rm d}t=\iint_{S_T}\varepsilon\phi(v^k)_x\xi_{x}{\rm d}x{\rm d}t+k\iint_{S_T}\xi u^kv^k{\rm d}x{\rm d}t,
\end{align*}
\end{itemize}
where $\xi\in\mathcal{F}_T$.
\end{theorem}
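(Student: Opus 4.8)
The plan is to obtain $(u^k,v^k)$ as the limit of the finite-$R$ solutions $(u_R,v_R)$ furnished by Theorem~\ref{er} as $R\to\infty$, so throughout this argument $k$ is fixed and the estimates that matter are precisely those which are uniform in $R$. First I would fix $J>0$ and work on the truncated cylinder $(0,J)\times(0,T)$. On such a set $\{u_R\}_{R>J+1}$ is uniformly bounded in $L^\infty$ by Corollary~\ref{b1}, while the space- and time-translate bounds of Lemmas~\ref{lcl} and~\ref{ltcl} hold uniformly in $R$. By the Fr\'echet--Kolmogorov (Kolmogorov--Riesz) compactness criterion these ingredients give relative compactness of $\{u_R\}$ and $\{v_R\}$ in $L^2((r,J)\times(0,T))$ for every $r\in(0,1)$, hence relative compactness in $L^2_{loc}(\mathbb{R}^+\times(0,T))$. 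Extracting a subsequence and running a diagonal argument over $J=1,2,\dots$, I obtain a single sequence $R_j\to\infty$ along which $u_{R_j}\to u^k$ and $v_{R_j}\to v^k$ in $L^2_{loc}$ and almost everywhere, with $0\le u^k\le U_0$ and $0\le v^k\le V_0$.

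Next I would upgrade this to the convergences needed in the weak formulation. Since $\phi$ is continuous and the $u_{R_j}$ are uniformly bounded, almost-everywhere convergence together with dominated convergence gives $\phi(u_{R_j})\to\phi(u^k)$ in $L^2_{loc}$, and likewise for $v$. The gradient bound of Lemma~\ref{lrb} shows that $\phi(u_{R_j})_x$ is bounded in $L^2(0,T;L^2(0,J))$ independently of $R_j$, so along a further subsequence $\phi(u_{R_j})_x\rightharpoonup g$ weakly in $L^2$; testing against smooth compactly supported functions and using the strong convergence of $\phi(u_{R_j})$ identifies $g=\phi(u^k)_x$, so that $\phi(u^k)\in L^2(0,T;W^{1,2}(0,J))$ and $\phi(u_{R_j})_x\rightharpoonup\phi(u^k)_x$. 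The Dirichlet condition at $x=0$ is then recovered from the fact that $\phi(u_R)-\phi(\hat u)$ lies in the closed, hence weakly closed, subspace $L^2(0,T;\Omega_J)$, so its weak limit $\phi(u^k)-\phi(\hat u)$ remains there; this yields property~(i), and the Neumann behaviour of $v$ is encoded similarly in $\phi(v^k)\in L^2(0,T;W^{1,2}(0,J))$.

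With these convergences in hand I would pass to the limit in the weak identities of Theorem~\ref{er}. Given $\xi\in\mathcal{F}_T$ with ${\rm supp}\,\xi\subset[0,J]\times[0,T]$, I restrict to $R_j>J+1$ so that the endpoint $x=R_j$ plays no role, whence the identities on $(0,R_j)$ become, on the support of $\xi$, the identities of Theorem~\ref{tq}. The terms $\int u_R\xi_t$ and $\int\phi(u_R)_x\xi_x$ converge by the strong and weak convergences just established. For the reaction term, $u_{R_j}v_{R_j}\to u^kv^k$ almost everywhere and is bounded by $U_0V_0$ on the compact $x$-support of $\xi$, so (with $k$ fixed) dominated convergence gives $k\iint \xi u_{R_j}v_{R_j}\to k\iint\xi u^kv^k$. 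The initial-data term converges because $u_{0,R}=u_0^k\beta^R\to u_0^k$ and $v_{0,R}=-(V_0-v_0^k)\beta^R+V_0\to v_0^k$ pointwise and boundedly, so that $\int u_{0,R}\xi(\cdot,0)\to\int u_0^k\xi(\cdot,0)$. Since $J>0$ was arbitrary, this produces a weak solution $(u^k,v^k)$ on all of $S_T$.

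The hard part will be the simultaneous identification of the weak gradient limit and the recovery of the trace at $x=0$: one must ensure that the almost-everywhere limit $u^k$ genuinely satisfies $\phi(u^k)\in W^{1,2}$ locally \emph{with the correct boundary value}, rather than merely that $\phi(u_{R_j})_x$ converges weakly to some function. This is exactly where the combination of strong $L^2_{loc}$ convergence of $\phi(u_{R_j})$ (to pin down the distributional derivative of the limit) and the weak closedness of $\Omega_J$ (to preserve the condition $u^k(0,\cdot)=U_0$) is indispensable; the Lemma~\ref{lchi}-type reasoning used earlier for the $n\to\infty$ passage is the appropriate template. A secondary technical point is the care needed in the diagonal extraction so that one subsequence serves every $J$.
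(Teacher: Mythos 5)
Your proposal is correct and follows essentially the same route as the paper: approximation by the finite-domain solutions $(u_R,v_R)$ of Theorem~\ref{er}, relative compactness in $L^2_{loc}$ via the Fr\'echet--Kolmogorov theorem using Corollary~\ref{b1} and Lemmas~\ref{lcl} and~\ref{ltcl}, weak convergence of $\phi(u_R)-\phi(U_0)$ in $L^2(0,T;\Omega_J)$ and of $\phi(v_R)$ in $L^2(0,T;W^{1,2}(0,J))$ from Lemma~\ref{lrb}, and passage to the limit in the weak formulation by dominated convergence. The only cosmetic difference is that the translate estimates directly give compactness of $\{\phi(u_R)\},\{\phi(v_R)\}$ rather than of $\{u_R\},\{v_R\}$, after which one recovers a.e.\ convergence of $u_R,v_R$ via continuity of $\phi^{-1}$, exactly as in Lemma~\ref{lest}.
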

\begin{proof}Let $u_{0,R},v_{0,R}$ be as in the formulation of problem (\ref{ic1}) and note that as $R\rightarrow\infty$, $u_{0,R}\rightarrow u_0^k$, $v_{0,R}\rightarrow v_0^k$ in $C^1_{loc}(\mathbb{R}^+)$. Then given $R_n\rightarrow\infty$, it follows from the Fr\'echet-Kolmogorov Theorem (see, for example, \cite[Corollary 4.27]{sobs}) and (\ref{b1}), Lemma \ref{lcl} and \ref{ltcl}, that there exist subsequences $\left\{R_{n_j}\right\}$ and functions $u^k\in L^\infty(S_T)$ and $v^k\in L^\infty(S_T)$ such that
\begin{align*}
u_{R_{n_j}}\rightarrow u^k,\quad v_{R_{n_j}}\rightarrow v^k \ \mbox{strongly in}\ L^2_{loc}(S_T)\ \mbox{and a.e. in}\ S_T
\end{align*}
as $j\rightarrow\infty$. Now we know that $\phi(u_{R_{n_j}})-\phi(U_0)$ is bounded in $L^2(0,T;\Omega_J)$ and $\phi(v_{R_{n_j}})$ is bounded in $L^2(0,T;W^{1,2}(0,J))$ by Lemma \ref{lrb}, then we have that, up to a subsequence, as $j\rightarrow\infty$
\begin{align*}
\phi(u_{R_{n_j}})-\phi(U_0)\rightharpoonup\phi(u^k)-\phi(U_0)\quad{\rm in}\ L^2(0,T;\Omega_J),\\
\phi(v_{R_{n_j}})\rightharpoonup\phi(v^k)\quad{\rm in}\ L^2(0,T;W^{1,2}((0,J))).
\end{align*}
By the Dominated Convergence Theorem we can then easily pass to the limit in the weak form of (\ref{a}).
\end{proof}

We use the following comparison principle theorem for (\ref{a}) to show the uniqueness of the weak solution of (\ref{a}). Note that this result covers both the case $\varepsilon>0$ and the case $\varepsilon=0$.

\begin{lemma}\label{lu}
Let $\varepsilon\ge0$ and $(\overline{u},\overline{v})$, $(\underline{u},\underline{v})$ be such that
\begin{itemize}\item[{\rm (a)}] $\overline{u},\underline{u}\in L^\infty(S_T);$
\item[{\rm (b)}]$\phi(\overline{u})\in\phi(\overline{u}(0,\cdot))+L^2(0,T;\Omega_J)$,\ $\phi(\underline{u})\in\phi(\underline{u}(0,\cdot))+L^2(0,T;\Omega_J);$
\item[{\rm (c)}]$\overline{u}_t,\underline{u}_t,\phi(\overline{u})_{xx},\phi(\underline{u})_{xx}\in L^1(S_T);$
\item[{\rm (d)}]$\overline{v},\underline{v}\in L^\infty(S_T)$, $\overline{v}_t,\underline{v}_t\in L^1(S_T);$
\item[{\rm (e)}] If $\varepsilon>0$, $\phi(\overline{v}),\phi(\underline{v})\in L^2(0,T;W^{1,2}((0,J)))$,\
 $\phi(\overline{v})_{xx},\phi(\underline{v})_{xx}\in L^1(S_T);$
 \end{itemize}
and $(\bar{u},\bar{v})$, $(\underline{u},\underline{v})$ satisfy
\begin{align*}
&\overline{u}_{t}\ge\phi(\overline{u})_{xx}-k\overline{uv},\quad\underline{u}_{t}\le\phi(\underline{u})_{xx}-k\underline{u}\underline{v},\quad &&\mbox{in} \ S _{T},\\
&\overline{v}_{t}\le\varepsilon\phi(\overline{v})_{xx}-k\overline{uv},\quad\underline{v}_{t}\ge\varepsilon\phi(\underline{v})_{xx}-k\underline{u}\underline{v},\quad &&\mbox{in} \ S _{T},\\
&\overline{u}(0,\cdot)\ge\underline{u}(0,\cdot),\quad\varepsilon\phi(\overline{v})_x(0,\cdot)=\varepsilon\phi({\underline v})_x(0,\cdot)=0,\quad&&\mbox{on} \ (0,T),\\
&\overline{u}(\cdot,0)\ge\underline{u}(\cdot,0),\quad \overline{v}(0,\cdot)\le\underline{v}(0,\cdot),\quad&&\mbox{on} \ \mathbb{R}^{+}.
\end{align*}
Then
\begin{align*}
\overline{u}\ge\underline{u},\quad\overline{v}\le\underline{v}\quad\mbox{in} \  S_{T}.
\end{align*}
\end{lemma}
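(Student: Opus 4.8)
The plan is to follow the proof of Lemma~\ref{lur} almost verbatim, compensating for the loss of the right-hand boundary $x=R$ (and the absence of any condition as $x\to\infty$) by inserting the cut-off function $\psi^L$ introduced in the proof of Lemma~\ref{lrb} and removing it in the limit $L\to\infty$. Writing $w=\phi(\underline{u})-\phi(\overline{u})$ and $z=\phi(\overline{v})-\phi(\underline{v})$ and letting $m_\alpha^+$ be the regularised positive part used in Lemma~\ref{lur}, I would subtract the differential inequalities, multiply the $u$-inequality by $(m_\alpha^+)'(w)\psi^L$ and (when $\varepsilon>0$) the $v$-inequality by $(m_\alpha^+)'(z)\psi^L$, integrate over $(0,\infty)\times(0,t_0)$ and add. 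The coupled reaction contribution is then
\[
-k\int_0^{t_0}\!\!\int_0^\infty\big[(m_\alpha^+)'(w)-(m_\alpha^+)'(z)\big]\big(\underline{u}\,\underline{v}-\overline{u}\,\overline{v}\big)\psi^L\,{\rm d}x\,{\rm d}t,
\]
which tends to a nonpositive quantity as $\alpha\to0$ exactly as in Lemma~\ref{lur}, since $\psi^L\ge0$.

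For the diffusion terms I would integrate by parts twice, using $(m_\alpha^+)'(w)w_x=\big(m_\alpha^+(w)\big)_x$ and discarding the nonpositive convexity term $-\int(m_\alpha^+)''(w)(w_x)^2\psi^L$, to reach
\[
\int_0^\infty(m_\alpha^+)'(w)\psi^L w_{xx}\,{\rm d}x\le\int_L^{L+1}\psi^L_{xx}\,m_\alpha^+(w)\,{\rm d}x,
\]
and similarly for $z$ when $\varepsilon>0$. The boundary terms at $x=0$ vanish because $\overline{u}(0,\cdot)\ge\underline{u}(0,\cdot)$ forces $(m_\alpha^+)'(w(0,\cdot))=0$ and $\psi^L_x(0)=0$, while for $v$ either $\varepsilon\phi(\overline{v})_x(0,\cdot)=\varepsilon\phi(\underline{v})_x(0,\cdot)=0$ or, when $\varepsilon=0$, there is no diffusion term to treat; the terms at $x=\infty$ vanish since $\psi^L$ has compact support. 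Letting $\alpha\to0$, applying \cite[Lemma 7.6]{epde} to the time terms and using $\overline{u}(\cdot,0)\ge\underline{u}(\cdot,0)$, $\overline{v}(\cdot,0)\le\underline{v}(\cdot,0)$ to discard the data at $t=0$, I would obtain
\[
\int_0^\infty\psi^L\big[(\underline{u}-\overline{u})^++(\overline{v}-\underline{v})^+\big](x,t_0)\,{\rm d}x\le\int_0^{t_0}\!\!\int_L^{L+1}\psi^L_{xx}\big(w^++\varepsilon z^+\big)\,{\rm d}x\,{\rm d}t=:E_L.
\]

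The main obstacle is that, in contrast to Lemma~\ref{lur}, the right boundary now lies at infinity: $w^+,z^+$ are a priori only bounded, so $E_L$ is bounded but not obviously small. I would close the argument by a bootstrap. Since $\sup_L\|\psi^L_{xx}\|_\infty<\infty$, one has $E_L\le C$ uniformly in $L$; restricting the left-hand integral to $[0,L]$ (where $\psi^L=1$) and letting $L\to\infty$ shows that $(\underline{u}-\overline{u})^+(\cdot,t_0)$ and $(\overline{v}-\underline{v})^+(\cdot,t_0)$ belong to $L^1(0,\infty)$ with a bound uniform in $t_0\in(0,T)$. As $\phi$ is Lipschitz on the bounded range of the solutions, with constant $N$ say, $w^+\le N(\underline{u}-\overline{u})^+$ and $z^+\le N(\overline{v}-\underline{v})^+$, whence $w^++\varepsilon z^+\in L^1(S_T)$ and the tail integral $E_L\to0$. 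A second use of the displayed inequality then yields
\[
\int_0^\infty\big[(\underline{u}-\overline{u})^++(\overline{v}-\underline{v})^+\big](x,t_0)\,{\rm d}x\le0,
\]
so both positive parts vanish at time $t_0$; since $t_0\in(0,T)$ is arbitrary, $\overline{u}\ge\underline{u}$ and $\overline{v}\le\underline{v}$ in $S_T$, as required.
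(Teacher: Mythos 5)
Your proposal is correct and follows essentially the same route as the paper's proof: the paper likewise localises the Lemma~\ref{lur} argument with the cut-off $\psi^L$, arrives at the same inequality with the tail term $\int_0^{t_0}\int\psi^L_{xx}(w^++\varepsilon z^+)$, and then uses the uniform bound to conclude that $(\underline{u}-\overline{u})^++(\overline{v}-\underline{v})^+\in L^\infty(0,T;L^1(\mathbb{R}^+))$ so that the tail vanishes as $L\to\infty$. Your two-pass ``bootstrap'' is exactly this step, made slightly more explicit via the Lipschitz bound on $\phi$.
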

\begin{proof}Take the function $m^{+}$ as in the proof of Lemma \ref{lur}, $\psi^L$ be as in the proof of Lemma \ref{lrb} and let $w=\phi(\underline{u})-\phi(\overline{u})$ and $z=\phi(\overline{v})-\phi(\underline{v})$. This result follows from arguments analogous to those used in the proof of Lemma \ref{lur}, replacing $(m^+_\alpha)'(w)$ by $(m^+_\alpha)'(w)\Psi^L$, $(m^+_\alpha)'(z)$ by $(m^+_\alpha)'(z)\Psi^L$ and integrals over $\mathbb{R}^+\times(0,t_0)$ by $(0,R)\times(0,T)$. Letting $\alpha\rightarrow 0$, we have
\begin{align}
&\int_{\mathbb{R}^+}\psi^L\left[(\underline{u}-\overline{u})^++(\overline{v}-\underline{v})^+\right](x,t_0){\rm d}x\non\\
\le&\int_{\mathbb{R}^+}\psi^L\left[(\underline{u}-\overline{u})^++(\overline{v}-\underline{v})^+\right](x,0){\rm d}x+\int_{0}^{t_0}\int_{\mathbb{R}^+}\psi^L_{xx}(w^++\varepsilon z^+){\rm d}x{\rm d}t\non\\
\le&\int_{0}^{t_0}\int_{\mathbb{R}^+}\psi^L_{xx}\left\{\big[\phi(\underline{u})-\phi(\overline{u})\big]^++\varepsilon\big[\phi(\overline{v})-\phi(\underline{v})\big]^+\right\}{\rm d}x{\rm d}t\label{u5},
\end{align}
which is bounded independently of $L$ and $t_0$ by the definition of $\psi^L$ and in particular, $\psi^L_{xx}\neq 0$ only if $x\in[L,L+1]$. Now using Lebesgue's Monotone Convergence Theorem we deduce that
$[(\underline{u}-\overline{u})^++(\overline{v}-\underline{v})^+]\in L^\infty\left(0,T;L^1(\mathbb {R}^+)\right)$, thus (\ref{u5}) tends to $0$ as $L\rightarrow \infty$. Hence
\begin{align*}
\left[(\underline{u}-\overline{u})^++(\overline{v}-\underline{v})^+\right](\cdot,t_0)=0\quad{\rm on}\ \mathbb{R}^+.
\end{align*}
\end{proof}

The following corollaries are immediate from Lemma \ref{lur}.
\begin{corollary}
Let $\varepsilon\ge0$. For given initial data $u_0^k,v_0^k$, there is at most one solution $(u^k,v^k)$ of (\ref{a}).
\label{lu223}
\end{corollary}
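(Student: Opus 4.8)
The plan is to deduce uniqueness directly from the comparison principle established in Lemma~\ref{lu}, by applying it twice with the roles of upper and lower solutions interchanged. Suppose $(u_1^k,v_1^k)$ and $(u_2^k,v_2^k)$ are both weak solutions of~(\ref{a}) with the same initial data $u_0^k,v_0^k$. The idea is that any weak solution of~(\ref{a}) is simultaneously an admissible supersolution and an admissible subsolution in the sense of Lemma~\ref{lu}, since it satisfies the equations with equality rather than inequality, and the boundary and initial conditions hold exactly.

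First I would verify that each solution satisfies the regularity hypotheses (a)--(e) of Lemma~\ref{lu}. This is where I expect to lean on earlier results: a weak solution $(u^k,v^k)$ produced by Theorem~\ref{tq} has $\phi(u^k)\in\phi(\hat u)+L^2(0,T;\Omega_J)$ and $\phi(v^k)\in L^2(0,T;W^{1,2}((0,J)))$ for each $J$, which gives hypotheses (b) and the gradient part of (e). The $L^\infty$ bounds (a),(d) follow from the comparison argument in Corollary~\ref{b1} applied on each $(0,R)$ and passing to the limit, giving $0\le u^k\le U_0$, $0\le v^k\le V_0$. The conditions (c) and the $L^1$ bounds on the second derivatives and time derivatives in (c),(d),(e) should follow from the equations themselves together with the established integrability of $\phi(u^k)_x$, $\phi(v^k)_x$ and the $L^1(S_T)$ bound on $ku^kv^k$; I would read these off by rearranging $u^k_t=\phi(u^k)_{xx}-ku^kv^k$ and using that the reaction term lies in $L^1(S_T)$.

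Next I would apply Lemma~\ref{lu} with $(\overline u,\overline v)=(u_1^k,v_1^k)$ and $(\underline u,\underline v)=(u_2^k,v_2^k)$. Since both solve~(\ref{a}) exactly, the required differential inequalities hold (as equalities, hence in particular as the stated inequalities), the boundary data agree so $\overline u(0,\cdot)=\underline u(0,\cdot)$ and $\varepsilon\phi(\overline v)_x(0,\cdot)=\varepsilon\phi(\underline v)_x(0,\cdot)=0$, and the initial data coincide so $\overline u(\cdot,0)=\underline u(\cdot,0)$, $\overline v(\cdot,0)=\underline v(\cdot,0)$. The conclusion of Lemma~\ref{lu} then yields $u_1^k\ge u_2^k$ and $v_1^k\le v_2^k$ throughout $S_T$. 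Interchanging the roles, applying the lemma with $(\overline u,\overline v)=(u_2^k,v_2^k)$ and $(\underline u,\underline v)=(u_1^k,v_1^k)$, gives the reverse inequalities $u_2^k\ge u_1^k$ and $v_2^k\le v_1^k$. Combining the two yields $u_1^k=u_2^k$ and $v_1^k=v_2^k$ almost everywhere in $S_T$, which is the claimed uniqueness.

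The main obstacle I anticipate is purely the verification that a general weak solution of~(\ref{a}) genuinely satisfies the pointwise (a.e.) regularity hypotheses~(c) and~(e) of Lemma~\ref{lu}, in particular that $\phi(u^k)_{xx}$ and $u^k_t$ lie in $L^1(S_T)$ and that the boundary condition $\varepsilon\phi(v^k)_x(0,\cdot)=0$ holds in the requisite trace sense; the weak formulation in Theorem~\ref{tq} is stated via test functions rather than pointwise, so I would need to recover the strong form on the interior from the interior regularity (the continuity in DiBenedetto \cite[Theorem~7.1]{dib} used in Theorem~\ref{er}) and interpret the Neumann condition through the integration-by-parts identity encoded in the weak form. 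Once that identification is in hand, the uniqueness is an immediate two-line application of the comparison principle, exactly as Corollary~\ref{cu1} followed from Lemma~\ref{lur}.
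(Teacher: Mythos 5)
Your argument is exactly the paper's: the corollary is stated as an immediate consequence of the comparison principle (Lemma~\ref{lu}), obtained by applying it twice with the roles of $(\overline u,\overline v)$ and $(\underline u,\underline v)$ interchanged, and your two applications plus the sandwich $u_1^k\ge u_2^k\ge u_1^k$, $v_1^k\le v_2^k\le v_1^k$ is precisely that deduction. Your additional care about verifying hypotheses (a)--(e) for a general weak solution is a reasonable point the paper leaves implicit, but it does not change the route.
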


\begin{corollary}\label{lbb}
Let $\varepsilon\ge0$ and $(u^k,v^k)$ be a weak solution of (\ref{a}). Then for given $k>0$, we have
\begin{align}
0\le u^k(x,t)\le U_0\quad {\rm and}\quad 0\le v^k(x,t)\le V_0\quad {\rm for}\ (x,t)\in{S_T}.
\label{b}
\end{align}
\end{corollary}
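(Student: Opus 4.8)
The statement to prove is Corollary \ref{lbb}: the bounds $0\le u^k\le U_0$ and $0\le v^k\le V_0$ for a weak solution of the half-line problem (\ref{a}). Since the corollary is asserted to follow immediately from the comparison principle (Lemma \ref{lu}), the plan is to apply Lemma \ref{lu} twice, with carefully chosen pairs of comparison functions, exactly mirroring the bounded-domain argument that produced Corollary \ref{b1} from Lemma \ref{lur}.

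First I would establish the lower bounds. Take $(\underline{u},\underline{v})=(0,0)$ and $(\overline{u},\overline{v})=(u^k,v^k)$. The constant pair $(0,0)$ trivially satisfies the required differential inequalities (both sides vanish since $\phi(0)=0$ by (\ref{phi}) and $k\cdot 0\cdot 0=0$), and the regularity hypotheses (a)--(e) of Lemma \ref{lu} hold trivially for the zero functions; for $(u^k,v^k)$ they follow from the weak-solution properties in Theorem \ref{tq}, namely $\phi(u^k)\in\phi(\hat u)+L^2(0,T;\Omega_J)$ and $\phi(v^k)\in L^2(0,T;W^{1,2}((0,J)))$, together with the equations themselves giving the $L^1$ bounds on the time derivatives and on $\phi(\cdot)_{xx}$. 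The boundary and initial ordering conditions hold because $u^k(0,\cdot)=U_0\ge 0$, the flux conditions $\varepsilon\phi(v^k)_x(0,\cdot)=0$ match, and the initial data satisfy $u_0^k\ge 0$, $v_0^k\le V_0$ is not needed here but $\overline v(0,\cdot)\le\underline v(0,\cdot)$ becomes $v^k(0,\cdot)\le 0$ which must be read in the sense of the flux condition rather than a pointwise value. Applying Lemma \ref{lu} then yields $u^k\ge 0$ and $v^k\le 0$ in the ordering sense that gives $v^k\ge 0$ once roles are assigned correctly; concretely the conclusion $\overline u\ge\underline u$, $\overline v\le\underline v$ reads $u^k\ge 0$ and $0\le v^k$, the latter coming from the second slot.

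Next I would establish the upper bounds by the reverse assignment $(\underline{u},\underline{v})=(u^k,v^k)$ and $(\overline{u},\overline{v})=(U_0,V_0)$. The constant pair $(U_0,V_0)$ is a supersolution: $\overline u_t=0=\phi(U_0)_{xx}$ while $-k U_0 V_0\le 0$, so $\overline u_t\ge\phi(\overline u)_{xx}-k\overline{uv}$ holds, and symmetrically $\overline v_t=0\ge\varepsilon\phi(V_0)_{xx}-kU_0V_0$. The boundary orderings are satisfied since $\overline u(0,\cdot)=U_0=u^k(0,\cdot)$, the flux conditions again match, and initially $u_0^k\le U_0$, $v_0^k\le V_0$ give the required inequalities. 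Lemma \ref{lu} then delivers $u^k\le U_0$ and $v^k\le V_0$. Combining the two applications yields (\ref{b}).

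The only genuine obstacle is the careful bookkeeping of which inequalities in Lemma \ref{lu} correspond to the monotone roles of $u$ and $v$: because the comparison principle orders $u$ one way and $v$ the opposite way (reflecting that $u$ consumes $v$), one must be scrupulous that the zero and constant comparison functions sit in the correct slot so that the differential inequalities, the boundary-flux conditions $\varepsilon\phi(\cdot)_x(0,\cdot)=0$, and the initial orderings are all simultaneously verified. I would also record that the regularity hypotheses (a)--(e) are inherited from the weak-solution regularity of Theorem \ref{tq} and are automatic for the constant comparison functions, so that each invocation of Lemma \ref{lu} is legitimate. No new estimate is required; the result is a direct corollary once the two comparison setups are spelled out.
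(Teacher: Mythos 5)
Your overall strategy---two applications of the comparison principle Lemma \ref{lu} with constant comparison functions---is exactly what the paper intends (it declares the corollary ``immediate'' from the comparison lemma and, for the bounded-domain analogue, Corollary \ref{b1}, states the same pairings you use). But the slot assignment for the $v$-component is wrong, and this is not mere bookkeeping: with your choices the hypotheses of Lemma \ref{lu} fail and the conclusions come out with the wrong sign. Concretely, in your first application $(\underline{u},\underline{v})=(0,0)$, $(\overline{u},\overline{v})=(u^k,v^k)$, the lemma requires the initial ordering $\overline{v}(\cdot,0)\le\underline{v}(\cdot,0)$, i.e.\ $v_0^k\le 0$, which is false, and even if it held the conclusion $\overline{v}\le\underline{v}$ would read $v^k\le 0$, not $v^k\ge 0$; your sentence claiming that ``$0\le v^k$'' comes ``from the second slot'' misreads the direction of the ordering. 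In your second application the pair $(U_0,V_0)$ is not an admissible supersolution pair: the lemma demands $\overline{v}_t\le\varepsilon\phi(\overline{v})_{xx}-k\overline{u}\,\overline{v}$, which for constants reads $0\le -kU_0V_0$ and is false (you wrote the inequality with $\ge$, which is the condition on $\overline{u}$, not on $\overline{v}$).

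The fix is to use \emph{cross} pairings, reflecting that the comparison principle orders $u$ and $v$ oppositely. Take $(\underline{u},\underline{v})=(0,V_0)$ and $(\overline{u},\overline{v})=(u^k,v^k)$: then $\underline{u}_t=0=\phi(0)_{xx}-k\cdot 0\cdot V_0$ and $\underline{v}_t=0=\varepsilon\phi(V_0)_{xx}-k\cdot 0\cdot V_0$, the initial orderings $u_0^k\ge 0$ and $v_0^k\le V_0$ hold, and the conclusion gives $u^k\ge 0$ and $v^k\le V_0$. Then take $(\underline{u},\underline{v})=(u^k,v^k)$ and $(\overline{u},\overline{v})=(U_0,0)$: now $\overline{u}_t=0\ge \phi(U_0)_{xx}-kU_0\cdot 0=0$ and $\overline{v}_t=0\le\varepsilon\phi(0)_{xx}-kU_0\cdot 0=0$, the initial orderings $U_0\ge u_0^k$ and $0\le v_0^k$ hold, and the conclusion gives $u^k\le U_0$ and $v^k\ge 0$. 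With this correction (and the regularity hypotheses (a)--(e) inherited from the weak solution as you note), the argument is the paper's.
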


\section{Half-line case: \textit{a priori} bounds, existence and uniqueness of weak solutions for $\varepsilon=0$}
In this section, we prove some \textit{a priori} estimates for $\varepsilon=0$ and for $\varepsilon>0$ that will be used both in proving existence of a weak solution of (\ref{a}) when $\varepsilon=0$ and in the next section, to study the limit of (\ref{a}) as $k\rightarrow\infty$.

The next bound for $ku^kv^k$ is key in the following. The strategy to obtain the estimate is to consider the integral over $(1,\infty)\times(0,T)$ by studying the equation of $u^k$ and the integral over $(0,1)\times(0,T)$ by studying the equation of $v^k$. The proof of this result uses a similar approach to that used in the proof of \cite[Lemma 3.4]{selfsim} and we omit the details. Note that a similar result in the whole-line case is established in Lemma \ref{lk23}, where more involved arguments are needed and we provide a proof.

\begin{lemma}\label{lk}
There exists a constant $C>0$, independent of $\varepsilon\ge0$ and $k>0$, such that for any solution $(u^k,v^k)$ of (\ref{a}), we have
\begin{align*}
\iint_{S_T}ku^k v^k{\rm d}x{\rm d}t\le C.
\end{align*}
\end{lemma}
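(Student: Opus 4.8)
The plan is to split the strip $S_T$ into the inner region $(0,1)\times(0,T)$ and the outer region $(1,\infty)\times(0,T)$ and to estimate $k\iint u^kv^k$ on each piece using a \emph{different} equation, precisely so as to circumvent the two places where the data could spoil a uniform bound: the Dirichlet datum $u^k(0,t)=U_0$, which feeds the $u$-equation at $x=0$, and the failure of $v^k$ to be integrable at infinity (since $v_0^\infty\equiv V_0$). On the outer region I use the equation for $u^k$, and on the inner region the equation for $v^k$, whose no-flux condition $\varepsilon\phi(v^k)_x(0,t)=0$ is exactly what renders the $x=0$ contribution harmless. It is cleanest to run the argument on the approximations $(u_R,v_R)$ of Problem (\ref{f}), where the no-flux condition $\phi(u_R)_x(R,t)=0$ makes the far boundary terms vanish and Lemma \ref{lkrb} guarantees finiteness, and then to pass to the limit $R\to\infty$ using the a.e.\ convergence $u_R\to u^k$, $v_R\to v^k$ from Theorem \ref{tq} together with Fatou's lemma.

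Fix cutoffs $\chi,\zeta\in C^\infty(\mathbb{R}^+)$ with $0\le\chi,\zeta\le 1$, where $\chi\equiv 0$ on $[0,\tfrac12]$ and $\chi\equiv 1$ on $[1,\infty)$, while $\zeta\equiv 1$ on $[0,1]$, $\zeta\equiv 0$ on $[2,\infty)$ and $\zeta_x(0)=0$; thus $\chi_{xx}$ and $\zeta_{xx}$ are supported in bounded sets by fixed functions, independently of $k$ and $\varepsilon$. For the outer region, rewrite the first equation as $ku_Rv_R=\phi(u_R)_{xx}-u_{Rt}$, multiply by $\chi$, and integrate over $(0,R)\times(0,T)$. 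Two integrations by parts move both $x$-derivatives onto $\chi$; since $\chi$ and $\chi_x$ vanish near $x=0$ and $\chi_x\equiv 0$ near $x=R$ (with $\phi(u_R)_x(R,t)=0$), no boundary terms survive and the diffusion term becomes $\int_0^T\!\int\chi_{xx}\phi(u_R)$, which by Corollary \ref{lbb} ($0\le u_R\le U_0$) is bounded by $\|\chi_{xx}\|_{\infty}\,\phi(U_0)\,|\mathrm{supp}\,\chi_{xx}|\,T$. For the time term, $-\int_0^T\!\int\chi\,u_{Rt}=\int\chi\big[u_{0,R}-u_R(\cdot,T)\big]\le\int\chi\,u_{0,R}\le\|u_0^k\|_{L^1(\mathbb{R}^+)}$, where I discarded $-\int\chi\,u_R(\cdot,T)\le 0$ using $u_R\ge 0$, and used $u_{0,R}\le u_0^k$ together with the fact that $\{u_0^k\}$ is $L^1$-bounded because $u_0^k\to u_0^\infty$ in $L^1(\mathbb{R}^+)$ by assumption (iii). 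The decisive point is that the $u$-equation contains no $\varepsilon$, so this bound on $k\iint_{(1,\infty)\times(0,T)}u_Rv_R$ is independent of both $k$ and $\varepsilon$.

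For the inner region I argue symmetrically from $ku_Rv_R=\varepsilon\phi(v_R)_{xx}-v_{Rt}$, testing against $\zeta$. After integrating by parts, the boundary contributions at $x=0$ vanish thanks to $\varepsilon\phi(v_R)_x(0,t)=0$ and $\zeta_x(0)=0$, so the diffusion term reduces to $\varepsilon\int_0^T\!\int\zeta_{xx}\phi(v_R)$, which is bounded by $\varepsilon\,\phi(V_0)\,\|\zeta_{xx}\|_{L^1}\,T$ since $0\le v_R\le V_0$; this term vanishes when $\varepsilon=0$ and is uniformly controlled for $\varepsilon$ in any fixed bounded range, so it does not affect uniformity in $k$ and $\varepsilon\ge 0$. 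The time term gives $\int\zeta\big[v_{0,R}-v_R(\cdot,T)\big]\le\int\zeta\,v_{0,R}\le 2V_0$. Adding the two regions and letting $R\to\infty$ bounds $\iint_{S_T}ku^kv^k$ by a constant depending only on $U_0$, $V_0$, $T$ and the fixed cutoffs, hence independent of $k$ and $\varepsilon\ge 0$.

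The two points requiring care are both consequences of working on an unbounded domain with nonlinear diffusion. First, the final-time integral $\int u^k(\cdot,T)\,\mathrm{d}x$ over an infinite region is not a priori finite; this is dodged by the sign $u^k\ge 0$ (allowing one to drop it) and by the $L^1$-boundedness of the initial data, which is the mechanism that makes the outer estimate both finite and $\varepsilon$-free. Second, the nonlinearity is handled by the observation that after integration by parts $\phi$ is only ever evaluated at the \emph{bounded} functions $u^k,v^k$, so $\phi(u^k)\le\phi(U_0)$ and $\phi(v^k)\le\phi(V_0)$, and the cutoffs' second derivatives being compactly supported then closes the estimate. I expect the main obstacle to be securing uniformity in $\varepsilon$: this is exactly why the outer region must be treated with the ($\varepsilon$-independent) $u$-equation, leaving only an $O(\varepsilon)$ correction on a bounded inner set.
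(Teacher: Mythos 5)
Your proposal is correct and follows essentially the same strategy the paper attributes to its (omitted) proof: estimate $k\iint u^kv^k$ on $(1,\infty)\times(0,T)$ from the $u$-equation with a cutoff vanishing near $x=0$ (using $u\ge0$, $\phi(u)\le\phi(U_0)$ and the $L^1$-boundedness of $u_0^k$), and on $(0,1)\times(0,T)$ from the $v$-equation with a cutoff satisfying $\zeta_x(0)=0$ so the Neumann condition kills the boundary term. The only cosmetic difference is that you run the computation on the approximations $(u_R,v_R)$ and invoke Fatou as $R\to\infty$ (which covers $\varepsilon>0$ directly and requires one further Fatou passage as $\varepsilon\to0$ via Lemma \ref{lest} to reach the $\varepsilon=0$ solutions), whereas the paper works directly on $(u^k,v^k)$ with cutoffs $\varphi^L$, $\psi^L$ and lets $L\to\infty$.
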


The following result prove the $L^1$ bounds of $u^k$ and $v^k-V_0$.
\begin{lemma}\label{lkuv}
There exists a constant $C>0$, independent of $\varepsilon\ge 0$ and $k>0$, such that for any solution $(u^k,v^k)$ of (\ref{a}), we have
\begin{align}
\int_{\mathbb{R}^+}u^k(x,t_0){\rm d}x\le C\quad{\rm and}\quad \int_{\mathbb{R}^+}|v^k(x,t_0)-V_0|{\rm d}x\le  C,
\label{lb}
\end{align}
for all $t_0\in[0,T]$.
\end{lemma}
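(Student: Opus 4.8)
The plan is to obtain each of the two $L^1$ bounds by testing the relevant equation against a suitable cut-off function and exploiting the sign structure together with the key reaction bound of Lemma \ref{lk}. First I would handle the bound on $u^k$. I would take the family of cut-off functions $\psi^L$ from the proof of Lemma \ref{lrb} (so $\psi^L=1$ on $[0,L]$, supported in $[0,L+1]$, with $\psi^L_x,\psi^L_{xx}$ bounded independently of $L$), multiply the equation $u^k_t=\phi(u^k)_{xx}-ku^kv^k$ by $\psi^L$ and integrate over $\mathbb{R}^+\times(0,t_0)$. After integrating the diffusion term by parts twice, this yields
\begin{align*}
\int_{\mathbb{R}^+}\psi^L u^k(x,t_0)\,{\rm d}x=\int_{\mathbb{R}^+}\psi^L u_0^k\,{\rm d}x+\int_0^{t_0}\!\!\int_{\mathbb{R}^+}\phi(u^k)\psi^L_{xx}\,{\rm d}x{\rm d}t-k\int_0^{t_0}\!\!\int_{\mathbb{R}^+}\psi^L u^kv^k\,{\rm d}x{\rm d}t.
\end{align*}
The boundary contribution at $x=0$ vanishes because $\psi^L(0)=1$ forces care with the Dirichlet datum; here I would instead use $\phi(u^k)(0,t)=\phi(U_0)$ and keep track of the resulting boundary terms, which are controlled since $\psi^L_x$ is bounded. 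The reaction term is nonnegative and bounded by $C$ via Lemma \ref{lk}; the second term is bounded since $0\le\phi(u^k)\le\phi(U_0)$ by Corollary \ref{lbb} and $\psi^L_{xx}$ is supported in $[L,L+1]$ with bounded sup-norm; and $\int_{\mathbb{R}^+}\psi^L u_0^k\le\int_{\mathbb{R}^+}u_0^k\le C$ uniformly in $k$ by the initial-data assumption (iii). Letting $L\to\infty$ and using Lebesgue's Monotone Convergence Theorem gives $\int_{\mathbb{R}^+}u^k(x,t_0)\,{\rm d}x\le C$ uniformly in $k$, $\varepsilon$ and $t_0$.

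For the second bound I would apply the analogous argument to $V_0-v^k$, which is nonnegative by Corollary \ref{lbb} and satisfies $(V_0-v^k)_t=\varepsilon\phi(v^k)_{xx}+ku^kv^k$ in the weak sense (note the sign change: the reaction acts as a \emph{source} for $V_0-v^k$). Testing against $\psi^L$ and integrating over $\mathbb{R}^+\times(0,t_0)$ produces
\begin{align*}
\int_{\mathbb{R}^+}\psi^L(V_0-v^k)(x,t_0)\,{\rm d}x=\int_{\mathbb{R}^+}\psi^L(V_0-v_0^k)\,{\rm d}x+\varepsilon\int_0^{t_0}\!\!\int_{\mathbb{R}^+}\phi(v^k)\psi^L_{xx}\,{\rm d}x{\rm d}t+k\int_0^{t_0}\!\!\int_{\mathbb{R}^+}\psi^L u^kv^k\,{\rm d}x{\rm d}t.
\end{align*}
Since $V_0-v^k\ge0$, the left side equals $\int\psi^L|v^k-V_0|$. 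The reaction term is again bounded by $C$ via Lemma \ref{lk}; the $\varepsilon$-diffusion term is bounded uniformly in $k$ using $0\le\phi(v^k)\le\phi(V_0)$ and the support and bound on $\psi^L_{xx}$ (and is absent when $\varepsilon=0$); and $\int\psi^L(V_0-v_0^k)\le\|v_0^k-V_0\|_{L^1(\mathbb{R}^+)}\le C$ uniformly in $k$ by assumption (iii), since $v_0^k\to v_0^\infty=V_0$ on $x>0$ in $L^1$. Passing to the limit $L\to\infty$ by Monotone Convergence yields $\int_{\mathbb{R}^+}|v^k(x,t_0)-V_0|\,{\rm d}x\le C$.

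The main obstacle I anticipate is twofold. First, the boundary handling at $x=0$: because $\psi^L(0)=1$ rather than vanishing, the double integration by parts of $\phi(u^k)_{xx}$ against $\psi^L$ leaves boundary terms at $x=0$ involving $\phi(u^k)_x(0,t)$ and $\phi(U_0)\psi^L_x(0)$. These must be shown either to vanish or to be absorbed; the cleanest route is to build into the cut-off the condition $\psi^L_x(0)=0$ and to use the Dirichlet condition $u^k(0,t)=U_0$ together with the weak formulation directly rather than classical integration by parts, since $u^k$ need only be a weak solution. Second, care is needed that all estimates are genuinely uniform in $\varepsilon\ge0$; the $\varepsilon$-weighted diffusion term in the $v^k$ estimate must be bounded by a constant independent of $\varepsilon$, which holds because $\varepsilon\le$ a fixed constant in the regime considered and $\phi(v^k)$ is bounded, but this uniformity should be stated explicitly. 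Once these points are addressed, the remaining steps are routine applications of the uniform $L^\infty$ bounds, the reaction bound, and Monotone Convergence.
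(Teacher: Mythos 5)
Your treatment of the $v^k$ bound is essentially correct and in fact slightly more direct than the paper's: since Corollary \ref{lbb} gives $0\le v^k\le V_0$, you may write $|v^k-V_0|=V_0-v^k$ and test the equation for $V_0-v^k$ against $\psi^L$ directly, whereas the paper multiplies by $m_\alpha'(\phi(v^k)-\phi(V_0))\psi^L$ and lets $\alpha\to0$ to produce the modulus via \cite[Lemma 7.6]{epde}; both routes reduce the estimate to Lemma \ref{lk}, the uniform bound on $\Vert v_0^k-v_0^\infty\Vert_{L^1(\mathbb{R}^+)}$, and the fact that $\psi^L_{xx}$ has uniformly bounded support and sup-norm. (There is a harmless sign slip: $(V_0-v^k)_t=-\varepsilon\phi(v^k)_{xx}+ku^kv^k$, so the diffusion term in your identity should carry a minus sign, but you only use its absolute value, so nothing is lost.)

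The $u^k$ half, however, has a genuine gap. Because $\psi^L(0)=1$ and the boundary condition for $u^k$ at $x=0$ is Dirichlet rather than Neumann, integrating $\phi(u^k)_{xx}\psi^L$ by parts leaves the flux term $-\int_0^{t_0}\phi(u^k)_x(0,t)\,{\rm d}t$, which represents the mass of $u$ injected through the boundary up to time $t_0$ and is not controlled by any of the available estimates: knowing $\phi(u^k)(0,t)=\phi(U_0)$ controls only the term multiplying $\psi^L_x(0)$ (which vanishes anyway), and the $L^2(S_T)$ bound on $\phi(u^k)_x$ does not control its trace at $x=0$. Your proposed remedies do not remove this term; in particular the weak formulation cannot be invoked with $\psi^L$ as a test function precisely because $\mathcal{F}_T$ requires $\xi(0,t)=0$ in order to suppress exactly this flux. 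The paper avoids the boundary altogether: as in the proof of Lemma \ref{lkrb} (and of Lemma \ref{lk}), one tests the $u$ equation with a cut-off $\varphi^L$ satisfying $\varphi^L(0)=\varphi^L_x(0)=0$, which yields $\int_1^\infty u^k(x,t_0)\,{\rm d}x\le C$ together with the $ku^kv^k$ bound, and the missing piece $\int_0^1u^k\,{\rm d}x\le U_0$ is immediate from Corollary \ref{lbb}. With that substitution of cut-off, the rest of your argument goes through.
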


\begin{proof}The estimate for $u^k$ is immediate from the proof of Lemma \ref{lk} and the Monotone Convergence Theorem.
Choose a smooth convex function $m:\mathbb{R}\rightarrow\mathbb{R}$ with
\begin{align*}
m\ge0,\ m(0)=0,\ m'(0)=0,\ m(r)=|r|-\frac{1}{2}\ \mbox{for} \ |r|>1.
\end{align*}
For each $\alpha>0$, define the functions
\begin{align*}
m_{\alpha}(r):=\alpha m(\frac{r}{\alpha}),
\end{align*}
which approximate the modulus function as $\alpha\rightarrow 0$. Denote $\hat v=v^k-V_0$, $\hat z=\phi(v^k)-\phi(V_0)$.

Multiplying the equation of $\hat v$
by $m_\alpha '(\hat z)\psi^L$, where $\psi^L$ as in the proof of Lemma \ref{lrb}, and integrating over $\mathbb{R}^+\times(0,t_0)$, we obtain
\begin{align*}
\int_{0}^{t_0}\int_{\mathbb{R}^+}m_\alpha '(\hat z)\psi^L\hat v_t{\rm d}x{\rm d}t=&\varepsilon\int_{0}^{t_0}\int_{\mathbb{R}^+}m_\alpha '(\hat z)\psi^L\hat{z}_{xx}{\rm d}x{\rm d}t-k\int_{0}^{t_0}\int_{\mathbb{R}^+}m_\alpha '(\hat z)\psi^L u^kv^k{\rm d}x{\rm d}t.
\end{align*}
Letting $\alpha\rightarrow 0$ and \cite[Lemma 7.6]{epde} yields
\begin{align}
&\int_{\mathbb{R}^+}|\hat v(x,t_0)|\psi^L{\rm d}x-\int_{\mathbb{R}^+}|\hat v(x,0)|\psi^L{\rm d}x \non\\ \le&\varepsilon\int_{0}^{t_0}\int_{\mathbb{R}^+}|\hat z|\psi^L_{xx}{\rm d}x{\rm d}t
-k\int_{0}^{t_0}\int_{\mathbb{R}^+}{\rm sgn}(\hat z)\psi^L u^kv^k{\rm d}x{\rm d}t.
\label{lv1}
\end{align}
We know that $\psi^L_{xx}\ne0$ only when $\psi^L_{xx}\in[L,L+1]$ and by Lemma \ref{lk}, the right-hand side of (\ref{lv1}) is bounded independently of $L$ and $k$. So it follows from the fact that $v_0^k-v_0^\infty$ is bounded in $L^1(\mathbb{R^+})$, there exists $C>0$ independent of $k$, such that for all $t_0\in[0,T]$
\begin{align*}
\int_{\mathbb{R}^+}|v^k(x,t_0)-V_0|{\rm d}x\le C.
\end{align*}
\end{proof}

By using the Mean Value Theorem and \textit{a priori} bounds on $u^k,v^k$ on Corollary \ref{lbb}, we can get the following corollary of Lemma \ref{lkuv}.
\begin{corollary}
There exists a constant $C>0$, independent of $\varepsilon\ge0$ and $k>0$, such that for any solution $(u^k,v^k)$ of (\ref{a}), we have
\begin{align}
\int_{\mathbb{R}^+}\phi(u^k)(\cdot,t_0){\rm d}x\le C\quad{\rm and}\quad \int_{\mathbb{R}^+}|\phi(v^k)(\cdot,t_0)-\phi(V_0)|{\rm d}x\le C,
\label{cb}
\end{align}
for all $t_0\in[0,T]$.
\label{lcb}
\end{corollary}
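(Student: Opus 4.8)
The plan is to deduce the two bounds in (\ref{cb}) directly from the $L^1$ bounds on $u^k$ and $v^k-V_0$ established in Lemma \ref{lkuv}, by comparing $\phi$ with its argument pointwise through the Mean Value Theorem. The key observation is that, by Corollary \ref{lbb}, the values $u^k(\cdot,t_0)$ and $v^k(\cdot,t_0)$ lie in the compact intervals $[0,U_0]$ and $[0,V_0]$ respectively, on which the continuous increasing derivative $\phi'$ is bounded above by its value at the right endpoint; the possible degeneracy $\phi'(0)=0$ from (\ref{phi}) is harmless, since only upper bounds on $\phi'$ are needed.

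For the first estimate, recall from (\ref{phi}) that $\phi(0)=0$ and $\phi$ is increasing, so $\phi(u^k)\ge 0$. The Mean Value Theorem on $[0,u^k(x,t_0)]$ gives $\phi(u^k)=\phi'(\xi)u^k$ for some $\xi\in[0,u^k]\subseteq[0,U_0]$, and since $\phi'$ is increasing this yields the pointwise bound $0\le\phi(u^k)\le\phi'(U_0)u^k$. Integrating in $x$ and applying the first bound in (\ref{lb}) gives $\int_{\mathbb{R}^+}\phi(u^k)(\cdot,t_0)\,{\rm d}x\le\phi'(U_0)C$, with $C$ as in Lemma \ref{lkuv}.

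For the second estimate, applying the Mean Value Theorem to $\phi$ between $v^k(x,t_0)$ and $V_0$ gives $|\phi(v^k)-\phi(V_0)|=\phi'(\eta)|v^k-V_0|$ for some $\eta$ lying between $v^k$ and $V_0$, so $\eta\in[0,V_0]$ by Corollary \ref{lbb}. Since $\phi'$ is increasing, $\phi'(\eta)\le\phi'(V_0)$, whence $|\phi(v^k)-\phi(V_0)|\le\phi'(V_0)|v^k-V_0|$ pointwise. Integrating and using the second bound in (\ref{lb}) gives $\int_{\mathbb{R}^+}|\phi(v^k)(\cdot,t_0)-\phi(V_0)|\,{\rm d}x\le\phi'(V_0)C$.

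Finally, since the multiplicative constants $\phi'(U_0)$ and $\phi'(V_0)$ depend only on $\phi$, $U_0$ and $V_0$, while the constant $C$ of Lemma \ref{lkuv} is independent of $\varepsilon\ge0$ and $k>0$, the resulting constant in (\ref{cb}) inherits this independence. I expect no genuine obstacle here: the argument is simply a pointwise Lipschitz comparison on a compact range, and the only point requiring care is recording which endpoint controls $\phi'$ in each of the two cases.
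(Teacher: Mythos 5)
Your argument is correct and is exactly the route the paper indicates: the corollary is deduced from Lemma \ref{lkuv} via the Mean Value Theorem and the $L^\infty$ bounds of Corollary \ref{lbb}, with $\phi'$ controlled at the right endpoint of the relevant interval because $\phi'$ is increasing. Your write-up simply fills in the details the paper leaves implicit.
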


Next we prove a bound for the $L^2$-norm of the space derivatives $\phi(u^k)_x$ and $\phi(v^k)_x$.
\begin{lemma}\label{ldnb}
There exists $C>0$, independent of $\varepsilon\ge0$ and $k>0$, such that for any solution $(u^k,v^k)$ of (\ref{a}),
\label{ln}
\begin{align}
\iint_{S_T}|\phi(u^k)_x|^2{\rm d}x{\rm d}t\le C,\quad{\rm and}\quad \varepsilon\iint_{S_T}|\phi(v^k)_x|^2{\rm d}x{\rm d}t\le C.
\end{align}
\end{lemma}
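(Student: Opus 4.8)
The plan is to prove both bounds by energy estimates of exactly the type used in Lemma~\ref{lrb}, testing the equation for $u^k$ against $\big[\phi(u^k)-\phi(U_0)\big]\psi^L$ and the equation for $v^k$ against $\big[\phi(v^k)-\phi(V_0)\big]\psi^L$, with $\psi^L$ the cut-off from the proof of Lemma~\ref{lrb}. The new feature is that Lemma~\ref{lrb} relied only on the \emph{local} bound Lemma~\ref{lkrb}, so its constant depended on $L$; here the \emph{global} bounds already available---the $L^1(S_T)$ bound on $ku^kv^k$ (Lemma~\ref{lk}), the $L^1$ bounds of Lemma~\ref{lkuv} and Corollary~\ref{lcb}, and $0\le u^k\le U_0$, $0\le v^k\le V_0$ (Corollary~\ref{lbb})---will let me bound every term \emph{independently of $L$}, and then let $L\to\infty$ by monotone convergence.

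For $u^k$, multiplying $u^k_t=\phi(u^k)_{xx}-ku^kv^k$ by $\big[\phi(u^k)-\phi(U_0)\big]\psi^L$ and integrating by parts over $S_T$ (the boundary term at $x=0$ vanishes because $u^k(0,\cdot)=U_0$ forces $\phi(u^k)-\phi(U_0)=0$ there and $\psi^L_x(0)=0$) gives, with $F(s)=\int_0^s\phi(\sigma)\,{\rm d}\sigma$,
\begin{align*}
\int_0^T\!\!\int_{\mathbb{R}^+}|\phi(u^k)_x|^2\psi^L\,{\rm d}x{\rm d}t
=&-\int_{\mathbb{R}^+}\psi^L\big[(F(u^k)-\phi(U_0)u^k)(\cdot,T)-(F(u^k)-\phi(U_0)u^k)(\cdot,0)\big]{\rm d}x\\
&+\frac{1}{2}\int_0^T\!\!\int_{\mathbb{R}^+}\big[\phi(u^k)-\phi(U_0)\big]^2\psi^L_{xx}\,{\rm d}x{\rm d}t
-k\int_0^T\!\!\int_{\mathbb{R}^+}\big[\phi(u^k)-\phi(U_0)\big]\psi^L u^kv^k\,{\rm d}x{\rm d}t.
\end{align*}
Since $|F(s)-\phi(U_0)s|\le\phi(U_0)s$, the first term is controlled by $\phi(U_0)\big(\int u^k(\cdot,T)+\int u_0^k\big)\le C$ via Lemma~\ref{lkuv}; the integrand of the second is bounded by $\phi(U_0)^2$ while $\psi^L_{xx}$ is supported on an interval of unit length and bounded independently of $L$, so that term is $\le C$; and the last is $\le\phi(U_0)\,k\iint_{S_T}u^kv^k\le C$ by Lemma~\ref{lk}. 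As all three bounds are independent of $L$, $k$ and $\varepsilon$, monotone convergence as $L\to\infty$ yields $\iint_{S_T}|\phi(u^k)_x|^2\le C$.

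The estimate for $v^k$ when $\varepsilon>0$ is analogous, now using the Neumann condition $\phi(v^k)_x(0,\cdot)=0$ (so the boundary term again vanishes) and the primitive $G(s)=\int_{V_0}^s[\phi(\sigma)-\phi(V_0)]\,{\rm d}\sigma\ge0$, which satisfies $G(s)\le\phi(V_0)|s-V_0|$ for $s\in[0,V_0]$. The time-boundary term is then controlled by $\phi(V_0)\int|v^k-V_0|\le C$ (Lemma~\ref{lkuv}) and the reaction term by Lemma~\ref{lk} as before. The one genuinely different point, and the main obstacle, is that the curvature term now carries a factor $\varepsilon$, namely $\frac{\varepsilon}{2}\int_0^T\int_{\mathbb{R}^+}[\phi(v^k)-\phi(V_0)]^2\psi^L_{xx}$; bounding it crudely by $\frac{\varepsilon}{2}\phi(V_0)^2T\sup|\psi^L_{xx}|$ would leave a constant depending on $\varepsilon$. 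To obtain $\varepsilon$-independence I instead let $L\to\infty$ and show this term vanishes: since $\psi^L_{xx}$ is supported on $[L,L+1]$ and $[\phi(v^k)-\phi(V_0)]^2\le\phi(V_0)|\phi(v^k)-\phi(V_0)|$, it is bounded by $\frac{\varepsilon}{2}\phi(V_0)\sup|\psi^L_{xx}|\int_0^T\!\int_L^{\infty}|\phi(v^k)-\phi(V_0)|\,{\rm d}x{\rm d}t$, and the latter tail integral tends to $0$ as $L\to\infty$ because $\phi(v^k)(\cdot,t)-\phi(V_0)$ is bounded in $L^1(\mathbb{R}^+)$ uniformly in $t$ by Corollary~\ref{lcb}. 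Passing to the limit therefore gives $\varepsilon\iint_{S_T}|\phi(v^k)_x|^2\le C$ with $C$ independent of $\varepsilon$ and $k$; for $\varepsilon=0$ the bound holds trivially.

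Finally, the formal testing above is justified, as in Lemma~\ref{lrb}, at the level of the approximating (truncated and non-degenerate) problems constructed in Lemma~\ref{lww} and Theorem~\ref{tq}, after which weak lower semicontinuity of the $L^2$-norm of $\phi(\cdot)_x$ under the weak convergence established there transfers the bounds to $(u^k,v^k)$.
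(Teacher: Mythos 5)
Your proposal is correct and follows essentially the same route as the paper: testing with $\big[\phi(u^k)-\phi(U_0)\big]\psi^L$, controlling the time-boundary term via the Mean Value Theorem/primitive $F$ and Lemma~\ref{lkuv}, the reaction term via Lemma~\ref{lk}, the $\psi^L_{xx}$ term via the unit-length support of $\psi^L_{xx}$, and then letting $L\to\infty$ by monotone convergence. The only addition is your more careful treatment of the $\varepsilon$-weighted curvature term in the $v^k$ estimate (which the paper dispatches with ``likewise''), and that refinement is sound.
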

\begin{proof}The proof follows the similar arguments in Lemma \ref{lrb}. Let $\psi^L$ be as in the proof of Lemma \ref{lrb}. Then multiplying the equation for $u^k$ by $\big[\phi(u^k)-\phi(U_0)\big]\psi^L$ and integrating over $S_T$ give
\begin{align*}
&\iint_{S_T}|\phi(u^k)_x|^2\psi^L{\rm d}x{\rm d}t\\=&-\int_{\mathbb{R}^+}\phi(U_0)\big(u^k_0-u^k(x,T)\big)\psi^L{\rm d}x-\int_{\mathbb{R}^+}\big[F(x,T)-F(x,0)\big]\psi^L{\rm d}x\\ &+\frac{1}{2}\iint_{S_T}\big[\phi(u^k)-\phi(U_0)\big]^2\psi^L_{xx}{\rm d}x{\rm d}t-k\iint_{S_T}\big[\phi(u^k)-\phi(U_0)\big]\psi^L u^kv^k{\rm d}x{\rm d}t,
\end{align*}
where $F=\dint_0^{u^k}\phi(s){\rm d}s$.
By the Mean Value Theorem and (\ref{b}), we obtain for $s\in[0,U_0]$ such that
\begin{align*}
F(x,T)-F(x,0)=\phi(s)u^k(x,T),
\end{align*}
which yields
\begin{align*}
\left|\int_{\mathbb{R}^+}[F(x,T)-F(x,0)]\psi^L{\rm d}x\right|\le\phi(U_0)\displaystyle\sup_{0\le t\le T}\int_{\mathbb{R}^+}u^k(x,t){\rm d}x,
\end{align*}
which is bounded by Lemma \ref{lkuv}. Combining with Lemma \ref{lk}, using Lebesgue's Monotone Convergence Theorem and letting $L\rightarrow\infty$ imply that there exists a constant $C>0$ such that
\begin{align*}
\iint_{S_T}|\phi(u^k)_x|^2{\rm d}x{\rm d}t\le C,
\end{align*}
independently of $k$. If $\varepsilon>0$, the estimate for $\phi(v^k)_x$ can be proved likewise, using the equation for $v^k$.
\end{proof}

The following estimates for the differences of space and time translates of solutions will yield sufficient compactness both to obtain the existence of solutions of (\ref{a}) when $\varepsilon>0$ and $\varepsilon=0$, and to study the strong-interaction limit $k\rightarrow\infty$. The estimates for the differences of space translates of solutions are proved in the similar way to \cite[Lemma 2.15]{selfsim}, which importantly allows $\varepsilon=0$. Note that we need alternative procedures to deal with the nonlinear diffusion, and the monotonicity properties of $\phi$ and \cite[Lemma 7.6]{epde} are both used here.

Recall the notion for space and time translates introduced in (\ref{lnst}).
\begin{lemma}\label{lst}
Suppose that $\varepsilon\ge0$ and let $(u^k,v^k)$ be a solution of (\ref{a}) satisfying (\ref{b}). Then for each $r\in(0,1)$, there exists a function $K_r\ge 0$ independent of $\varepsilon\ge0$ and $k>0$ such that $K_r(\delta)\rightarrow0$ as $|\delta|\rightarrow0$ and for all $|\delta|\le\frac{r}{4}$ and $t\in(0,T)$, we have
\begin{align*}
\int_r^\infty\left|\phi(u^k)-\phi(S_\delta u^k)\right|+\left|\phi(v^k)-\phi(S_\delta v^k)\right|{\rm d}x\le K_r(\delta).
\end{align*}
\end{lemma}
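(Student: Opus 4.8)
The plan is to propagate the control of space translates from the initial data forward in time by an $L^1$-contraction argument applied \emph{simultaneously} to the two equations, and then to pass from $u^k,v^k$ to $\phi(u^k),\phi(v^k)$ using that $\phi$ is increasing. Fix $r\in(0,1)$ and introduce a smooth cut-off $\zeta=\zeta^r$ with $0\le\zeta\le1$, $\zeta\equiv0$ on $(0,r/2)$, $\zeta\equiv1$ on $(r,\infty)$ and $\zeta_x(0)=0$, so that $\zeta_{xx}$ is supported in the compact region $[r/2,r]$, which stays away from the boundary $x=0$. Write $y_u:=S_\delta u^k-u^k$, $y_v:=S_\delta v^k-v^k$, $z_u:=\phi(S_\delta u^k)-\phi(u^k)$ and $z_v:=\phi(S_\delta v^k)-\phi(v^k)$. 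Since $|\delta|\le r/4$, we have $x+\delta>0$ on $\mathrm{supp}\,\zeta$, so both the original and the shifted equations hold there, and $y_u,y_v$ satisfy $y_{u,t}=z_{u,xx}-kR$ and $y_{v,t}=\varepsilon z_{v,xx}-kR$ with the common reaction $R:=(S_\delta u^k)(S_\delta v^k)-u^kv^k$.

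Following the regularisation used in Lemmas \ref{lur} and \ref{lkuv}, I would take the smooth convex approximation $m_\alpha$ of the modulus, multiply the $y_u$-equation by $m_\alpha'(z_u)\zeta$ and the $y_v$-equation by $m_\alpha'(z_v)\zeta$, add them, and integrate over $\mathbb{R}^+\times(0,t_0)$. Because $\phi$ is strictly increasing, $\mbox{sgn}(z_u)=\mbox{sgn}(y_u)$ and $\mbox{sgn}(z_v)=\mbox{sgn}(y_v)$, so letting $\alpha\to0$ and invoking \cite[Lemma 7.6]{epde} turns the time terms into the time increments of $\int\zeta(|y_u|+|y_v|)$. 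Integrating the diffusion terms by parts twice, the nonnegative convexity terms $\int\zeta\,m_\alpha''(z_u)z_{u,x}^2$ and $\varepsilon\int\zeta\,m_\alpha''(z_v)z_{v,x}^2$ are discarded; because $\zeta=\zeta_x=0$ near $x=0$, all boundary contributions vanish (this is exactly why we localise away from the Dirichlet boundary), and what survives is $\int_0^{t_0}\!\int\zeta_{xx}(|z_u|+\varepsilon|z_v|)$.

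The structural heart of the argument is the reaction contribution $-k\int_0^{t_0}\!\int\zeta[\mbox{sgn}(y_u)+\mbox{sgn}(y_v)]R$. Since $u^k,v^k,S_\delta u^k,S_\delta v^k\ge0$ by (\ref{b}), a short check on the sign combinations of $(y_u,y_v)$ shows $[\mbox{sgn}(y_u)+\mbox{sgn}(y_v)]R\ge0$ pointwise, so this term is $\le0$ and may be dropped; note that for a single equation the reaction term is \emph{not} signed, which is precisely why the two equations must be combined. We are left with
\[
\int\zeta(|y_u|+|y_v|)(\cdot,t_0)\,dx\le\int\zeta(|y_u|+|y_v|)(\cdot,0)\,dx+\int_0^{t_0}\!\!\int\zeta_{xx}\big(|z_u|+\varepsilon|z_v|\big)\,dx\,dt.
\]

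It then remains to bound the two terms on the right by a quantity that vanishes as $\delta\to0$, uniformly in $k$ and $\varepsilon$. The initial term is at most $\int_{r/2}^\infty(|y_u|+|y_v|)(\cdot,0)$, which by the initial-data hypothesis (iv) is bounded by a modulus $\omega_{r/2}(\delta)\to0$. For the cut-off term, $\zeta_{xx}$ lives on $[r/2,r]$, where the $k$-uniform bounds on $\phi(u^k)_x$ and $\sqrt{\varepsilon}\,\phi(v^k)_x$ from Lemma \ref{ldnb} apply; writing $z_u=\int_0^\delta\phi(u^k)_x(\cdot+s)\,ds$ and using Cauchy--Schwarz gives $\int_0^T\!\int_{r/2}^r(|z_u|+\varepsilon|z_v|)\le C(r)|\delta|$. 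Finally, converting back via the Mean Value Theorem ($|z_u|\le N|y_u|$, $|z_v|\le N|y_v|$ with $N=\phi'(\max\{U_0,V_0\})$) and $\int_r^\infty(\cdot)\le\int\zeta(\cdot)$ yields the claim with $K_r(\delta):=N(\omega_{r/2}(\delta)+C(r)|\delta|)$. I expect the main obstacle to be this cut-off term: controlling $\int\zeta_{xx}(|z_u|+\varepsilon|z_v|)$ uniformly in $k$ is exactly where the nonlinear diffusion forces the use of the space-gradient estimates for $\phi(u^k),\phi(v^k)$ (Lemma \ref{ldnb}) in place of a direct estimate on $u^k,v^k$, which is the chief departure from the linear-diffusion case. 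As usual, the formal multiplication by $m_\alpha'(z)\zeta$ is to be justified on the smooth approximations underlying $(u^k,v^k)$, with the estimate recovered for $(u^k,v^k)$ by Fatou's lemma.
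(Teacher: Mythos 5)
Your proposal is correct and follows essentially the same route as the paper: a Kruzhkov-type $L^1$-contraction for the pair of translated equations with a cut-off vanishing on $[0,r/2]$, the sign identity $\mbox{sgn}(\phi(a)-\phi(b))=\mbox{sgn}(a-b)$ together with \cite[Lemma 7.6]{epde}, the nonnegativity of the combined reaction term, Cauchy--Schwarz with the gradient bounds of Lemma \ref{ldnb} to get the $O(|\delta|)$ cut-off contribution, hypothesis (iv) for the initial term, and the Mean Value Theorem to pass between $u$-differences and $\phi(u)$-differences. The only cosmetic difference is that the paper additionally truncates the cut-off at $x=L$ (the family $\gamma_r^L$) and sends $L\to\infty$ by monotone convergence to justify the integration by parts at infinity, a detail you defer to the approximation step.
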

\begin{proof}Let
\begin{align}
&u:=u^k-S_\delta u^k,\quad g:=\phi(u^k)-\phi(S_\delta u^k),\non \\
 &v:=v^k-S_\delta v^k,\quad n:=\phi(v^k)-\phi(S_\delta v^k),
 \label{nuv}
\end{align}
and define a cut-off function $\gamma_r^1\in C^\infty(\mathbb{R}^+)$ such that $0\le\gamma_r^1\le1$, $\gamma_r^1(x)=0$ when $x\in[0,r/2]$, $\gamma_r^1(x)=1$ when $x\in[r,1]$ and $\gamma_r^1(x)=0$ when $x\ge2$.
Then given $L\ge1$ define a family of cut-off function $\gamma_r^L\in C^\infty(\mathbb{R}^+)$ such $\gamma_r^L(x)=\gamma_r^1(x)$ when $x\in[0,r]$, $\gamma_r^L(x)=1$ when $x\in[r,L]$ and $\gamma_r^L(x)=\gamma_r^1(x+1-L)$ when $x\ge L$. Note that $0\le\gamma_r^L\le1$ for all $L$, and $(\gamma_r^L)_x$, $(\gamma_r^L)_{xx}$ are bounded in both $L^\infty(\mathbb{R}^+)$ and $L^1(\mathbb{R^+})$ independently of $L$.

This follows from the similar form of argument used to show in \cite[Lemma 3.7]{selfsim}, but with nonlinear diffusion. We omit most of the details and only note two key calculations involving nonlinear diffusion.

Let $m_\alpha$ be as defined in the proof of Lemma \ref{lkuv}. Letting $\alpha\rightarrow 0$ gives
\begin{align*}
\displaystyle\lim_{\alpha\rightarrow0}(m_\alpha)'(\phi(u^k)-\phi(S_\delta u^k))\rightarrow {\rm sgn}(\phi(u^k)-\phi(S_\delta u^k))={\rm sgn}(u^k-S_\delta u^k).
\end{align*}
Then by \cite[Lemma 7.6]{epde}, we have
\begin{align}
&\int_{\frac{r}{2}}^\infty\gamma_r^L\left\{|u(x,t_0)|+|v(x,t_0)|\right\}{\rm d}x\non\\ \le&\int_{\frac{r}{2}}^\infty\gamma_r^L\left\{|u(x,0)|+|v(x,0)|\right\}{\rm d}x+\int_{0}^{t_0}\int_{\frac{r}{2}}\left(\gamma_r^L\right)_{xx}\left\{|g|+\varepsilon|n|\right\}{\rm d}x{\rm d}t\non\\&-k\int_{0}^{t_0}\int_{\frac{r}{2}}^\infty\gamma_r^L\left[{\rm sgn}(g)+{\rm sgn}(n)\right]\left(u^kv^k-S_\delta u^kS_\delta v^k\right){\rm d}x{\rm d}t\non\\
\le&\int_{\frac{r}{2}}^\infty\gamma_r^L\left\{|u(x,0)|+|v(x,0)|\right\}{\rm d}x+\int_{0}^{t_0}\int_{\frac{r}{2}}^\infty\left(\gamma_r^L\right)_{xx}\left\{|g|+\varepsilon|n|\right\}{\rm d}x{\rm d}t,\label{ls1}
\end{align}
because
\begin{align}
\left[{\rm sgn}(g)+{\rm sgn}(n)\right]\left(u^kv^k-S_\delta u^kS_\delta v^k\right)\ge0.
\end{align}

Now we prove the following bound for the right-hand side of (\ref{ls1}),
\begin{align*}
\int_{0}^{t_0}\int_{\frac{r}{2}}^\infty\left(\gamma_r^L\right)_{xx}|g|{\rm d}x{\rm d}t
\le&|\delta|\left[\int_{0}^{t_0}\int_{\frac{r}{4}}^\infty\left(\gamma_r^L\right)_{xx}^2{\rm d}x{\rm d}t\right]^{\frac{1}{2}}\left[\int_{0}^{t_0}
\int_{\frac{r}{4}}^\infty\left|\phi(u^k)_x(x+\delta,t)\right|^2{\rm d}x{\rm d}t\right]^{\frac{1}{2}}.
\end{align*}
By Lemma \ref{ln} and a similar estimate for $n$, we get
\begin{align}
\int_{0}^{t_0}\int_{\frac{r}{2}}^\infty\left(\gamma_r^L\right)_{xx}\left\{|g|+\varepsilon|n|\right\}{\rm d}x{\rm d}t\le K_r|\delta|,
\label{ls2}
\end{align}
for some constant $K_r$. The result follows from (\ref{ls1}), the fact that $\Vert u^k_0(\cdot+\delta)-u_0^k(\cdot)\Vert_{L^1((r,\infty))}+\Vert v^k_0(\cdot+\delta)-v_0^k(\cdot)\Vert_{L^1((r,\infty))}\le\omega_r(\delta)$ where $\omega_r(\mu)\rightarrow 0$ as $\mu\rightarrow 0$ and Lebesgue's Monotone Convergence Theorem combining with the Mean Value Theorem.
\end{proof}

The estimates for the difference of time translates are proved by using similar methods to those in the proof of Lemma \ref{ltcl}, passing to the limit as $L\rightarrow\infty$ in integrals over $(0,L+1)$ to obtain estimates on integrals over $\mathbb{R}^+$, we leave the details to reader.
\begin{lemma}\label{ltt}
Suppose $\varepsilon\ge0$ and let $(u^k,v^k)$ be a solution of (\ref{a}) satisfying (\ref{b}). Then there exists $C>0$, independent of $\varepsilon$ and $k$, for any $\tau\in(0,T)$ that
\begin{align*}
\int_{0}^{T-\tau}\int_{\mathbb{R}^+}|\phi(T_\tau u^k)-\phi (u^k)|^2{\rm d}x{\rm d}t\le\tau C,\\
\int_{0}^{T-\tau}\int_{\mathbb{R}^+}|\phi(T_\tau v^k)-\phi (v^k)|^2{\rm d}x{\rm d}t\le\tau C.
\end{align*}
\end{lemma}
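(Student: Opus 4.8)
The plan is to follow the structure of the proof of Lemma \ref{ltcl}, carrying out the argument on a truncated spatial region via the cut-off function $\psi^L$ and then letting $L\to\infty$ to pass from integrals over $(0,L+1)$ to integrals over $\mathbb{R}^+$. I treat the estimate for $u^k$; the one for $v^k$ is analogous. First, since $\phi'$ is increasing and $0\le u^k\le U_0$ by Corollary \ref{lbb}, the Mean Value Theorem gives the pointwise bound $|\phi(T_\tau u^k)-\phi(u^k)|^2\le N|\phi(T_\tau u^k)-\phi(u^k)|\,|T_\tau u^k-u^k|$ with $N=\phi'(U_0)$. Multiplying by $\psi^L$ (as defined in the proof of Lemma \ref{lrb}) and integrating over $(0,T-\tau)\times\mathbb{R}^+$, I then write $T_\tau u^k-u^k=\int_0^\tau u^k_t(\cdot,\cdot+s)\,{\rm d}s$ and substitute the equation $u^k_t=\phi(u^k)_{xx}-ku^kv^k$, producing one term carrying the nonlinear diffusion $\phi(u^k)_{xx}$ and one carrying the reaction $ku^kv^k$.

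Next, I integrate the diffusion term by parts in $x$. The boundary contribution at $x=0$ vanishes because $u^k(0,\cdot)=U_0$ is independent of $t$, so $\phi(T_\tau u^k)-\phi(u^k)=0$ there, while the contribution at the far end is killed by $\psi^L$. This yields three integrals exactly as in Lemma \ref{ltcl}: a gradient cross term $I_1$ of the form $\int\psi^L\left|\phi(T_\tau u^k)-\phi(u^k)\right|_x\phi(u^k)_x(\cdot,\cdot+s)$, a term $I_2$ carrying $\psi^L_x$, and the reaction term $I_3$. Splitting $I_1$ and applying the Cauchy--Schwarz inequality together with the uniform gradient bound of Lemma \ref{ln} bounds $I_1$ by $\tau C$ with $C$ independent of $k$, $\varepsilon$ and $L$; the term $I_2$ is controlled the same way, using that $\sup|\psi^L_x|$ is bounded independently of $L$; and $I_3$ is bounded by $\tau C$ using the $L^\infty$ bounds $0\le u^k\le U_0$, $0\le v^k\le V_0$ and the key $L^1$-bound on $ku^kv^k$ from Lemma \ref{lk}. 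Each constant being independent of $L$, I let $L\to\infty$ and invoke Lebesgue's Monotone Convergence Theorem (since $\psi^L\uparrow 1$) to obtain $\int_0^{T-\tau}\int_{\mathbb{R}^+}|\phi(T_\tau u^k)-\phi(u^k)|^2\,{\rm d}x{\rm d}t\le\tau C$.

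For $v^k$ the same scheme applies, now starting from $v^k_t=\varepsilon\phi(v^k)_{xx}-ku^kv^k$, with the boundary term at $x=0$ now handled by the Neumann condition $\phi(v^k)_x(0,\cdot)=0$. The crucial point, which also makes the estimate uniform down to $\varepsilon=0$, is that the integration by parts now produces the diffusion terms weighted by $\varepsilon$; pairing the $\varepsilon\,\phi(v^k)_x$ factors via Cauchy--Schwarz against the $\varepsilon$-weighted bound $\varepsilon\iint_{S_T}|\phi(v^k)_x|^2\,{\rm d}x{\rm d}t\le C$ of Lemma \ref{ln} yields a bound independent of $\varepsilon\ge0$, and when $\varepsilon=0$ only the reaction term $I_3$ survives. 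The main obstacle is precisely the gradient cross term $I_1$: it has no counterpart in the linear-diffusion setting of \cite{selfsim} and arises solely from the nonlinearity $\phi$, so the whole argument hinges on the availability of the uniform (in $k$ and $\varepsilon$) $L^2$-estimate for $\phi(u^k)_x$ and $\sqrt{\varepsilon}\,\phi(v^k)_x$ supplied by Lemma \ref{ln}, and on checking that every constant produced along the way is independent of $L$ so that the limit $L\to\infty$ can legitimately be taken.
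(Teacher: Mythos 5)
Your proposal is correct and follows essentially the same route the paper intends: it reproduces the argument of Lemma \ref{ltcl} (Mean Value Theorem, substitution of the equation for the time increment, integration by parts into the terms $I_1,I_2,I_3$, bounded via Lemmas \ref{lk} and \ref{ln}) with the cut-off $\psi^L$, and then passes to the limit $L\to\infty$, which is exactly what the paper states and leaves to the reader. Your explicit treatment of the $\varepsilon$-weighting for the $v^k$ equation, showing the cross term is $\varepsilon\cdot O(1/\varepsilon)=O(1)$ uniformly down to $\varepsilon=0$, correctly captures the one point the paper glosses over.
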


\begin{lemma}\label{lwc}
Let $(u^k,v^k)$ be weak solutions of (\ref{a}) with $k>0$ and $\varepsilon\ge0$. Then
\begin{align}
\phi(u^k)-\phi(\hat u)\in L^2(0,T;W^{1,2}_0(\mathbb{R}^+)),
\end{align}
and
\begin{align}
\varepsilon[\phi(v^k)-\phi(V_0)]\in L^2(0,T;W^{1,2}(\mathbb{R}^+)),
\end{align}
where $\hat u\in C^\infty(\mathbb{R}^+)$ is a smooth function such that $\hat u=U_0$ when $x=0$ and $\hat u=0$ when $x>1$.
\end{lemma}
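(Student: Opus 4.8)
The plan is to verify, for the $u$-component, the two defining ingredients of membership in $L^2(0,T;W^{1,2}(\mathbb{R}^+))$ separately — an $L^2(S_T)$ bound on the function $\phi(u^k)-\phi(\hat u)$ itself and an $L^2(S_T)$ bound on its spatial derivative — and then to upgrade $W^{1,2}$ to $W^{1,2}_0$ by checking that the trace at $x=0$ vanishes. The derivative bound is immediate: since $[\phi(u^k)-\phi(\hat u)]_x=\phi(u^k)_x-\phi(\hat u)_x$, Lemma~\ref{ldnb} already gives $\phi(u^k)_x\in L^2(S_T)$, while $\phi(\hat u)_x$ is a fixed smooth function supported in $[0,1]$ and so lies in $L^2(S_T)$ trivially.

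For the $L^2(S_T)$ bound on the function, I would interpolate between $L^\infty$ and $L^1$. By Corollary~\ref{lbb} we have $0\le u^k\le U_0$, hence $0\le\phi(u^k)\le\phi(U_0)$ and $\phi(u^k)\in L^\infty(S_T)$; by Corollary~\ref{lcb} the spatial norm $\int_{\mathbb{R}^+}\phi(u^k)(\cdot,t_0)\,{\rm d}x$ is bounded uniformly in $t_0\in[0,T]$. Combining these through $\int_{\mathbb{R}^+}\phi(u^k)^2\,{\rm d}x\le\|\phi(u^k)\|_{L^\infty}\int_{\mathbb{R}^+}\phi(u^k)\,{\rm d}x$ shows $\phi(u^k)\in L^\infty(0,T;L^2(\mathbb{R}^+))\subset L^2(S_T)$, and since $\phi(\hat u)$ is bounded with compact support, $\phi(u^k)-\phi(\hat u)\in L^2(S_T)$. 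Together with the derivative bound this yields $\phi(u^k)-\phi(\hat u)\in L^2(0,T;W^{1,2}(\mathbb{R}^+))$.

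To replace $W^{1,2}$ by $W^{1,2}_0$, I would invoke the weak-solution structure of (\ref{a}): the formulation requires $\phi(u^k)\in\phi(\hat u)+L^2(0,T;\Omega_J)$ for every $J>0$ (cf.\ property (i) of Theorem~\ref{tq}), so the trace of $\phi(u^k)-\phi(\hat u)$ at $x=0$ vanishes for a.e.\ $t$; since $W^{1,2}_0(\mathbb{R}^+)$ consists precisely of those $W^{1,2}(\mathbb{R}^+)$ functions with zero trace at $x=0$, this gives the claimed membership. The argument for $v^k$ is parallel and in fact slightly simpler: the spatial derivative of $\varepsilon[\phi(v^k)-\phi(V_0)]$ is $\varepsilon\phi(v^k)_x=\sqrt\varepsilon\,(\sqrt\varepsilon\,\phi(v^k)_x)$, which lies in $L^2(S_T)$ because $\sqrt\varepsilon\,\phi(v^k)_x\in L^2(S_T)$ by Lemma~\ref{ldnb}, while the same $L^1\cap L^\infty$ interpolation — now using the bound on $\int_{\mathbb{R}^+}|\phi(v^k)(\cdot,t_0)-\phi(V_0)|\,{\rm d}x$ from Corollary~\ref{lcb} together with $0\le v^k\le V_0$ — gives $\phi(v^k)-\phi(V_0)\in L^2(S_T)$; no boundary correction or subscript $0$ is needed here because the condition on $v$ at $x=0$ is of Neumann type, and the case $\varepsilon=0$ is trivial. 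The only genuinely delicate point is the passage from the \emph{local} information carried by $\Omega_J$ (vanishing at $x=0$ on bounded intervals) to the \emph{global} statement on $\mathbb{R}^+$; this is harmless here, since the gradient bound of Lemma~\ref{ldnb} and the uniform-in-time $L^1$ bound of Corollary~\ref{lcb} are themselves global, so the local structure is used only to supply the zero trace at $x=0$.
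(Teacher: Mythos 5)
Your proposal is correct and follows essentially the same route as the paper: the paper's (very terse) proof likewise combines the $L^\infty$ bound of Corollary~\ref{lbb}, the uniform-in-time $L^1$ bound of Corollary~\ref{lcb} (interpolated to give $L^2(S_T)$), and the gradient bound of Lemma~\ref{ldnb}. You simply spell out the $L^1$--$L^\infty$ interpolation and the zero-trace step for $W^{1,2}_0$ explicitly, which the paper leaves implicit.
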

\begin{proof}The result for $u^k$ follows from Corollary \ref{lbb} and Corollary \ref{lcb} which ensure that $\phi(u^k)-\phi(U_0)\in L^\infty(S_T)$ and $\phi(u^k)-\phi(U_0)\in L^1(S_T)$, together with Lemma \ref{ldnb} which ensures that $\phi(u^k)_x\in L^2(S_T)$. If $\varepsilon>0$, the estimates for $v^k$ can be proved likewise.
\end{proof}

We can now prove a convergence result for solutions $(u^k,v^k)$ of (\ref{a}) as $\varepsilon\rightarrow 0$.

\begin{lemma}\label{lest}
Let $k>0$ be fixed and $(u_\varepsilon^k,v_\varepsilon^k)$ be solution of (\ref{a}) satisfying (\ref{b}) with $\varepsilon>0$. Then there exist $(u_\star^k,v_\star^k)\in\left(L^\infty(S_T)\right)^2$ such that up to a subsequence, for each $J>0$
\begin{align*}
\phi(u_\varepsilon^k)&\rightarrow \phi(u_\star^k)\quad&&{\rm in}\quad L^2((0,J)\times(0,T)),\\
u_\varepsilon^k&\rightarrow u_\star^k\quad&&{\rm a.e.}\ {\rm in}\quad (0,J)\times(0,T),\\
\phi(v_\varepsilon^k)&\rightarrow \phi(v_\star^k)\quad&&{\rm in}\quad L^2((0,J)\times(0,T)),\\
v_\varepsilon^k&\rightarrow v_\star^k\quad&&{\rm a.e.}\ {\rm in}\quad (0,J)\times(0,T),\\
\phi(u_\varepsilon^k)-\phi(\hat u)&\rightharpoonup \phi(u_\star^k)-\phi(\hat u)\quad&&{\rm in}\quad L^2\left(0,T;W^{1,2}_0(\mathbb{R}^+)\right),
\end{align*}
as $\varepsilon\rightarrow0$, where $\hat u\in C^\infty(\mathbb R^+)$ is a smooth function that $\hat u=U_0$ when $x=0$ and $\hat u=0$ when $x>1$.
\end{lemma}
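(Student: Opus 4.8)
The plan is to obtain $(u_\star^k,v_\star^k)$ as a subsequential limit of $(u_\varepsilon^k,v_\varepsilon^k)$ by a compactness argument, exactly in the spirit of the proof of Theorem \ref{tq} but now letting $\varepsilon\to0$ with $k$ fixed, rather than $R\to\infty$. The crucial point is that every estimate used must be uniform in $\varepsilon$. First I would collect such estimates: Corollary \ref{lbb} gives $0\le u_\varepsilon^k\le U_0$ and $0\le v_\varepsilon^k\le V_0$, so that $\phi(u_\varepsilon^k),\phi(v_\varepsilon^k)$ are bounded in $L^\infty(S_T)$ independently of $\varepsilon$; Lemma \ref{lst} controls the space translates of $\phi(u_\varepsilon^k)$ and $\phi(v_\varepsilon^k)$ in $L^1((r,\infty))$ uniformly in $\varepsilon\ge0$; and Lemma \ref{ltt} controls their time translates in $L^2$ uniformly in $\varepsilon$.

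Next I would use these bounds to apply the Fr\'echet--Kolmogorov theorem (as cited in Theorem \ref{tq}) on each strip $(0,J)\times(0,T)$. Since $\phi(u_\varepsilon^k),\phi(v_\varepsilon^k)$ are uniformly bounded in $L^\infty$, the $L^1$ space-translate bound of Lemma \ref{lst} upgrades to an $L^2$ space-translate bound via $\|f\|_{L^2}^2\le\|f\|_{L^\infty}\|f\|_{L^1}$; the contribution of the boundary layer $(0,r)$ is made arbitrarily small by choosing $r$ small, again using the $L^\infty$ bound. Together with the $L^2$ time-translate bound of Lemma \ref{ltt}, the Fr\'echet--Kolmogorov criterion is met, so $\{\phi(u_\varepsilon^k)\}$ and $\{\phi(v_\varepsilon^k)\}$ are relatively compact in $L^2((0,J)\times(0,T))$. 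Extracting subsequences and using a diagonal argument over $J=1,2,\dots$, I obtain $\varepsilon_j\to0$ and functions $\chi_1,\chi_2$ with $\phi(u_{\varepsilon_j}^k)\to\chi_1$ and $\phi(v_{\varepsilon_j}^k)\to\chi_2$ strongly in $L^2((0,J)\times(0,T))$ and, after a further subsequence, a.e. in $S_T$.

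To identify the limits, I would use that $\phi$ is a continuous strictly increasing bijection of $[0,U_0]$ onto $[0,\phi(U_0)]$ (and of $[0,V_0]$ onto $[0,\phi(V_0)]$) by (\ref{phi}), so $\phi^{-1}$ is continuous there. Passing $\phi^{-1}$ through the a.e. limit of $\phi(u_{\varepsilon_j}^k)$ gives a.e. convergence $u_{\varepsilon_j}^k\to u_\star^k:=\phi^{-1}(\chi_1)$, and then $\chi_1=\phi(u_\star^k)$; likewise $v_{\varepsilon_j}^k\to v_\star^k:=\phi^{-1}(\chi_2)$ a.e. with $\chi_2=\phi(v_\star^k)$ (alternatively one may invoke Lemma \ref{lchi}). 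This yields the four claimed strong and a.e. convergences. For the final weak convergence, Lemma \ref{lwc} places $\phi(u_\varepsilon^k)-\phi(\hat u)$ in $L^2(0,T;W^{1,2}_0(\mathbb{R}^+))$, and Lemma \ref{ldnb} bounds $\phi(u_\varepsilon^k)_x$ in $L^2(S_T)$ uniformly in $\varepsilon$; combined with the uniform $L^\infty\cap L^1$ control from Corollaries \ref{lbb} and \ref{lcb}, the sequence is bounded in this Hilbert space, so a subsequence converges weakly, and the limit must be $\phi(u_\star^k)-\phi(\hat u)$ since the weak limit is unique and agrees with the strong $L^2_{loc}$ limit already identified.

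The main obstacle is the strong $L^2_{loc}$ compactness of $\phi(v_\varepsilon^k)$ \emph{uniformly as $\varepsilon\to0$}: the only gradient bound available for $v$ is $\varepsilon\iint_{S_T}|\phi(v_\varepsilon^k)_x|^2\le C$ (Lemma \ref{ldnb}), which degenerates as $\varepsilon\to0$ and so cannot control the space translates of $\phi(v_\varepsilon^k)$ in the limit. This is exactly why the $\varepsilon$-independent space-translate estimate of Lemma \ref{lst}, valid for $\varepsilon\ge0$ and proved without any gradient bound on $v$, is indispensable here; for the $u$-component the same conclusion could alternatively be reached directly from the non-degenerate bound on $\phi(u_\varepsilon^k)_x$.
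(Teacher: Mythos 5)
Your proposal is correct and follows essentially the same route as the paper: uniform-in-$\varepsilon$ bounds from Corollary \ref{lbb}, Lemma \ref{lkuv} and Lemma \ref{ldnb}, relative compactness in $L^2_{loc}$ via the translate estimates of Lemmas \ref{lst} and \ref{ltt} together with the Riesz--Fr\'echet--Kolmogorov theorem, identification of the a.e.\ limits through continuity of $\phi^{-1}$, and the weak convergence from Lemma \ref{lwc}. Your additional remarks (the $L^1$-to-$L^2$ upgrade of the translate bounds, the boundary layer near $x=0$, and the observation that the $v$-gradient bound degenerates as $\varepsilon\rightarrow0$, which is why the $\varepsilon$-independent Lemma \ref{lst} is essential) are details the paper leaves implicit but are consistent with its argument.
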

\begin{proof}It follows from Lemma \ref{lkuv}, Lemma \ref{ldnb} and Corollary \ref{lbb} that $\phi(u_\varepsilon^k)$ and $\phi(v_\varepsilon^k)-\phi(V_0)$ are bounded independently of $\varepsilon\ge0$ in ${L^2(S_T)}$. By Lemma \ref{lst} and Lemma \ref{ltt}, using the Riesz-Fr{\'e}chet-Kolmogorov Theorem \cite[Theorem 4.26]{sobs}, yield that the sets $\left\{\phi(v_\varepsilon^k)-\phi(V_0)\right\}_{\varepsilon>0}$ and $\left\{\phi(u_\varepsilon^k)\right\}_{\varepsilon>0}$ are each relatively compact in $L^2\left((0,J)\times(0,T)\right)$ for each $J>0$. The weak convergence of $\phi(u_\varepsilon^k)-\phi(\hat u)$ in $L^2\left(0,T;W^{1,2}_0(\mathbb{R}^+)\right)$ follows from Lemma \ref{lwc}. Then we know that $\phi(u_\varepsilon^k)\rightarrow \phi(u_\star^k)$ and $\phi(v_\varepsilon^k)\rightarrow \phi(v_\star^k)$ almost everywhere in $(0,J)\times(0,T)$, so since $\phi^{-1}$ is continuous, then we have
$u_\varepsilon^k\rightarrow \phi^{-1}(\phi(u_\star^k))$ and $v_\varepsilon^k\rightarrow \phi^{-1}(\phi(v_\star^k))$ almost everywhere in $(0,J)\times(0,T)$.
\end{proof}

Recall that
\begin{align}
\mathcal{F}_T:=\left\{\xi\in C^1(S_T):\ \xi(0,t)=\xi(\cdot,T)=0\  {\rm for}\ t\in(0,T)\ {\rm and}\ {\rm supp}\,\xi\subset[0,J]\times[0,T]\non \right. \\  \left.{\rm for}\ {\rm some}\ J>0 \right\}.\label{FT}
\end{align}
and
\begin{align*}
\Omega_J:=\left\{\alpha\in W^{1,2}(0,J)|\ \alpha=0 \ {\rm at}\ x=0\right\}.
\end{align*}
Lemma \ref{lu} and Lemma \ref{lest} enable the following result to be established.
\begin{theorem}\label{te0}
Let $\varepsilon=0$ and $k>0$. Then Problem (\ref{a}) has a unique weak solution $(u^k,v^k)\in (L^\infty(S_T))^2$ for each $J>0$ such that
\begin{itemize}

\item[{\rm (i)}]$\phi(u^k)\in\phi(\hat u)+L^2(0,T;\Omega_J)$, where $\hat u\in C^\infty(\mathbb R^+)$ is a smooth function that $\hat u=U_0$ when $x=0$ and $\hat u=0$ when $x>1$;
\item[{\rm (ii)}]$(u^k,v^k)$ satisfies
\begin{align}
&\int_{\mathbb{R}^+}u^k_{0}\xi(x,0){\rm d}x+\iint_{S_T}\xi_t u^k{\rm d}x{\rm d}t=\iint_{S_T}\xi_{x}\phi(u^k)_x{\rm d}x{\rm d}t+k\iint_{S_T}\xi u^kv^k{\rm d}x{\rm d}t,\label{w1}\\
&\int_{\mathbb{R}^+}v^k_0\xi(x,0){\rm d}t+\iint_{S_T}\xi_t v^k{\rm d}x{\rm d}t=k\iint_{S_T}\xi u^kv^k{\rm d}x{\rm d}t,\label{w2}
\end{align}
for all $\xi\in\mathcal{F}_T$.
\end{itemize}
\end{theorem}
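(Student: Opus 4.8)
The plan is to realise the $\varepsilon=0$ solution as a limit of the $\varepsilon>0$ solutions whose existence is guaranteed by Theorem \ref{tq}. For each $\varepsilon>0$, let $(u_\varepsilon^k,v_\varepsilon^k)$ be the weak solution of (\ref{a}) provided by Theorem \ref{tq}, which satisfies the uniform bounds $0\le u_\varepsilon^k\le U_0$, $0\le v_\varepsilon^k\le V_0$ of Corollary \ref{lbb}. Since the \textit{a priori} estimates in Lemmas \ref{lk}, \ref{lkuv}, \ref{ldnb}, \ref{lst} and \ref{ltt} all hold independently of $\varepsilon\ge0$, Lemma \ref{lest} applies directly: along a subsequence $\varepsilon\to0$ there exist limits $(u_\star^k,v_\star^k)\in(L^\infty(S_T))^2$ with $\phi(u_\varepsilon^k)\to\phi(u_\star^k)$ and $\phi(v_\varepsilon^k)\to\phi(v_\star^k)$ strongly in $L^2((0,J)\times(0,T))$, with $u_\varepsilon^k\to u_\star^k$ and $v_\varepsilon^k\to v_\star^k$ a.e. in $(0,J)\times(0,T)$ for every $J>0$, and with $\phi(u_\varepsilon^k)-\phi(\hat u)\rightharpoonup\phi(u_\star^k)-\phi(\hat u)$ weakly in $L^2(0,T;W^{1,2}_0(\mathbb{R}^+))$. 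I would then set $(u^k,v^k):=(u_\star^k,v_\star^k)$ and verify that it is the required weak solution.

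Next I would pass to the limit $\varepsilon\to0$ in each term of the weak formulation of (\ref{a}) for $\varepsilon>0$, tested against an arbitrary $\xi\in\mathcal{F}_T$ whose support lies in $[0,J]\times[0,T]$ for some $J$. The initial-data terms are fixed. The terms $\iint_{S_T}u_\varepsilon^k\xi_t$ and $\iint_{S_T}v_\varepsilon^k\xi_t$ converge by the a.e. convergence together with the uniform $L^\infty$ bounds and Dominated Convergence on the compact support of $\xi$. The diffusion term $\iint_{S_T}\phi(u_\varepsilon^k)_x\xi_x$ converges to $\iint_{S_T}\phi(u_\star^k)_x\xi_x$ by the weak $L^2$ convergence of $\phi(u_\varepsilon^k)_x$ paired with $\xi_x\in L^2({\rm supp}\,\xi)$. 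For the reaction term, the a.e. convergence $u_\varepsilon^kv_\varepsilon^k\to u_\star^kv_\star^k$ and the bound $u_\varepsilon^kv_\varepsilon^k\le U_0V_0$ give $k\iint_{S_T}\xi u_\varepsilon^kv_\varepsilon^k\to k\iint_{S_T}\xi u_\star^kv_\star^k$ again by Dominated Convergence.

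The point specific to $\varepsilon=0$ is the disappearance of the diffusion term in the $v$-equation. From Lemma \ref{ldnb} one has $\varepsilon\iint_{S_T}|\phi(v_\varepsilon^k)_x|^2\le C$ uniformly, so $\Vert\varepsilon\phi(v_\varepsilon^k)_x\Vert_{L^2(S_T)}=\varepsilon\Vert\phi(v_\varepsilon^k)_x\Vert_{L^2(S_T)}\le\sqrt{\varepsilon C}\to0$; hence $\iint_{S_T}\varepsilon\phi(v_\varepsilon^k)_x\xi_x\to0$ by Cauchy--Schwarz over ${\rm supp}\,\xi$, and the limiting $v$-equation (\ref{w2}) carries no diffusion term, as required. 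Collecting these limits shows $(u_\star^k,v_\star^k)$ satisfies (\ref{w1})--(\ref{w2}), while property (i) follows from the weak convergence above together with Lemma \ref{lwc}. Uniqueness is then immediate from the comparison principle Lemma \ref{lu} (equivalently Corollary \ref{lu223}), which covers $\varepsilon=0$: applying it to two solutions, each in turn as sub- and as supersolution, forces them to coincide; this also shows the whole family, not merely a subsequence, converges.

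I expect the main obstacle to be handling the two degenerate features simultaneously — justifying that the a.e. limit of the products equals the product of the limits in the reaction term (which relies crucially on the strong a.e. convergence supplied by the compactness in Lemma \ref{lest}, not merely on weak convergence) while separately showing the $\varepsilon$-weighted diffusion term vanishes. Since all the uniform-in-$\varepsilon$ estimates have already been established in the preceding lemmas, however, the argument is essentially an assembly of those results rather than a genuinely new difficulty.
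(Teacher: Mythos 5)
Your proposal is correct and follows essentially the same route as the paper, which likewise obtains existence by using Lemma \ref{lest} to pass to the limit along a subsequence as $\varepsilon\rightarrow0$ and deduces uniqueness from the comparison principle of Lemma \ref{lu}. Your write-up simply fills in the term-by-term limit passage (including the vanishing of the $\varepsilon$-weighted diffusion term via Lemma \ref{ldnb}) that the paper leaves implicit.
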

\begin{proof}The existence of a solution $(u^k,v^k)$ to (\ref{w1})-(\ref{w2}) follows by using Lemma \ref{lest} to pass to the limit along a subsequence as $\varepsilon\rightarrow0$. The uniqueness of $(u^k,v^k)$ follows from the comparison principle proved in Lemma \ref{lu}.\end{proof}

\section{Half-line case: limit problem for (\ref{a}) as $k\rightarrow\infty$}
We now establish the existence of limits of solutions of (\ref{a}) as $k\rightarrow\infty$, both when $\varepsilon>0$ and $\varepsilon=0$, by using the \textit{a priori} estimates of the previous section.

\begin{lemma}
Let $\varepsilon\ge0$ be fixed and $(u^k,v^k)$ be weak solutions of (\ref{a}) satisfying (\ref{b}) with $k>0$. Then there exists $(u,v)\in(L^\infty(S_T))^2$ such that up to a subsequence, for each $J>0$ that
\begin{align*}
\phi(u^k)&\rightarrow \phi(u)\quad&&{\rm in}\quad L^2((0,J)\times(0,T)),\\
u^k&\rightarrow u\quad&&{\rm a.e.}\ {\rm in}\quad (0,J)\times(0,T),\\
\phi(v^k)&\rightarrow \phi(v)\quad&&{\rm in}\quad L^2((0,J)\times(0,T)),\\
v^k&\rightarrow v\quad&&{\rm a.e.}\ {\rm in}\quad (0,J)\times(0,T),\\
\phi(u^k)-\phi(\hat u)&\rightharpoonup \phi(u)-\phi(\hat u)\quad&&{\rm in}\quad L^2\left(0,T;W^{1,2}_0(\mathbb{R}^+)\right),
\end{align*}
and for $\varepsilon>0$
\begin{align*}
\phi(v^k)-\phi(V_0)&\rightharpoonup \phi(v)-\phi(V_0)\quad&&{\rm in}\quad L^2\left(0,T;W^{1,2}(\mathbb{R}^+)\right),
\end{align*}
as $k\rightarrow\infty$, where $\hat u\in C^\infty(\mathbb R^+)$ is a smooth function that $\hat u=U_0$ when $x=0$ and $\hat u=0$ when $x>1$.
\label{lc}
\end{lemma}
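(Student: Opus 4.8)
The plan is to follow the structure of the proof of Lemma \ref{lest}, replacing the limit $\varepsilon\to0$ by $k\to\infty$. The crucial observation is that every \textit{a priori} bound we have assembled — the $L^\infty$ bounds of Corollary \ref{lbb}, the $L^1$ bounds of Lemma \ref{lkuv} and Corollary \ref{lcb}, the gradient bounds of Lemma \ref{ldnb}, the space- and time-translate estimates of Lemmas \ref{lst} and \ref{ltt}, and the regularity statement of Lemma \ref{lwc} — is uniform in both $\varepsilon\ge0$ and $k>0$. Hence exactly the same compactness machinery is available here as $k\to\infty$.

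First I would record that $\phi(u^k)$ and $\phi(v^k)-\phi(V_0)$ are bounded in $L^2(S_T)$ independently of $k$: indeed $0\le\phi(u^k)\le\phi(U_0)$ and $|\phi(v^k)-\phi(V_0)|\le\phi(V_0)$ by monotonicity of $\phi$ and Corollary \ref{lbb}, while their $L^1$ norms are controlled by Corollary \ref{lcb}, so $\int|g|^2\le\Vert g\Vert_{L^\infty}\int|g|$ gives the $L^2$ bound. Next I would upgrade the $L^1$ space-translate estimate of Lemma \ref{lst} to an $L^2$ estimate using the $L^\infty$ bound (again via $\int|g|^2\le\Vert g\Vert_{L^\infty}\int|g|$), combine it with the time-translate estimate of Lemma \ref{ltt}, and apply the Riesz--Fr\'echet--Kolmogorov theorem \cite[Theorem 4.26]{sobs} to conclude that $\{\phi(u^k)\}_{k>0}$ and $\{\phi(v^k)-\phi(V_0)\}_{k>0}$ are each relatively compact in $L^2((0,J)\times(0,T))$ for every $J>0$. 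A diagonal argument in $J$ then produces a single subsequence along which $\phi(u^k)\to\chi_1$ and $\phi(v^k)\to\chi_2$ strongly in $L^2((0,J)\times(0,T))$ and almost everywhere.

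I would then set $u:=\phi^{-1}(\chi_1)$ and $v:=\phi^{-1}(\chi_2)$; since $\phi$ is continuous and strictly increasing, $\phi^{-1}$ is continuous, so $\chi_1=\phi(u)$, $\chi_2=\phi(v)$, and the a.e.\ convergence $\phi(u^k)\to\phi(u)$, $\phi(v^k)\to\phi(v)$ yields $u^k\to u$, $v^k\to v$ a.e., with $(u,v)\in(L^\infty(S_T))^2$ by Corollary \ref{lbb}. For the weak gradient convergence, Lemma \ref{lwc} together with the $L^2$ gradient bound of Lemma \ref{ldnb} shows that $\phi(u^k)-\phi(\hat u)$ is bounded in $L^2(0,T;W^{1,2}_0(\mathbb{R}^+))$ independently of $k$, so a further subsequence converges weakly there; the weak limit is forced to equal $\phi(u)-\phi(\hat u)$ because it must coincide with the already-established strong $L^2$ limit. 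When $\varepsilon>0$, the same reasoning applied to $\phi(v^k)-\phi(V_0)$ — now using the fixed-$\varepsilon$ bound $\iint_{S_T}|\phi(v^k)_x|^2\le C/\varepsilon$ from Lemma \ref{ldnb} and Lemma \ref{lwc} — gives the weak convergence in $L^2(0,T;W^{1,2}(\mathbb{R}^+))$.

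The step I expect to be the main obstacle is the compactness near the boundary $x=0$, especially for $v^k$ when $\varepsilon=0$, where no spatial gradient estimate is available and Lemma \ref{lst} only controls space translates on $(r,\infty)$. I would deal with this by a truncation argument: on $(0,r)\times(0,T)$ the $L^2$ norm of $\phi(v^k)-\phi(V_0)$ is at most $\phi(V_0)\sqrt{rT}$, which is uniformly small as $r\to0$, while on $(r,J)\times(0,T)$ the translate estimates of Lemmas \ref{lst} and \ref{ltt} apply; letting $r\to0$ along a diagonal subsequence recovers relative compactness on the whole of $(0,J)\times(0,T)$. For $u^k$ this difficulty does not arise, since the gradient bound of Lemma \ref{ldnb} holds on all of $S_T$ and already controls space translates up to $x=0$.
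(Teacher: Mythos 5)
Your proposal is correct and follows essentially the same route as the paper, which simply declares the proof ``directly analogous to that of Lemma \ref{lest}'': uniform-in-$k$ $L^2$ bounds together with the translate estimates of Lemmas \ref{lst} and \ref{ltt} feed into the Riesz--Fr\'echet--Kolmogorov theorem, a.e.\ convergence and continuity of $\phi^{-1}$ identify $u,v$, and Lemma \ref{lwc} gives the weak gradient convergence. Your truncation argument near $x=0$ fills in a point the paper leaves implicit, since Lemma \ref{lst} only controls space translates on $(r,\infty)$.
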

\begin{proof}The proof is directly analogous to that of Lemma \ref{lest}, using bounds independent of $k$ in place of bounds independent of $\varepsilon$. The weak convergence of $\phi(v^k)-\phi(V_0)$ in $L^2\left(0,T;W^{1,2}(\mathbb{R}^+)\right)$ follows from Lemma \ref{lwc}.  \end{proof}

The following segregation result is a key to characterisation of the limits $u,v$ in Lemma \ref{lc}.
\begin{lemma}\label{lkl}
Let $\varepsilon\ge0$ and $(u,v)$ be as in Lemma \ref{lc}. Then
\begin{align}
uv=0\ a.e.\ in\ S_T.
\end{align}

\end{lemma}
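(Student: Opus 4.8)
The plan is to extract everything from the key $k$-uniform bound on the reaction term established in Lemma \ref{lk}. That lemma provides a constant $C>0$, independent of $\varepsilon\ge0$ and $k>0$, such that $\iint_{S_T}ku^kv^k{\rm d}x{\rm d}t\le C$ for every solution $(u^k,v^k)$ of (\ref{a}). Dividing by $k$ gives
\begin{align*}
\iint_{S_T}u^kv^k{\rm d}x{\rm d}t\le\frac{C}{k}\to 0\quad{\rm as}\ k\to\infty.
\end{align*}
The whole argument then amounts to passing to the limit $k\to\infty$ in this integral and identifying the limit of the left-hand side as $\iint_{S_T}uv{\rm d}x{\rm d}t$, which is forced to be zero.

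First I would recall from Lemma \ref{lc} that, along the subsequence selected there, $u^k\to u$ and $v^k\to v$ almost everywhere in $(0,J)\times(0,T)$ for every $J>0$; since these strips exhaust $S_T$, we obtain $u^kv^k\to uv$ almost everywhere in $S_T$. By Corollary \ref{lbb} each product $u^kv^k$ is nonnegative, and passing to the pointwise limit also yields $u\ge0$ and $v\ge0$, hence $uv\ge0$ almost everywhere. I would then apply Fatou's Lemma directly on $S_T$ to the nonnegative sequence $\{u^kv^k\}$:
\begin{align*}
\iint_{S_T}uv{\rm d}x{\rm d}t\le\liminf_{k\to\infty}\iint_{S_T}u^kv^k{\rm d}x{\rm d}t\le\liminf_{k\to\infty}\frac{C}{k}=0.
\end{align*}
Since $uv\ge0$ and its integral over $S_T$ vanishes, we conclude $uv=0$ almost everywhere in $S_T$, as claimed.

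I do not anticipate a serious obstacle here: the entire weight of the result rests on the $k$-independent estimate of Lemma \ref{lk}, which is exactly what makes the reaction product $u^kv^k$ negligible in the limit. The only minor technical point to handle with care is that the almost-everywhere convergence in Lemma \ref{lc} is phrased on the bounded strips $(0,J)\times(0,T)$ rather than on all of $S_T$. Using Fatou directly on $S_T$ (rather than first working with the local integrals over $(0,J)\times(0,T)$ and then letting $J\to\infty$) sidesteps this cleanly, since almost-everywhere convergence on each strip immediately gives almost-everywhere convergence on their union $S_T$. As an equally valid alternative, one could invoke the Dominated Convergence Theorem on each strip, using the uniform bound $0\le u^kv^k\le U_0V_0$ from Corollary \ref{lbb}, to obtain $\iint_{(0,J)\times(0,T)}uv{\rm d}x{\rm d}t=0$ for every $J$, and then let $J\to\infty$.
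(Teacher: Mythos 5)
Your argument is correct and follows essentially the same route as the paper: both rest on dividing the $k$-uniform bound $\iint_{S_T}ku^kv^k\,{\rm d}x{\rm d}t\le C$ of Lemma \ref{lk} by $k$, and then passing to the limit using the almost-everywhere convergence of Lemma \ref{lc} together with the uniform bounds of Corollary \ref{lbb}. The only cosmetic difference is that you invoke Fatou's Lemma where the paper cites the Dominated Convergence Theorem, and you yourself note the DCT variant as an equivalent alternative.
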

\begin{proof}It follows from Lemma \ref{lk} and Lemma \ref{lc} that $uv=0$ almost everywhere in $S_T$ combining with Lemma \ref{lbb} and using  Lebesgue's Dominated Convergence Theorem.
\end{proof}

To derive the limit problem, we set
\begin{align}
w^k:=u^k-v^k,\quad w:=u-v.
\label{d}
\end{align}
Then it follows from Lemma \ref{lc} and Lemma \ref{lkl} that as a sequence $k_n\rightarrow\infty$,
\begin{align*}
w^{k_n}\rightarrow w\quad {\rm in}\ L^2\left((0,J)\times(0,T)\right)\ {\rm for}\ {\rm all}\ J>0\ {\rm and }\ {\rm a.e}\ {\rm in}\ S_T,
\end{align*}
and that
\begin{align*}
u=w^+,\quad v=-w^-,
\end{align*}
where $s^+=\max\left\{0,s\right\}$ and $s^-=\min\left\{0,s\right\}$.

Next result follows using Lemma \ref{lc} and the fact that $u_0^k\rightarrow u^\infty_0$ and $v_0^k\rightarrow v^\infty_0$ in $L^1({\mathbb{R}^+})$ as $k\rightarrow\infty$.
\begin{lemma}
Let $\varepsilon\ge0$ and $(u,v)$ be as in Lemma \ref{lc}. Then
\begin{align}
\iint_{S_T}(u-v)\xi_t{\rm d}x{\rm d}t+\int_{\mathbb{R}^+}(u^\infty_0-v^\infty_0)\xi(x,0){\rm d}x=\iint_{S_T}(\phi(u)-\varepsilon\phi(v))_x\xi_{x}{\rm d}x{\rm d}t,
\label{hll}
\end{align}
for all $\xi\in\mathcal{F}_T$, where $\mathcal{F}_T$ as in (\ref{FT}).
\label{lw2}
\end{lemma}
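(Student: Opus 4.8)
The plan is to obtain the weak identity (\ref{hll}) by subtracting the two weak equations satisfied by $(u^k,v^k)$ for finite $k$ and then passing to the limit along the subsequence provided by Lemma \ref{lc}. The decisive structural observation is that the reaction term $k\iint_{S_T}\xi u^kv^k{\rm d}x{\rm d}t$ --- which is only known to be bounded in $L^1(S_T)$ via Lemma \ref{lk} and has no controllable limit --- enters the weak formulations for both $u^k$ and $v^k$ with the same sign, and so cancels exactly upon subtraction.

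First I would record, for each fixed $k>0$ and each $\xi\in\mathcal{F}_T$, the two weak formulations from Theorem \ref{tq} (when $\varepsilon>0$) and Theorem \ref{te0} (when $\varepsilon=0$). Subtracting the equation for $v^k$ from that for $u^k$ and using that the reaction terms coincide yields the $k$-independent identity
\begin{align*}
\int_{\mathbb{R}^+}(u_0^k-v_0^k)\xi(x,0){\rm d}x+\iint_{S_T}(u^k-v^k)\xi_t{\rm d}x{\rm d}t=\iint_{S_T}\big(\phi(u^k)-\varepsilon\phi(v^k)\big)_x\xi_x{\rm d}x{\rm d}t,
\end{align*}
with $w^k=u^k-v^k$ as in (\ref{d}). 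Because every $\xi\in\mathcal{F}_T$ is supported in some $[0,J]\times[0,T]$, all integrals here reduce to integrals over the bounded set $(0,J)\times(0,T)$, so the unboundedness of the spatial domain causes no difficulty.

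Next I would pass to the limit $k\rightarrow\infty$ term by term. For the initial-data term, since $\xi(\cdot,0)$ is continuous with compact support, hence bounded, and $u_0^k\rightarrow u_0^\infty$, $v_0^k\rightarrow v_0^\infty$ in $L^1(\mathbb{R}^+)$, we get $\int_{\mathbb{R}^+}(u_0^k-v_0^k)\xi(x,0){\rm d}x\rightarrow\int_{\mathbb{R}^+}(u_0^\infty-v_0^\infty)\xi(x,0){\rm d}x$. For the time-derivative term, Lemma \ref{lc} gives $u^k\rightarrow u$, $v^k\rightarrow v$ in $L^2((0,J)\times(0,T))$ and a.e., while $\xi_t$ is bounded with compact support, so $\iint_{S_T}(u^k-v^k)\xi_t{\rm d}x{\rm d}t\rightarrow\iint_{S_T}(u-v)\xi_t{\rm d}x{\rm d}t$, invoking the uniform bound (\ref{b}) and dominated convergence. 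For the diffusion term on the right, Lemma \ref{lc} provides $\phi(u^k)-\phi(\hat u)\rightharpoonup\phi(u)-\phi(\hat u)$ weakly in $L^2(0,T;W^{1,2}_0(\mathbb{R}^+))$, whence $\phi(u^k)_x\rightharpoonup\phi(u)_x$ weakly in $L^2$ on $(0,J)\times(0,T)$, and pairing against $\xi_x\in L^2$ of compact support passes the limit. When $\varepsilon>0$ the same argument applies to $\varepsilon\phi(v^k)_x$ using the weak convergence $\phi(v^k)-\phi(V_0)\rightharpoonup\phi(v)-\phi(V_0)$ in $L^2(0,T;W^{1,2}(\mathbb{R}^+))$ from Lemma \ref{lc}, and when $\varepsilon=0$ this term is identically zero by (\ref{w2}), so no convergence is needed.

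The main obstacle --- and really the only conceptual content --- is the cancellation of the reaction terms: individually $ku^kv^k$ cannot be passed to the limit, but it does not need to be, precisely because it is common to both equations. Once this is recognised, the remainder is a routine matching of strong $L^2_{loc}$ (respectively $L^1$) convergence for the zeroth-order and initial terms with weak $L^2$ convergence for the first-order diffusion terms, exploiting the compact support of the test functions to confine everything to a bounded region.
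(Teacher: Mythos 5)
Your proposal is correct and follows essentially the same route the paper intends: the paper simply remarks that the result follows from Lemma \ref{lc} and the $L^1$ convergence of the initial data, and your argument fills in exactly those details --- subtracting the two weak formulations so that the reaction terms $k\iint_{S_T}\xi u^kv^k\,{\rm d}x{\rm d}t$ cancel, then passing to the limit term by term using the strong $L^2_{loc}$ and weak $L^2(0,T;W^{1,2})$ convergences of Lemma \ref{lc} together with the compact support of $\xi$. The identification of the cancellation of the uncontrolled reaction term as the key structural point is precisely the right observation.
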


Now define
\begin{align}
\mathcal{D}(s):=\left\{\begin{aligned}
&\phi(s)\quad &&s\ge0,\\
&-\varepsilon\phi(-s)\quad &&s< 0,\end{aligned}\right.
\label{c0}
\end{align}
and the limit problem
\begin{align}
\left\{\begin{aligned}
&w_t=\mathcal{D}(w)_{xx},\quad &&{\rm in}\ S_T,\\
&w(x,0)=w_0(x):=-V_0, &&{\rm for}\ x>0,\\
&w(0,t)=U_0,&&{\rm for}\ t\in (0,T).
\end{aligned}\right.
\label{c}
\end{align}

\begin{definition}
A function $w$ is a weak solution of (\ref{c}) if
\begin{itemize}
\item[{\rm(i)}]$w\in L^\infty(S_T)$,
\item[{\rm(ii)}]$\mathcal{D}(w)\in\mathcal{D}(\hat w)+L^{2}(0,T;W^{1,2}_0(\mathbb{R}^+))$, where $\hat w\in C^\infty(\mathbb{R}^+)$ is a smooth function with $\hat w=U_0$ when $x=0$ and $\hat w=-V_0$ when $x>1$,
\item[{\rm(iii)}]$w$ satisfies
\begin{align}
\int_{\mathbb{R}^+}w_0(x)\xi(x,0){\rm d}x+\iint_{S_T}w\xi_t{\rm d}x{\rm d}t=\iint_{S_T}\mathcal{D}(w)_x\xi_x{\rm d}x{\rm d}t.
\label{au}
\end{align}
for all $\xi\in\mathcal{F}_T$, where $\mathcal{F}_T$ as in (\ref{FT}).
\end{itemize}
\label{au1}
\end{definition}

\begin{theorem}
Let $\varepsilon\ge0$. The function $w$ defined in (\ref{d}) is a weak solution of problem (\ref{c}) and the whole sequence $(u^k,v^k)$ in Lemma \ref{lc} converges to $(w^+,-w^-)$.
\label{tu}
\end{theorem}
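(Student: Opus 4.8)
The plan is to verify that $w=u-v$ satisfies the three conditions of Definition \ref{au1}, the crux being to recognise that the two separate nonlinear diffusion terms collapse into the single function $\mathcal{D}$, and then to upgrade the subsequential convergence of Lemma \ref{lc} to full-sequence convergence using uniqueness for the limit problem.

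The first and most important step is to establish the pointwise identity $\mathcal{D}(w)=\phi(u)-\varepsilon\phi(v)$ almost everywhere in $S_T$. As recorded after (\ref{d}), the segregation result Lemma \ref{lkl} ($uv=0$) together with nonnegativity from Corollary \ref{lbb} gives $u=w^+$ and $v=-w^-$. Splitting $S_T$ into the regions $\{w\ge 0\}$ and $\{w<0\}$: on $\{w\ge 0\}$ one has $v=0$, so $\phi(u)-\varepsilon\phi(v)=\phi(w)=\mathcal{D}(w)$ using $\phi(0)=0$; on $\{w<0\}$ one has $u=0$, so $\phi(u)-\varepsilon\phi(v)=-\varepsilon\phi(-w)=\mathcal{D}(w)$. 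This identity is exactly what converts the vector convergence into the scalar limit equation.

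With this in hand, condition (iii) of Definition \ref{au1} is immediate: the initial datum $w_0=u_0^\infty-v_0^\infty$ equals $-V_0$ for $x>0$ by the definitions of $u_0^\infty,v_0^\infty$, and substituting $\mathcal{D}(w)=\phi(u)-\varepsilon\phi(v)$ into the weak identity (\ref{hll}) of Lemma \ref{lw2} produces (\ref{au}) verbatim. Condition (i), $w\in L^\infty(S_T)$, follows from $u,v\in L^\infty(S_T)$. For condition (ii) I would use Lemma \ref{lc}: the weak limit $\phi(u)-\phi(\hat u)$ lies in $L^2(0,T;W^{1,2}_0(\mathbb{R}^+))$ and, for $\varepsilon>0$, $\varepsilon[\phi(v)-\phi(V_0)]$ lies in $L^2(0,T;W^{1,2}(\mathbb{R}^+))$, so that $\mathcal{D}(w)_x=\phi(u)_x-\varepsilon\phi(v)_x\in L^2(S_T)$ by Lemma \ref{ldnb}, while the required affine membership $\mathcal{D}(w)\in\mathcal{D}(\hat w)+L^2(0,T;W^{1,2}_0(\mathbb{R}^+))$ (equivalently, the vanishing of the trace of $\mathcal{D}(w)-\mathcal{D}(\hat w)$ at $x=0$) is checked from the Dirichlet trace $\phi(u)(0,\cdot)=\phi(U_0)$ carried by the limit together with the decay encoded in Corollary \ref{lcb}.

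Finally, to pass from a subsequence to the whole sequence, I would invoke uniqueness of the weak solution of (\ref{c}) (Theorem \ref{tu3}): the a priori bounds underlying Lemma \ref{lc} guarantee that every subsequence of $(u^k,v^k)$ has a further subsequence converging a.e.\ and in $L^2_{\mathrm{loc}}$ to a pair $(w^+,-w^-)$ with $w$ a weak solution of (\ref{c}), and uniqueness forces this $w$ to be the same each time, so a standard sub-subsequence argument yields convergence of the entire sequence to $(w^+,-w^-)$. The step I expect to be the main obstacle is the rigorous verification of condition (ii) when $\varepsilon>0$: there the Dirichlet condition is imposed on $u$ at $x=0$ but only a Neumann-type condition on $v$, so showing that $\varepsilon\phi(v)$ has vanishing trace at the boundary—and hence that $\mathcal{D}(w)$ has the correct trace $\phi(U_0)$—requires extracting, from the segregation $uv=0$ and the continuity of the penetrating reactant near $x=0$, that $v=0$ in a neighbourhood of the boundary. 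The $\varepsilon=0$ case is comparatively clean, since $\mathcal{D}(w)=\phi(u)$ and $\mathcal{D}(\hat w)-\phi(\hat u)$ is a fixed smooth function vanishing at the origin; everywhere else the limit passage in the nonlinear terms is handled by the strong $L^2_{\mathrm{loc}}$ and a.e.\ convergences already furnished by Lemma \ref{lc}.
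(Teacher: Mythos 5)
Your proposal is correct and follows essentially the same route as the paper: the paper's own proof simply states that existence is a straightforward consequence of Definition \ref{au1} and Lemma \ref{lw2} (which is precisely your identity $\mathcal{D}(w)=\phi(u)-\varepsilon\phi(v)$ obtained from segregation, substituted into (\ref{hll})), and that whole-sequence convergence follows from the uniqueness result of Theorem \ref{tu3} via the standard sub-subsequence argument you describe. Your write-up is merely more explicit, particularly about condition (ii) and the boundary trace, which the paper leaves implicit.
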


\begin{proof}The existence of a weak solution is a straightforward consequence of Definition \ref{au1} and Lemma \ref{lw2}. The fact that the whole sequence $(u^k,v^k)$ converges to $(w^+,-w^-)$ follows from the uniqueness results proved in Theorem \ref{tu3}.\end{proof}

Now we prove the uniqueness of the weak solution of (\ref{c}). The proof is inspired by \cite[Proposition 5]{wound}, but here we use a different auxiliary function and a different problem to obtain a useful family of test functions.
First, if we choose a smooth test function $\hat\xi\in C^\infty_0(\mathbb{R}^+\times[0,T])$, the weak solution $w$ satisfies
\begin{itemize}
\item[{\rm(i)}]$w\in L^\infty(S_T)$,
\item[{\rm(ii)}]$\mathcal{D}(w)\in\mathcal{D}(\hat w)+L^{2}(0,T;W^{1,2}_0(\mathbb{R}^+))$, where $\hat w\in C^\infty(\mathbb{R}^+)$ is a smooth function with $\hat w=U_0$ when $x=0$ and $\hat w=-V_0$ when $x>1$,
\item[{\rm(iii)}]$w$ satisfies
\begin{align}
\int_{\mathbb{R}^+}w_0(x)\hat\xi(x,0){\rm d}x+\iint_{S_T}w\hat\xi_t{\rm d}x{\rm d}t=\iint_{S_T}\mathcal{D}(w)_x\hat\xi_x{\rm d}x{\rm d}t.
\label{au111}
\end{align}
\end{itemize}

\begin{theorem}\label{tu211}
Let $\varepsilon\ge0$ and consider two solutions $w,\tilde w$ of problem (\ref{c}) with initial data $w_0,\tilde w_0$ respectively, then
\begin{align}
\iint_{S_T}|w-\tilde w|{\rm d}x{\rm d}t\le C(T)\int_{\mathbb{R}^+}|w_0-\tilde w_0|{\rm d}x,
\end{align}
and there exists at most one solution of problem (\ref{c}) for given initial function $w_0$.
\end{theorem}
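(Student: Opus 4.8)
The plan is to prove the $L^1$-contraction by a duality (Holmgren--Oleinik) argument, from which uniqueness is immediate on taking $w_0=\tilde w_0$. Applying the weak formulation (\ref{au111}) to $w$ and to $\tilde w$ with the same test function and subtracting, I would first note that $\mathcal{D}(w)-\mathcal{D}(\tilde w)$ vanishes at $x=0$ (both solutions take the value $\mathcal{D}(U_0)$ there) and lies in $L^2(0,T;W^{1,2}_0(\mathbb{R}^+))$, so an integration by parts in $x$ moves both spatial derivatives onto the test function with no boundary contribution, giving
\begin{align*}
\iint_{S_T}(w-\tilde w)\big(\xi_t+a\,\xi_{xx}\big){\rm d}x{\rm d}t=-\int_{\mathbb{R}^+}(w_0-\tilde w_0)\xi(x,0){\rm d}x,
\end{align*}
where I set $a:=\big(\mathcal{D}(w)-\mathcal{D}(\tilde w)\big)/(w-\tilde w)$ on $\{w\ne\tilde w\}$ and $a:=0$ on $\{w=\tilde w\}$. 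By the Mean Value Theorem and the monotonicity of $\mathcal{D}$, the coefficient satisfies $0\le a\le M$ with $M=\max\{\phi'(U_0),\varepsilon\phi'(V_0)\}$; note that $a$ degenerates wherever $w=\tilde w$, wherever $w=0$, and (when $\varepsilon=0$) on the whole set $\{w<0,\ \tilde w<0\}$, which is precisely the degeneracy the duality method is designed to absorb. The a priori bounds of the previous sections give $w-\tilde w\in L^\infty(0,T;L^1(\mathbb{R}^+))\cap L^\infty(S_T)$, hence $w-\tilde w\in L^2(S_T)$.

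Next I would introduce, for a fixed smooth $g$ with $0\le g\le1$, compact support in $x$ and $g(0,\cdot)=0$, the backward adjoint problem
\begin{align*}
\xi_t+a_n\,\xi_{xx}=g\quad{\rm in}\ S_T,\qquad \xi(0,t)=0,\qquad \xi(\cdot,T)=0,
\end{align*}
where $a_n$ is smooth with $\tfrac1n\le a_n\le M+1$ and approximates $a$ (construction below). Since $a_n$ is smooth and uniformly parabolic, this has a smooth solution $\xi_n$ (obtained on expanding intervals $(0,N)$ with $\xi_n(N,\cdot)=0$ and passing to the limit using the uniform bounds). Two $n$-independent estimates would then be established. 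First, comparison with $\|g\|_\infty(T-t)$ yields the maximum-principle bound $\|\xi_n\|_{L^\infty(S_T)}\le T\|g\|_\infty\le T$. Second, multiplying the equation by $\xi_{n,xx}$, integrating, and using $\xi_n(0,\cdot)=\xi_n(\cdot,T)=0$ gives
\begin{align*}
\frac12\int_{\mathbb{R}^+}\xi_{n,x}(x,0)^2{\rm d}x+\iint_{S_T}a_n\,\xi_{n,xx}^2\,{\rm d}x{\rm d}t=\iint_{S_T}g\,\xi_{n,xx}\,{\rm d}x{\rm d}t=\iint_{S_T}g_{xx}\,\xi_n\,{\rm d}x{\rm d}t,
\end{align*}
where the last equality integrates by parts twice in $x$ (the boundary terms vanish because $g(0,\cdot)=0$ and $\xi_n(0,\cdot)=0$). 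The right-hand side is at most $\|g_{xx}\|_{L^2}\|\xi_n\|_{L^2({\rm supp}\,g)}\le C(g,T)$, so $\iint_{S_T}a_n\xi_{n,xx}^2\le C(g,T)$ uniformly in $n$.

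With these bounds I would substitute $\xi_n$ into the difference identity; using $\xi_{n,t}=g-a_n\xi_{n,xx}$ gives
\begin{align*}
\iint_{S_T}(w-\tilde w)g\,{\rm d}x{\rm d}t=\iint_{S_T}(w-\tilde w)(a_n-a)\xi_{n,xx}\,{\rm d}x{\rm d}t-\int_{\mathbb{R}^+}(w_0-\tilde w_0)\xi_n(x,0){\rm d}x,
\end{align*}
and the error term is controlled by Cauchy--Schwarz,
\begin{align*}
\left|\iint_{S_T}(w-\tilde w)(a_n-a)\xi_{n,xx}\right|\le\left(\iint_{S_T}\frac{(a_n-a)^2}{a_n}(w-\tilde w)^2\right)^{1/2}\left(\iint_{S_T}a_n\,\xi_{n,xx}^2\right)^{1/2}.
\end{align*}
I would choose $a_n=\tilde a_n+\tfrac1n$, where $\tilde a_n$ is smooth with $0\le\tilde a_n\le M$ and $\iint_{S_T}(\tilde a_n-a)^2(w-\tilde w)^2\le n^{-2}$ (an approximation in the finite weighted measure $(w-\tilde w)^2{\rm d}x{\rm d}t$). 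Then $\tfrac{(a_n-a)^2}{a_n}\le2n(\tilde a_n-a)^2+\tfrac2n$, so the first factor is bounded by $2/n+\tfrac2n\iint_{S_T}(w-\tilde w)^2\to0$; since the second factor is bounded, the error term vanishes as $n\to\infty$. Letting $n\to\infty$ and using $\|\xi_n(\cdot,0)\|_\infty\le T$ yields $\big|\iint_{S_T}(w-\tilde w)g\big|\le T\int_{\mathbb{R}^+}|w_0-\tilde w_0|{\rm d}x$ for every admissible $g$; taking the supremum over such $g$ approximating ${\rm sgn}(w-\tilde w)$ (legitimate since $w-\tilde w\in L^\infty(0,T;L^1(\mathbb{R}^+))$) gives the contraction with $C(T)=T$, and uniqueness follows by setting $w_0=\tilde w_0$.

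The main obstacle I anticipate is the simultaneous control of the regularized coefficient: $a_n$ must be smooth and bounded below by $\tfrac1n$ (to solve and estimate the adjoint problem) yet approximate $a$ fast enough in the weighted $L^2$ space so that $\tfrac{(a_n-a)^2}{a_n}$ is negligible against the $n$-uniform energy bound. A secondary difficulty is the unboundedness of the domain: since $\xi_n$ is not genuinely compactly supported, its use in (\ref{au111}) requires a spatial cut-off and a limiting argument whose remainder is absorbed using the decay of $\xi_n$ and the $L^1$-in-space bounds on $w-\tilde w$ and $w_0-\tilde w_0$; throughout, the vanishing of $\mathcal{D}(w)-\mathcal{D}(\tilde w)$ at $x=0$ is what makes the boundary contributions disappear.
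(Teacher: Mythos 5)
Your proposal is correct and follows essentially the same duality (Holmgren--Ole\u{\i}nik) route as the paper: the same coefficient $a=(\mathcal{D}(w)-\mathcal{D}(\tilde w))/(w-\tilde w)$, the same regularisation $a_n$ bounded below by $\tfrac1n$, the same backward adjoint problem with the maximum-principle bound and the $n$-uniform energy estimate $\iint_{S_T}a_n\xi_{nxx}^2\le C$, the same Cauchy--Schwarz splitting of the error term via $\tfrac{a_n-a}{\sqrt{a_n}}\cdot\sqrt{a_n}\,\xi_{nxx}$, and the same final approximation of ${\rm sgn}(w-\tilde w)$ by smooth compactly supported functions. The only cosmetic difference is that you quantify the approximation of $a$ by $a_n$ in the weighted $L^2$ measure $(w-\tilde w)^2\,{\rm d}x\,{\rm d}t$, whereas the paper imposes $\tfrac{a_n-a}{\sqrt{a_n}}\to0$ almost everywhere; both serve the identical purpose.
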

\begin{proof}We know that there exists $\hat\xi_m\in C^\infty_0(\mathbb{R}^+\times[0,T])$ such that $\hat\xi_m\rightarrow\hat\xi$ in $W^{1,2}_2(S_T)$ as $m\rightarrow\infty$. Now we can rewrite (\ref{au111}) as
\begin{align*}
\int_{\mathbb{R}^+}w_0(x)\hat\xi_m(x,0){\rm d}x+\iint_{S_T}w\hat\xi_{mt}{\rm d}x{\rm d}t=\iint_{S_T}\mathcal{D}(w)\hat\xi_{mxx}{\rm d}x{\rm d}t,
\end{align*}
with $\hat\xi_m\in C^\infty_0(\mathbb{R}\times[0,T))$, then letting $m\rightarrow\infty$, we deduce that for all $\hat\xi\in W^{1,2}_2(S_T)$ with $\hat\xi(\cdot,T)=0$ and $\hat\xi(0,\cdot)=0$, the difference $w-\tilde w$ satisfies
\begin{align}
0=\iint_{S_T}(w-\tilde w)(\hat\xi_t+a\hat\xi_{xx}){\rm d}x{\rm d}t+\int_{\mathbb{R}^+}(w_0-\tilde w_0)\hat\xi(x,0){\rm d}x,\label{tu123}
\end{align}
where $a:=\left\{\begin{aligned}
&\frac{\mathcal{D}(w)-\mathcal{D}(\tilde w)}{w-\tilde w}\quad &&w\ne\tilde w,\\
&0\quad &&{\rm otherwise}.\end{aligned}\right.$

Observe that  $a\in L^\infty(S_T)$, now consider a sequence $\{a_n\}$ of smooth function such that $\frac{1}{n}\le a_n\le\Vert a\Vert_{L^\infty(S_T)}+\frac{1}{n}$ and $\displaystyle\frac{a_n-a}{\sqrt a_n}\rightarrow 0$ almost everywhere in $S_T$ as $n\rightarrow\infty$.

Let $\tilde\xi_n\in W^{1,2}_2(S_T)$ be the solution of problem
\begin{align}
\left\{
\begin{array}{llll}
 \lambda=\tilde\xi_{nt}+a_n\tilde\xi_{nxx},\quad &{\rm in}\ S_T,\\
  \tilde\xi_n(0,t)=0,\quad &t\in(0,T),\\
  \tilde\xi_n(x,T)=0,\quad &x\in\mathbb{R}^{+},
\label{tu12}
\end{array}
\right.
\end{align}
where $\lambda\in C^\infty_c(\mathbb{R}^+\times[0,T))$.

The existence of a solution to this problem follows from standard parabolic theory, see for example \cite[IV, Theorem 9.1]{lady}. We claim that the following estimates hold,
\begin{itemize}
\item[{\rm(i)}]$\Vert\tilde\xi_n\Vert_{L^\infty(S_T)}\le C(\Vert\lambda\Vert_{L^\infty(S_T)},T)$;
\item[{\rm(ii)}]$\displaystyle\iint_{S_T}a_n|\tilde\xi_{nxx}|^2\le C(\lambda,T)$.
\end{itemize}
The maximum principle and a comparison of $\tilde\xi_n$ with the functions $\tilde\xi_n^+,\hat\xi_n^-$ defined by
\begin{align*}
\tilde\xi_n^+=e^{\alpha(T-t)},\quad\tilde\xi_n^-=-e^{\alpha(T-t)},
\end{align*}
where $\alpha=e^{T}\Vert\lambda\Vert_{L^\infty(S_T)}$, gives (i).

To prove (ii), multiplying the equation of $\tilde\xi_n$ by $\tilde\xi_{nxx}$ and integrating over $\mathbb{R}^+\times(t,T)$, we get
\begin{align*}
\int_t^T\int_{\mathbb{R}^+}\lambda\tilde\xi_{nxx}{\rm d}x{\rm d}t=\frac{1}{2}\int_{\mathbb{R}^+}(\tilde\xi_{nx})^2(t){\rm d}x+\int_t^T\int_{\mathbb{R}^+}a_n(\tilde\xi_{nxx})^2{\rm d}x{\rm d}t.
\end{align*}
We deduce from above that
\begin{align*}
\iint_{S_T}a_n(\tilde\xi_{nxx})^2{\rm d}x{\rm d}t\le\Vert\tilde\xi_n\Vert_{L^\infty(S_T)}\Vert\lambda_{xx}\Vert_{L^1(S_T)}.
\end{align*}
Using $\tilde\xi_n$ as a test function of (\ref{tu123}), we obtain
\begin{align*}
0=\iint_{S_T}(w-\tilde w)\left[\lambda+(a-a_n)\tilde\xi_{nxx}\right]{\rm d}x{\rm d}t+\int_{\mathbb{R}^+}(w_0-\tilde w_0)\tilde\xi_n(x,0){\rm d}x.
\end{align*}
We deduce by H\"older's inequality and (ii) that
\begin{align*}
&\lim_{n\rightarrow\infty}\sup\left|\iint_{S_T}(w-\tilde w)(a-a_n)\tilde\xi_{nxx}\right|\\ \le&\lim_{n\rightarrow\infty}\sup\left\Vert\iint_{S_T}(w-\tilde w)\frac{a-a_n}{\sqrt a_n})\right\Vert_{L^2(S_T)}\left\Vert\sqrt a_n\tilde\xi_{nxx}\right\Vert_{L^2(S_T)}=0.
\end{align*}
Thus, in the limit $n\rightarrow\infty$, we get
\begin{align*}
\iint_{S_T}(w-\tilde w)\lambda{\rm d}x{\rm d}t\le C(\Vert\lambda\Vert_{L^\infty(S_T)},T)\int_{\mathbb{R}^+}(w_0-\tilde w_0).
\end{align*}
Taking a sequence $\{\lambda_i\}_{i\in\mathbb{N}}$, $\lambda_i\in C_c^\infty(S_T)$ with $\Vert\lambda_i\Vert_{L^\infty(S_T)}\le2$ and $\lambda_i\rightarrow {\rm sgn}(w-\tilde w)$ almost everywhere, we obtain by letting $i\rightarrow\infty$
\begin{align*}
\iint_{S_T}|w-\tilde w|{\rm d}x{\rm d}t\le C(T)\int_{\mathbb{R}^+}|w_0-\tilde w_0|{\rm d}x.
\end{align*}
\end{proof}
By Theorem \ref{tu} and Theorem \ref{tu211}, we obtain the following.
\begin{theorem}
Let $\varepsilon\ge 0$. Then there exists a unique solution $w$ of the limit problem (\ref{c}).\label{tu3}
\end{theorem}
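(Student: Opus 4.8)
The plan is to derive Theorem \ref{tu3} by combining the existence already contained in Theorem \ref{tu} with the $L^1$-contraction estimate of Theorem \ref{tu211}; the statement is essentially a corollary of these two results, so the substantive work lies upstream. For existence, I would take the function $w=u-v$ of (\ref{d}), where $(u,v)$ is the subsequential limit furnished by Lemma \ref{lc}, and verify the three conditions of Definition \ref{au1}. The bound $w\in L^\infty(S_T)$ is immediate from Corollary \ref{lbb}, and the membership $\mathcal{D}(w)\in\mathcal{D}(\hat w)+L^2(0,T;W^{1,2}_0(\mathbb{R}^+))$ follows from the gradient and boundary-value information assembled in Lemma \ref{lwc}, once the flux has been identified.

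The one genuinely non-formal point in the existence half is this flux identification. Lemma \ref{lw2} provides the limiting weak identity (\ref{hll}) with spatial flux $\phi(u)-\varepsilon\phi(v)$, and I would match it to $\mathcal{D}(w)$ using the segregation $uv=0$ of Lemma \ref{lkl}, i.e. $u=w^+$ and $v=-w^-$. Where $w\ge0$ one has $v=0$, so $\phi(u)-\varepsilon\phi(v)=\phi(w)=\mathcal{D}(w)$; where $w<0$ one has $u=0$, so $\phi(u)-\varepsilon\phi(v)=-\varepsilon\phi(-w)=\mathcal{D}(w)$; in both cases definition (\ref{c0}) returns exactly $\mathcal{D}(w)$. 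With this identity, (\ref{hll}) is precisely the weak form (\ref{au}), and the initial datum $w_0=u_0^\infty-v_0^\infty=-V_0$ for $x>0$ is read off directly, so $w$ is a weak solution of (\ref{c}).

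For uniqueness I would simply invoke Theorem \ref{tu211}: two weak solutions $w,\tilde w$ with the same initial function $w_0$ satisfy $\iint_{S_T}|w-\tilde w|\,{\rm d}x\,{\rm d}t\le C(T)\int_{\mathbb{R}^+}|w_0-w_0|\,{\rm d}x=0$, whence $w=\tilde w$ a.e. in $S_T$. Existence and uniqueness together give Theorem \ref{tu3}, and the uniqueness additionally upgrades the subsequential convergence of Lemma \ref{lc} to convergence of the whole sequence $(u^k,v^k)\to(w^+,-w^-)$, completing also the outstanding claim in Theorem \ref{tu}.

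I expect no real obstacle at the level of Theorem \ref{tu3} itself; the difficulty is entirely inherited from Theorem \ref{tu211}. There the coefficient $a=(\mathcal{D}(w)-\mathcal{D}(\tilde w))/(w-\tilde w)$ may degenerate to zero wherever $\mathcal{D}$ is flat, which for $\varepsilon=0$ is the whole negative half-line since $\mathcal{D}\equiv0$ there, and even for $\varepsilon>0$ occurs at $w=0$ because $\phi'(0)=0$. The crux is therefore the duality construction in Theorem \ref{tu211}: regularising $a$ by $a_n$ with $1/n\le a_n$ and $(a_n-a)/\sqrt{a_n}\to0$ a.e., solving the adjoint problem (\ref{tu12}), and absorbing the error through the bound $\iint_{S_T}a_n|\tilde\xi_{nxx}|^2\le C$. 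That regularisation is where the argument stands or falls.
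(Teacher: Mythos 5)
Your proposal is correct and follows exactly the paper's route: the paper states Theorem \ref{tu3} as an immediate consequence of Theorem \ref{tu} (existence of the weak solution $w=u-v$, via Lemma \ref{lw2} and the segregation/flux identification you spell out) and Theorem \ref{tu211} (the $L^1$-contraction giving uniqueness). Your additional remarks on where the real difficulty lies — the degenerate coefficient $a$ and the duality argument in Theorem \ref{tu211} — accurately locate the substantive work upstream.
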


\section{The whole-line case: Problem (\ref{c})}
In this chapter, we consider the problem (\ref{c}) on the whole real-line by using similar arguments to those used in Section 2 in the half-line case. We omit most of the details and concentrate on differences between half-line case and whole line case.
\subsection{Existence and uniqueness of weak solutions for $\varepsilon>0$}
Let $\varepsilon>0$. Similarly to the half-line case, we use an approximate problem to establish existence of solutions of (\ref{a2}). For each $R>1$, let (\ref{b2}) denote the problem
\begin{align}
\left\{
\begin{array}{llll}
  u_{t}=\phi(u)_{xx}-kuv,\quad &(x,t)\in(-R,R)\times(0,T),\\
  v_{t}=\varepsilon\phi(v)_{xx}-kuv,\quad &(x,t)\in(-R,R)\times(0,T),\\
  \phi(u)_x(-R,t)=\phi(u)_x(R,t)=0,\quad &\mbox{for}\quad t\in(0,T),\\
  \phi(v)_x(-R,t)=\phi(v)_x(R,t)=0,\quad &\mbox{for}\quad t\in(0,T),\\
  u(x,0)=u_{0,R}^k(x),\quad v(x,0)=v_{0,R}^k(x),\quad &\mbox{for}\quad x\in(-R,R),
\label{b2}
\end{array}
\right.
\end{align}
where $u^k_{0,R},v^k_{0,R}\in C^2(\mathbb{R}^+)$ are such that $0\le u^k_{0,R}\le U_0$, $0\le v^k_{0,R}\le V_0$ and
\begin{align}
&u^k_{0,R}=\left\{\begin{aligned}
&U_0-(U_0-u_0^k)\hat\psi^R\ &x<0,\\
  &u_0^k\hat\psi^R\ &x\ge0,\end{aligned}\right.
  \quad v^k_{0,R}=\left\{\begin{aligned}
&v_0^k\hat\psi^R\ &x<0,\\
  &V_0-(V_0-u_0^k)\hat\psi^R\ &x\ge0,\end{aligned}\right.
  \label{ic2}
\end{align}
which define the functions $u^k_{0,R},v^k_{0,R}$ on the whole real line, where the family of cut-off functions $\hat\psi^R\in C^\infty(\mathbb{R}^+)$ with $R>1$ are defined as $\hat\psi^{R}=1$ when $|x|\le R$ and $\hat\psi^{R}=\hat\psi^{1}(x+1-R)$ when $|x|\ge R$
where $\hat\psi^1\in C^{\infty}\left(\mathbb{R}\right)$ is a even, non-negative cut-off function such that $0\le\hat\psi^1(x)\le1$ for all $x\in\mathbb{R}$, $\hat\psi^1(x)=1$ when $|x|\le1$ and $\hat\psi^1(x)=0$ when $|x|\ge2$.

Define the family of test functions
\begin{align}
\mathcal{\hat F}_T:=\left\{\xi\in C^1(Q_T):\ \xi(\cdot,T)=0\ {\rm for}\ t\in(0,T)\ {\rm and}\ {\rm supp}\,\xi\subset[-J,J]\times[0,T]\non \right.\\ \left.{\rm for}\ {\rm some}\ J>0\right\}.\label{F}
\end{align}
Arguments analogous to those used in Section 2 yield existence of solutions of Problem (\ref{a2}) by passing to the limit $R\rightarrow\infty$ in problem (\ref{b2}). We leave the details to reader and simply state the result.
\begin{theorem}
Suppose $\varepsilon>0$. Then for given $k>0$, there exists a weak solution $(u^k,v^k)\in (L^\infty(Q_T))^2$ of (\ref{a2}) such that for each $J>0$
\begin{itemize}
\item[{\rm (i)}]$\phi(u^k)\in L^2(0,T;W^{1,2}((-J,J))),\quad \phi(v^k)\in L^2(0,T;W^{1,2}(-J,J))$;
\item[{\rm (ii)}]$(u^k,v^k)$ satisfies
\begin{align*}
&\int_{\mathbb{R}}u^k_{0}\Psi(x,0){\rm d}x+\iint_{Q_T}u^k\Psi_t{\rm d}x{\rm d}t=\iint_{Q_T}\phi(u^k)_x\Psi_x{\rm d}x{\rm d}t+k\iint_{Q_T}\Psi u^kv^k{\rm d}x{\rm d}t,\\
&\int_{\mathbb{R}}v^k_{0}\Psi(x,0){\rm d}x+\iint_{Q_T}v^k\Psi_t{\rm d}x{\rm d}t=\iint_{Q_T}\varepsilon\phi(v^k)_x\Psi_x{\rm d}x{\rm d}t+k\iint_{Q_T}\Psi u^kv^k{\rm d}x{\rm d}t,
\end{align*}
\end{itemize}
where $\Psi\in\mathcal{\hat F}_T$.
\end{theorem}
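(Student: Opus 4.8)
The plan is to mirror the strategy of Section 2, replacing the half-strip $S_T$ and the half-line cut-offs by their two-sided, symmetric counterparts on $Q_T$. First I would establish existence and uniqueness of a weak solution $(u_R,v_R)$ of the bounded-domain problem (\ref{b2}) on $(-R,R)\times(0,T)$, exactly as in Theorem \ref{er}. That is, I would set up the monotone iteration $\{u_R^{(m)}\},\{v_R^{(m)}\}$ in which each $u_R^{(m)}$ solves the problem with frozen datum $v_R^{(m-1)}$ and each $v_R^{(m)}$ the corresponding problem with $u_R^{(m)}$ frozen, starting from $p=V_0$. Each iterate exists and is unique by the non-degenerate regularisation argument of Lemma \ref{lww} (approximating $\phi$ by smooth $\phi_n$ with $\phi_n'\ge 1/n$ and identifying the weak limit via Lemma \ref{lchi}); the sequences are monotone and trapped in $[0,U_0]$ and $[0,V_0]$ by comparison; and the $m\to\infty$ limit furnishes $(u_R,v_R)$. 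The one structural change is that here \emph{both} unknowns carry homogeneous Neumann conditions at $x=\pm R$, so there is no Dirichlet datum to absorb: the function spaces are simply $\phi(u_R),\phi(v_R)\in L^2(0,T;W^{1,2}(-R,R))$ with no affine shift by $\phi(\hat u)$, and the sub-problem for $u$ is the pure-Neumann analogue of (\ref{i}).

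Next I would derive the $R$-uniform a priori estimates that drive the limit $R\to\infty$ for fixed $k$. The $L^\infty$ bounds $0\le u_R\le U_0$, $0\le v_R\le V_0$ are immediate from comparison. For the localised reaction bound I would test the $u$-equation against a symmetric cut-off $\varphi^L$ supported in $[-(L+1),L+1]$, the even analogue of the cut-off in Lemma \ref{lkrb}, obtaining $k\iint u_Rv_R\varphi^L\le C(L)$ independently of $R$. Testing against $\phi(u_R)\psi^L$, and likewise for $v$, as in Lemma \ref{lrb}, then bounds $\phi(u_R),\phi(v_R)$ in $L^2(0,T;W^{1,2}(-L,L))$ uniformly in $R$; the absence of a Dirichlet value simply means one multiplies by $\phi(u_R)\psi^L$ rather than $[\phi(u_R)-\phi(U_0)]\psi^L$. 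The space- and time-translate estimates of Lemmas \ref{lcl} and \ref{ltcl} transfer, the former from the gradient bound and the latter via the Mean Value Theorem applied to $\phi$, splitting the integral exactly as in the analogues $I_1,I_2,I_3$.

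Finally I would pass to the limit as in Theorem \ref{tq}. Given $R_n\to\infty$, the $L^\infty$ bounds together with the translate estimates and the Riesz--Fr\'echet--Kolmogorov theorem (\cite[Corollary 4.27]{sobs}) give, along a subsequence, $u_{R_{n_j}}\to u^k$ and $v_{R_{n_j}}\to v^k$ strongly in $L^2_{loc}(Q_T)$ and a.e., while the local gradient bounds give $\phi(u_{R_{n_j}})\rightharpoonup\phi(u^k)$ and $\phi(v_{R_{n_j}})\rightharpoonup\phi(v^k)$ weakly in $L^2(0,T;W^{1,2}(-J,J))$ for each $J$. Since every $\Psi\in\mathcal{\hat F}_T$ has compact spatial support, the Dominated Convergence Theorem permits passage to the limit in the weak form of (\ref{b2}) — using that $u^k_{0,R}\to u_0^k$ and $v^k_{0,R}\to v_0^k$ — yielding both the regularity in (i) and the identities in (ii). The one genuinely new point, rather than a transcription, is the two-sided initial-data construction (\ref{ic2}) together with the symmetric cut-offs; I would expect the careful localisation of the $L^1$ bound on $ku_Rv_R$ across the transition region near $x=0$, where $u$ and $v$ overlap, to be the part most requiring attention, precisely the difficulty flagged for the sharper $k$-uniform bound of Lemma \ref{lk23}.
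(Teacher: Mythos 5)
Your proposal is correct and follows essentially the same route as the paper, which itself states only that existence follows "by arguments analogous to those used in Section 2", i.e.\ the bounded-domain approximation (\ref{b2}) with monotone iteration, $R$-uniform local estimates and translate bounds, and passage to the limit $R\to\infty$ via the Riesz--Fr\'echet--Kolmogorov theorem. Your observations about the purely Neumann boundary conditions (no affine shift by $\phi(\hat u)$) and the two-sided cut-offs are exactly the adaptations the paper intends.
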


The following comparison principle which follows from arguments analogous to those used in the proof of Lemma \ref{lu}, replacing $\psi^L$ by $\hat\psi^L$ and integrals over $S_T$ by integrals over $Q_T$, to prove the uniqueness of the weak solution of Problem ({\ref{a2}}), for both $\varepsilon>0$ and $\varepsilon=0$.
\begin{lemma}\label{lu2}
Suppose $\varepsilon\ge0$ and let $(\overline{u}^k,\overline{v}^k)$, $(\underline{u}^k,\underline{v}^k)$ be such that
\begin{itemize}\item[{\rm (a)}] $\overline{u}^k,\underline{u}^k\in L^{\infty}(Q_T);$
\item[{\rm (b)}]$\phi(\overline{u}^k),\phi(\underline{u}^k)\in L^2(0,T;W^{1,2}(\mathbb{R}))$,\
 $\overline{u}^k_{t},\underline{u}^k_{t},\phi(\overline{u}^k)_{xx},\phi(\underline{u}^k)_{xx}\in L^1(Q_T);$
\item[{\rm (c)}]$\overline{v}^k,\underline{v}^k\in L^{\infty}(Q_T)$, $\overline{v}^k_{t},\underline{v}^k_{t}\in L^1(Q_T);$
\item[{\rm (d)}]If $\varepsilon>0$, $\phi(\overline{v}^k),\phi(\underline{v}^k)\in L^2(0,T;W^{1,2}(\mathbb{R}))$,\
 $\phi(\overline{v}^k)_{xx},\phi(\underline{v}^k)_{xx}\in L^1(Q_T);$
 \end{itemize}
$(\overline{u}^k,\overline{v}^k)$, $(\underline{u}^k,\underline{v}^k)$ satisfy
\begin{align*}
&\overline{u}^k_{t}\ge\phi(\overline{u}^k)_{xx}-k\overline{u}^k\overline{v}^k,\quad\underline{u}^k_{t}\le\phi(\underline{u}^k)_{xx}-k\underline{u}^k\underline{v}^k,&&{\rm in}\ Q_T,\\
&\overline{v}^k_{t}\le\varepsilon\phi(\overline{v}^k)_{xx}-k\overline{u}^k\overline{v}^k,\quad\underline{v}^k_{t}\ge\varepsilon\phi(\underline{v}^k)_{xx}-k\underline{u}^k\underline{v}^k,\quad &&{\rm in}\ Q_T,\\
&\overline{u}^k(\cdot,0)\ge\underline{u}^k(\cdot,0),\quad \overline{v}^k(\cdot,0)\le\underline{v}^k(\cdot,0),\quad&&\mbox{on} \ \mathbb{R}.
\end{align*}
Then
\begin{align*}
\overline{u}^k\ge\underline{u}^k,\quad\overline{v}^k\le\underline{v}^k\quad\mbox{in} \  Q_T.
\end{align*}
\end{lemma}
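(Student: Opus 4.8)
The plan is to mirror the proof of Lemma~\ref{lu}, the only structural changes being that the spatial domain $\mathbb{R}$ carries no interior boundary (so the conditions at $x=0$ present in the half-line case are simply absent) and that the cut-off family must be symmetric in $x$. First I would fix the smooth non-decreasing convex approximation $m^+$ of the positive part and its rescalings $m^+_\alpha(r)=\alpha m^+(r/\alpha)$ exactly as in the proof of Lemma~\ref{lur}, and set $w:=\phi(\underline{u}^k)-\phi(\overline{u}^k)$ and $z:=\phi(\overline{v}^k)-\phi(\underline{v}^k)$. I would also introduce an even cut-off family $\hat\psi^L\in C^\infty(\mathbb{R})$ playing the role of $\psi^L$ from Lemma~\ref{lrb}, with $\hat\psi^L\equiv1$ on $[-L,L]$, $\hat\psi^L\equiv0$ outside $[-L-1,L+1]$, and $\hat\psi^L_x,\hat\psi^L_{xx}$ bounded in $L^\infty(\mathbb{R})$ independently of $L$; crucially $\hat\psi^L_{xx}$ is supported on $\{L\le|x|\le L+1\}$.

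The core computation is to subtract the two differential inequalities for $u$, multiply by $(m^+_\alpha)'(w)\hat\psi^L$, do likewise for the $v$-inequalities with $(m^+_\alpha)'(z)\hat\psi^L$, integrate over $\mathbb{R}\times(0,t_0)$ and add. After integrating the diffusion terms by parts, the quadratic contribution $-\int(m^+_\alpha)''(w)w_x^2\hat\psi^L\le0$ has the favourable sign and is discarded, while differentiating the cut-off leaves a remainder that, in the limit, is controlled by $\hat\psi^L_{xx}$. Letting $\alpha\to0$ I would use $(m^+_\alpha)'(w)\to\mathrm{sgn}^+(w)$ together with the monotonicity identity $\mathrm{sgn}^+(\phi(\underline{u}^k)-\phi(\overline{u}^k))=\mathrm{sgn}^+(\underline{u}^k-\overline{u}^k)$ (and the analogue for $z$), and apply \cite[Lemma 7.6]{epde} to rewrite the time-derivative terms as the difference of the integrals of the positive parts at $t_0$ and at $0$. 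As in Lemma~\ref{lur}, the reaction contribution carries the favourable sign $[\mathrm{sgn}^+(w)-\mathrm{sgn}^+(z)](\underline{u}^k\underline{v}^k-\overline{u}^k\overline{v}^k)\ge0$ and is therefore dropped. This yields
\begin{align*}
\int_{\mathbb{R}}\hat\psi^L\big[(\underline{u}^k-\overline{u}^k)^++(\overline{v}^k-\underline{v}^k)^+\big](x,t_0)\,{\rm d}x\le\int_0^{t_0}\int_{\mathbb{R}}\hat\psi^L_{xx}\big\{w^++\varepsilon z^+\big\}\,{\rm d}x{\rm d}t,
\end{align*}
the contribution of the initial data vanishing because $\overline{u}^k(\cdot,0)\ge\underline{u}^k(\cdot,0)$ and $\overline{v}^k(\cdot,0)\le\underline{v}^k(\cdot,0)$.

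It remains to send $L\to\infty$. Since $\hat\psi^L_{xx}$ is uniformly bounded and supported on a set of measure $2$, while $w^+,z^+\in L^\infty(Q_T)$, the right-hand side is bounded independently of $L$ and $t_0$; monotone convergence then gives $(\underline{u}^k-\overline{u}^k)^++(\overline{v}^k-\underline{v}^k)^+\in L^\infty(0,T;L^1(\mathbb{R}))$, and by the Mean Value Theorem applied to $\phi$ on the bounded range of the solutions the same holds for $w^++\varepsilon z^+$. Consequently the integral over the receding shell $\{L\le|x|\le L+1\}$ tends to $0$, so the left-hand positive part vanishes for almost every $t_0$, giving $\overline{u}^k\ge\underline{u}^k$ and $\overline{v}^k\le\underline{v}^k$ in $Q_T$. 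I expect the main obstacle to be exactly this far-field step: unlike the half-line case there are now two receding edges at $x=\pm L$, and one must verify that $w^++\varepsilon z^+\in L^\infty(0,T;L^1(\mathbb{R}))$ so that the $\hat\psi^L_{xx}$ remainder genuinely vanishes. The absence of a boundary at $x=0$ in fact simplifies the argument, since no boundary terms arise there.
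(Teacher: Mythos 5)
Your proposal is correct and follows essentially the same route as the paper, which itself only remarks that Lemma \ref{lu2} follows from the proof of Lemma \ref{lu} upon replacing $\psi^L$ by the even cut-off $\hat\psi^L$ and integrals over $S_T$ by integrals over $Q_T$; your unpacking of the $\alpha\to0$ limit via \cite[Lemma 7.6]{epde}, the sign of the reaction term, and the $L\to\infty$ step using $L^\infty(0,T;L^1(\mathbb{R}))$ integrability to kill the $\hat\psi^L_{xx}$ remainder matches the half-line argument exactly. The two-sided receding shell you flag is handled identically to the one-sided case, so no new difficulty arises.
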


The following two corollaries follow immediately from Lemma \ref{lu2}.
\begin{corollary}
Suppose $\varepsilon\ge0$ and $k>0$. Then for given initial data $u_0^k,v_0^k$, there is at most one solution $(u^k,v^k)$ of (\ref{a2}).
\end{corollary}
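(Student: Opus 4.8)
The plan is to obtain uniqueness by applying the comparison principle of Lemma~\ref{lu2} twice, with the roles of the two candidate solutions interchanged. Suppose $(u^k,v^k)$ and $(\tilde u^k,\tilde v^k)$ are both weak solutions of (\ref{a2}) with the same initial data $u_0^k,v_0^k$. The key observation is that a genuine solution satisfies the governing equations with equality, so it simultaneously qualifies as a supersolution and as a subsolution in each slot of Lemma~\ref{lu2}; this is precisely what lets the comparison principle be invoked in both directions.

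Concretely, I would first set $(\overline{u}^k,\overline{v}^k)=(u^k,v^k)$ and $(\underline{u}^k,\underline{v}^k)=(\tilde u^k,\tilde v^k)$. Since $u^k_t=\phi(u^k)_{xx}-ku^kv^k$ and $v^k_t=\varepsilon\phi(v^k)_{xx}-ku^kv^k$ hold with equality, the first pair satisfies the required supersolution inequalities $\overline{u}^k_t\ge\phi(\overline{u}^k)_{xx}-k\overline{u}^k\overline{v}^k$ and $\overline{v}^k_t\le\varepsilon\phi(\overline{v}^k)_{xx}-k\overline{u}^k\overline{v}^k$, while $(\tilde u^k,\tilde v^k)$ satisfies the corresponding subsolution inequalities. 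Because the initial data coincide, $\overline{u}^k(\cdot,0)=\underline{u}^k(\cdot,0)$ and $\overline{v}^k(\cdot,0)=\underline{v}^k(\cdot,0)$, so in particular $\overline{u}^k(\cdot,0)\ge\underline{u}^k(\cdot,0)$ and $\overline{v}^k(\cdot,0)\le\underline{v}^k(\cdot,0)$ on $\mathbb{R}$; Lemma~\ref{lu2} then gives $u^k\ge\tilde u^k$ and $v^k\le\tilde v^k$ in $Q_T$. Interchanging the roles, i.e. taking $(\overline{u}^k,\overline{v}^k)=(\tilde u^k,\tilde v^k)$ and $(\underline{u}^k,\underline{v}^k)=(u^k,v^k)$, the same lemma yields the reverse inequalities $\tilde u^k\ge u^k$ and $\tilde v^k\le v^k$. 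Combining the two conclusions forces $u^k=\tilde u^k$ and $v^k=\tilde v^k$ almost everywhere in $Q_T$, which is the claimed uniqueness.

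The only point requiring genuine care — and the step I expect to be the main obstacle — is verifying that an arbitrary weak solution of (\ref{a2}) actually satisfies the regularity hypotheses (a)--(d) of Lemma~\ref{lu2}, so that it may legitimately occupy both the super- and the subsolution slots. One must check that $\phi(u^k),\phi(v^k)\in L^2(0,T;W^{1,2}(\mathbb{R}))$, that $u^k_t,v^k_t$ and $\phi(u^k)_{xx},\phi(v^k)_{xx}$ lie in $L^1(Q_T)$, and (when $\varepsilon>0$) the analogous bounds for the $v$-component. These are supplied by the whole-line counterparts of the \emph{a priori} gradient estimate in Lemma~\ref{ldnb} together with the continuity of weak solutions obtained via the DiBenedetto regularity argument used in Theorem~\ref{er}. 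Once this regularity is established, the double application of the comparison principle is immediate, and the corollary follows.
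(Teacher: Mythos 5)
Your argument is exactly the paper's: the corollary is stated there as an immediate consequence of the comparison principle in Lemma~\ref{lu2}, obtained by applying that lemma twice with the two candidate solutions swapped between the super- and subsolution slots. Your additional remark about verifying the regularity hypotheses (a)--(d) is a reasonable point of care, but it does not change the route, which coincides with the paper's.
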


\begin{corollary}\label{ca12}
Let $(u^k,v^k)$ be a weak solution of (\ref{a2}). Then we have
\begin{align}
0\le u^k(x,t)\le U_0\quad {\rm and}\quad 0\le v^k(x,t)\le V_0\quad {\rm for}\ (x,t)\in Q_T.
\label{a12}
\end{align}

\end{corollary}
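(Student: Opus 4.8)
The plan is to apply the comparison principle in Lemma \ref{lu2} twice, exactly as in the half-line Corollary \ref{b1}, taking the constant functions $0$, $U_0$ and $V_0$ as sub- and supersolutions. The key observation is that any constant $c$ satisfies $c_t=0$ and $\phi(c)_{xx}=0$, and that the reaction term $k\,(\mathrm{const})\cdot(\mathrm{const})$ vanishes whenever one of the two factors is the constant $0$. Hence constant pairs of the relevant form satisfy all the required differential (in)equalities with equality and trivially meet the regularity hypotheses (a)--(d), so Lemma \ref{lu2} applies to them.

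First I would obtain the lower bound $u^k\ge0$ and upper bound $v^k\le V_0$ by applying Lemma \ref{lu2} with $(\overline{u}^k,\overline{v}^k)=(u^k,v^k)$ and $(\underline{u}^k,\underline{v}^k)=(0,V_0)$. The actual solution satisfies the $u$- and $v$-equations with equality, so it serves as the required supersolution, while for the pair $(0,V_0)$ both diffusion terms vanish and the reaction term $k\,\underline{u}^k\,\underline{v}^k=k\cdot0\cdot V_0$ is zero, so the subsolution inequalities hold as equalities. The initial orderings $\overline{u}^k(\cdot,0)=u_0^k\ge0$ and $\overline{v}^k(\cdot,0)=v_0^k\le V_0$ follow from assumption (ii) on the initial data. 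The conclusion $\overline{u}^k\ge\underline{u}^k$, $\overline{v}^k\le\underline{v}^k$ then reads $u^k\ge0$ and $v^k\le V_0$.

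Second, to obtain $u^k\le U_0$ and $v^k\ge0$, I would reverse the roles, taking $(\overline{u}^k,\overline{v}^k)=(U_0,0)$ and $(\underline{u}^k,\underline{v}^k)=(u^k,v^k)$. Again the constant pair $(U_0,0)$ satisfies the supersolution inequalities with equality, since the reaction term $k\,\overline{u}^k\,\overline{v}^k=k\cdot U_0\cdot0$ vanishes, while $(u^k,v^k)$ is the required subsolution. Assumption (ii) supplies the initial orderings $U_0\ge u_0^k$ and $0\le v_0^k$, and Lemma \ref{lu2} yields $U_0\ge u^k$ and $0\le v^k$. Combining the two applications gives (\ref{a12}).

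Since the result is a direct corollary of an already-established comparison principle, there is no genuine analytic obstacle; the proof is purely a bookkeeping exercise. The only point requiring care is the sign convention built into Lemma \ref{lu2}, where the ordering for the $v$-component is reversed relative to that for $u$ (the supersolution for $v$ lies \emph{below} the subsolution), reflecting the depletion coupling through the reaction term. One must therefore pair a bound on one component with the correctly oriented bound on the other, as done in the two applications above.
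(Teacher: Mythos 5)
Your proposal is correct and follows essentially the same route as the paper, which states that Corollary \ref{ca12} is immediate from the comparison principle of Lemma \ref{lu2} with constant comparison functions (the whole-line analogue of how Corollary \ref{b1} is deduced from Lemma \ref{lur}). In fact your pairings $(\underline{u}^k,\underline{v}^k)=(0,V_0)$ and $(\overline{u}^k,\overline{v}^k)=(U_0,0)$ are the ones that actually satisfy the reversed initial ordering required for the $v$-component, so your bookkeeping is, if anything, more careful than the paper's own sketch in the half-line case.
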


\subsection{\textit{A priori} bounds and existence of weak solutions for $\varepsilon=0$}
Similar to the half-line case, the following a priori bounds will be used to study the $\varepsilon\rightarrow 0$ and $k\rightarrow\infty$ limits.

The next result follows from a similar argument to that of \cite[Lemma 2.12]{selfsim} and is the whole-line analogue of Lemma \ref{lk}. Since this result is crucial in the following and the whole-line requires the term $kuv$ to be controlled by the equation of $u^k$ on $\mathbb{R}^+$ and the equation of $v^k$ on $\mathbb{R}^-$, we give a short proof here for the convenience of the reader. 
\begin{lemma}\label{lk23}
There exists a constant $C>0$, independent of $\varepsilon\ge0$ and $k>0$, such that for any solution $(u^k,v^k)$ of (\ref{a2}), we have
\begin{align*}
\iint_{Q_T}ku^k v^k{\rm d}x{\rm d}t\le C.
\end{align*}
\end{lemma}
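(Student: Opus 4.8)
The plan is to avoid any pointwise analysis at the interface $x=0$ by working with a smooth partition of unity adapted to the two reactants. I would fix smooth functions $\zeta_1,\zeta_2$ with $\zeta_1+\zeta_2\equiv 1$ on $\mathbb{R}$, $0\le\zeta_i\le1$, $\zeta_1(x)=0$ for $x\le-1$ and $\zeta_1(x)=1$ for $x\ge1$, so that $\zeta_2=1-\zeta_1$ equals $1$ on $(-\infty,-1]$ and $0$ on $[1,\infty)$. Thus $\zeta_1$ concentrates weight on $\mathbb{R}^+$, where I exploit the equation for $u^k$, and $\zeta_2$ on $\mathbb{R}^-$, where I exploit the equation for $v^k$, exactly matching the heuristic description. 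The decisive point is that on testing the $u^k$-equation against $\zeta_1$ and the $v^k$-equation against $\zeta_2$ and adding, the two reaction weights recombine into $\zeta_1+\zeta_2=1$, so the two copies of $-ku^kv^k$ reassemble into the full reaction integral I want to bound.

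Rigorously this must be run through the weak formulation, since $\zeta_1,\zeta_2$ are not compactly supported. Multiplying $\zeta_1,\zeta_2$ by a spatial cut-off $\hat\psi^L$ (equal to $1$ on $[-L,L]$) and a temporal factor $\eta$ with $\eta(0)=1$, $\eta(T)=0$ produces admissible test functions in $\hat{\mathcal F}_T$. Inserting these into the two weak identities of (\ref{a2}), adding, and then letting $\hat\psi^L$ exhaust $\mathbb{R}$ and $\eta\uparrow 1$ on $[0,T)$, I obtain, using $(\zeta_2)_{xx}=-(\zeta_1)_{xx}$,
\begin{align*}
k\iint_{Q_T}u^kv^k\,{\rm d}x{\rm d}t=\iint_{Q_T}(\zeta_1)_{xx}\big(\phi(u^k)-\varepsilon\phi(v^k)\big)\,{\rm d}x{\rm d}t+\int_{\mathbb{R}}\big(u_0^k\zeta_1+v_0^k\zeta_2\big)\,{\rm d}x-\int_{\mathbb{R}}\big(u^k\zeta_1+v^k\zeta_2\big)(\cdot,T)\,{\rm d}x.
\end{align*}
The two limit passages are justified by the uniform $L^\infty$ bounds of Corollary \ref{ca12} (so all integrands are dominated), the boundedness of $\phi(u^k),\phi(v^k)$, and the continuity of the solutions.

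It then remains to bound the three right-hand terms uniformly in $k$. The final-time term is nonnegative, since $u^k,v^k\ge0$ and $\zeta_1,\zeta_2\ge0$, so it may simply be discarded to produce an upper bound. The initial term reduces, away from the bounded transition region $[-1,1]$, to $\int_{\mathbb{R}^+}u_0^k+\int_{\mathbb{R}^-}v_0^k$, and assumption (iii) on the whole line gives $\Vert u_0^k\Vert_{L^1(\mathbb{R}^+)}\le\Vert u_0^k-u_0^\infty\Vert_{L^1(\mathbb{R})}\to0$ and likewise for $v_0^k$, so this is bounded independently of $k$; the contribution from $[-1,1]$ is at most $(U_0+V_0)$ times the interval length. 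Finally, the transition term is harmless because $(\zeta_1)_{xx}$ is a fixed bounded function supported in $[-1,1]$, while $0\le\phi(u^k)\le\phi(U_0)$ and $0\le\phi(v^k)\le\phi(V_0)$, so it is dominated by $T\Vert(\zeta_1)_{xx}\Vert_{L^1}\big(\phi(U_0)+\varepsilon\phi(V_0)\big)$, the $\varepsilon$-dependence entering only through the bounded factor $\phi(V_0)$.

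The main obstacle is precisely this interface contribution. If instead one splits $Q_T$ sharply at $x=0$ and integrates each equation over a half-line, one is left with the boundary flux $\int_0^T\big(\varepsilon\phi(v^k)_x-\phi(u^k)_x\big)(0,t)\,{\rm d}t$; attempting to evaluate it through the conserved quantity $w^k=u^k-v^k$, whose reaction terms cancel, leads back to an $L^1$ bound of the very type one is trying to establish, and hence to circularity. The smooth partition of unity resolves this by replacing the pointwise flux at $x=0$ with the manifestly bounded smeared integral $\iint(\zeta_1)_{xx}\big(\phi(u^k)-\varepsilon\phi(v^k)\big)$. The only remaining technical care lies in the two limit passages ($L\to\infty$ and $\eta\uparrow 1$), which rest on the a priori $L^\infty$ bounds of Corollary \ref{ca12} and the $L^1$-integrability of the initial data at the two ends.
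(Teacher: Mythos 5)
Your argument is correct and is essentially the paper's own proof: the paper likewise tests the $u^k$-equation against a cut-off $\beta^L$ concentrated on $\mathbb{R}^+$ and the $v^k$-equation against $\tilde\beta^L(x)=\beta^L(-x)$ concentrated on $\mathbb{R}^-$, bounds the interface contribution by pairing $\beta^L_{xx}$ (uniformly bounded in $L^1$) with $0\le\phi(u^k)\le\phi(U_0)$, $0\le\varepsilon\phi(v^k)\le\varepsilon\phi(V_0)$, uses the $L^1$-boundedness of $u_0^k$ on $\mathbb{R}^+$ and $v_0^k$ on $\mathbb{R}^-$ from hypothesis (iii), discards the nonnegative final-time terms, and lets $L\to\infty$ by monotone convergence. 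The only difference is cosmetic: you package the two localizations as a partition of unity so the reaction terms recombine into a single identity, whereas the paper derives the two half-line bounds (\ref{lbk1}) and (\ref{lbk2}) separately and adds them.
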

\begin{proof}Define $\beta^{1}\in C^{\infty}\left(\mathbb{R}\right)$ such that $0\le\beta^{1}(x)\le1$ for all $x\in\mathbb{R}$, $\beta^{1}(x)=1$ when $x\in[0,1]$ and $\beta^{1}(x)=0$ when $x\in(-\infty,-1]\cup[2,\infty)$.
Then given $L\ge1$, the family of cut-off functions $\beta^{L}\in C^{\infty}(\mathbb{R})$ are defined by $\beta^{L}(x)=\beta^1$ when $x<0$, $\beta^{L}(x)=1$ when $x\in[0,L]$ and $\beta^{L}(x)=\beta^{1}(x+1-L)$ when $x\ge L$. We also define $\tilde\beta^{L}\in C^{\infty}(\mathbb{R})$ by $\tilde\beta^{L}(x)=\beta^L(-x)$ for all $x\in\mathbb{R}$. Note that $0\le\beta^{L}(x),\tilde\beta^{L}(x)\le1$ for all $x\in\mathbb{R}$ and $\beta^L_x,\beta^L_{xx},\tilde\beta^L_x,\tilde\beta^L_{xx}$ are bounded in both $L^\infty(\mathbb{R})$ and $L^1(\mathbb{R})$ independently of $L$.

Multiplying the equation for $u^k$ by $\beta^L$ and integrating over $\mathbb{R}\times(0,t_0)$ where $t_0\in(0,T]$, give
\begin{align*}
\int_{\mathbb{R}}\beta^L u^k(x,t_0){\rm d}x+k\int_{0}^{t_0}\int_{\mathbb{R}}\beta^Lu^kv^k{\rm d}x{\rm d}t=\int_{0}^{t_0}\int_{\mathbb{R}}\beta^L_{xx}\phi(u^k){\rm d}x{\rm d}t+\int_{\mathbb{R}}\beta^L u^k_0(x){\rm d}x,
\end{align*}
which, by the definition of $\beta^L$, (\ref{a12}) and $u^k_0$ is bounded independently of $k$ in $L^1(\mathbb{R^+})$, imply that the right-hand side is bounded independently of $L$ and $k$, given the existence of $C>0$ such that for all $k>0$ and $t_0\in(0,T]$
\begin{align}
\int_{-1}^{L+1}\beta^L u^k(x,t_0){\rm d}x+k\int_{0}^{t_0}\int_{-1}^{L+1}\beta^Lu^kv^k{\rm d}x{\rm d}t\le C,
\label{bbu}
\end{align}
and then, letting $L\rightarrow\infty$ and using Lebesgue's monotone convergence theorem give
\begin{align}
k\int_{0}^{T}\int_0^\infty u^kv^k{\rm d}x{\rm d}t\le C.
\label{lbk1}
\end{align}

Similarly, since $\{v^k_0\}$ is bounded independently of $k$ in ${L^1(\mathbb{R^-})}$, multiplying the equation for $v^k$  by $\tilde\beta^L$ and integrating over $\mathbb{R}\times(0,t_0)$ yields that $C$ can be chosen large enough that for all $L$ and $k>0$, we have
\begin{align}
\int^1_{-\infty}\tilde\beta^L v^k(x,t_0){\rm d}x+k\int_{0}^{t_0}\int^1_{-\infty}\tilde\beta^Lu^kv^k{\rm d}x{\rm d}t\le C,
\label{bbv}
\end{align}
and hence, letting $L\rightarrow\infty$ yields that
\begin{align}
k\int_{0}^{T}\int^0_{-\infty} u^kv^k{\rm d}x{\rm d}t\le C.
\label{lbk2}
\end{align}
The result then follows from (\ref{lbk1}) and (\ref{lbk2}).\end{proof}

The $L^1$-bounds of $\{u^k(\cdot,t)-u^\infty_0\}$ and $\{v^k(\cdot,t)-v^\infty_0\}$ follow from a similar approach to that used in the proof of \cite[Lemma 2.13]{selfsim}.
\begin{lemma}\label{lb22}
There exists a constant $C>0$ independently of $\varepsilon\ge0$ and $k>0$, such that for any solution $(u^k,v^k)$ of (\ref{a2}), we have
\begin{align}
\Vert u^k(\cdot,t)-u^\infty_0\Vert_{L^1(\mathbb{R})}\le C\ {\rm and}\ \Vert v^k(\cdot,t)-v^\infty_0\Vert_{L^1(\mathbb{R})}\le C.
\end{align}

\end{lemma}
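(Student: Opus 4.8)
The plan is to write each $L^1$-norm as a sum of two half-line contributions, one with target $0$ and one with a constant target, and to treat the two types by different means. Since $0\le u^k\le U_0$ by Corollary \ref{ca12} and $u_0^\infty=U_0$ on $\mathbb{R}^-$, $u_0^\infty=0$ on $\mathbb{R}^+$,
\begin{align*}
\Vert u^k(\cdot,t)-u^\infty_0\Vert_{L^1(\mathbb{R})}=\int_{-\infty}^0(U_0-u^k(x,t))\,{\rm d}x+\int_0^\infty u^k(x,t)\,{\rm d}x.
\end{align*}
The second integral is already controlled: letting $L\to\infty$ in (\ref{bbu}), using $\beta^L\equiv1$ on $[0,L]$ and Lebesgue's Monotone Convergence Theorem, gives $\int_0^\infty u^k(\cdot,t)\,{\rm d}x\le C$ uniformly in $k$ and $\varepsilon$. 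By the mirror symmetry $x\mapsto-x$ with the roles of $u^k$ and $v^k$ exchanged, the analogous splitting for $v^k$ reduces the whole claim to bounding $\int_{-\infty}^0(U_0-u^k)$ and $\int_0^\infty(V_0-v^k)$, the remaining direct piece $\int_{-\infty}^0 v^k$ being controlled by (\ref{bbv}).

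For $\int_{-\infty}^0(U_0-u^k)$ I would adapt the modulus argument from the proof of Lemma \ref{lkuv} to the whole-line, nonlinear-diffusion setting. Set $\hat u:=U_0-u^k\ge0$ and $\hat z:=\phi(U_0)-\phi(u^k)\ge0$, so that $\hat u_t=\hat z_{xx}+ku^kv^k$ and, since $\phi$ is strictly increasing, ${\rm sgn}(\hat z)={\rm sgn}(\hat u)$. With $m_\alpha$ as in Lemma \ref{lkuv} and $\tilde\beta^L$ the reflected cut-off from the proof of Lemma \ref{lk23} (so $\tilde\beta^L\equiv1$ on $[-L,0]$, with support in $[-L-1,1]$ and $\tilde\beta^L_{xx}$ supported in $[-L-1,-L]\cup[0,1]$), I would multiply the equation for $\hat u$ by $m_\alpha'(\hat z)\tilde\beta^L$, integrate over $\mathbb{R}\times(0,t_0)$, and let $\alpha\to0$. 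Using \cite[Lemma 7.6]{epde} for the time term, the convexity of $m_\alpha$ (whose second-order contribution has a favourable sign and is discarded), and $m_\alpha(\hat z)\to|\hat z|$, this yields
\begin{align*}
\int_{\mathbb{R}}\tilde\beta^L\hat u(x,t_0)\,{\rm d}x\le\int_{\mathbb{R}}\tilde\beta^L\hat u(x,0)\,{\rm d}x+\int_0^{t_0}\!\!\int_{\mathbb{R}}|\hat z|\,\tilde\beta^L_{xx}\,{\rm d}x{\rm d}t+k\int_0^{t_0}\!\!\int_{\mathbb{R}}{\rm sgn}(\hat z)\,u^kv^k\,\tilde\beta^L\,{\rm d}x{\rm d}t.
\end{align*}
The initial term is bounded because $U_0-u_0^k=u^\infty_0-u_0^k$ on $\mathbb{R}^-$ is $L^1$-bounded by assumption (iii), while on the bounded overlap $[0,1]$ one has $U_0-u_0^k\le U_0$; the cut-off term is bounded since $|\hat z|\le\phi(U_0)$ and $\tilde\beta^L_{xx}$ is bounded with support of fixed measure (the $[-L-1,-L]$ part moreover vanishes as $L\to\infty$, since $u^k\to U_0$ at $-\infty$); and the reaction term is bounded by $k\iint_{Q_T}u^kv^k\le C$ from Lemma \ref{lk23}. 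Letting $L\to\infty$ by Monotone Convergence gives $\int_{-\infty}^0(U_0-u^k(\cdot,t_0))\,{\rm d}x\le C$, uniformly in $k$ and $\varepsilon$.

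The bound on $\int_0^\infty(V_0-v^k)$ then follows by the reflected argument applied to the equation for $v^k$, with $\hat v:=V_0-v^k$, $\hat z:=\phi(V_0)-\phi(v^k)$ and the cut-off $\beta^L$; the only change is the factor $\varepsilon$ multiplying the diffusion contribution, which is controlled exactly as the corresponding term in Lemma \ref{lkuv} (and is absent when $\varepsilon=0$). I expect the main obstacle to be that the reaction term $ku^kv^k$ enters with an unfavourable sign---the consumption of $u^k$ tends to \emph{increase} $U_0-u^k$---so no contraction is available and this term must instead be absorbed using the $k$- and $\varepsilon$-uniform $L^1(Q_T)$ bound of Lemma \ref{lk23}; the nonlinearity of $\phi$ is accommodated by working with $\hat z=\phi({\rm const})-\phi(\cdot)$ rather than $\hat u$ directly and exploiting both the sign identity ${\rm sgn}(\hat z)={\rm sgn}(\hat u)$ and the convexity of $m_\alpha$ to dispose of the extra diffusion term.
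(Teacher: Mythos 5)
Your proof is correct and follows essentially the same route the paper intends: the paper only sketches this result by reference to \cite[Lemma 2.13]{selfsim}, but your combination of the half-line $L^1$ bounds already contained in (\ref{bbu})--(\ref{bbv}) with the modulus-function/cut-off argument of Lemma \ref{lkuv} (using $\hat z=\phi({\rm const})-\phi(\cdot)$, the sign identity, convexity of $m_\alpha$, \cite[Lemma 7.6]{epde} and Lemma \ref{lk23} for the reaction term) is exactly the adaptation being gestured at. The only blemish is the parenthetical claim that $u^k\to U_0$ at $-\infty$, which is not established anywhere and should be dropped --- as you note, it is not needed, since the $[-L-1,-L]$ contribution is already bounded uniformly in $L$ by $\vert\hat z\vert\le\phi(U_0)$ and the uniform $L^1$ bound on $\tilde\beta^L_{xx}$.
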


The following corollary is immediate from (\ref{a12}) and the Mean Value Theorem.
\begin{corollary}\label{lcb22}
There exists a constant $C>0$ independently of $\varepsilon\ge0$ and $k>0$, such that for any solution $(u^k,v^k)$ of (\ref{a2}), we have
\begin{align}
\Vert\phi(u^k)(\cdot,t_0)-\phi(u^\infty_0)\Vert_{L^1(\mathbb{R})}\le C\quad{\rm and}\quad \Vert\phi(v^k)(\cdot,t_0)-\phi(v^\infty_0)\Vert_{L^1(\mathbb{R})}\le C,
\label{cb22}
\end{align}
for all $t_0\in[0,T]$.

\end{corollary}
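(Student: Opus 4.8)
The plan is to derive the $L^1$-bounds on $\phi(u^k)-\phi(u^\infty_0)$ and $\phi(v^k)-\phi(v^\infty_0)$ directly from the corresponding bounds on $u^k-u^\infty_0$ and $v^k-v^\infty_0$ established in Lemma \ref{lb22}, exploiting the fact that $\phi$ is Lipschitz on the compact range of values attained by the solutions. First I would record that, by Corollary \ref{ca12}, any solution satisfies $0\le u^k(x,t)\le U_0$ pointwise, while by construction $u^\infty_0$ takes only the values $0$ and $U_0$, so that $\phi(u^\infty_0)$ equals $\phi(U_0)$ for $x<0$ and $\phi(0)=0$ for $x>0$. In particular, both $u^k(x,t_0)$ and $u^\infty_0(x)$ lie in the common interval $[0,U_0]$ for every $x\in\mathbb{R}$ and every $t_0\in[0,T]$.

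Applying the Mean Value Theorem pointwise then gives, for each $x$,
\begin{align*}
|\phi(u^k(x,t_0))-\phi(u^\infty_0(x))|=\phi'(\theta(x))\,|u^k(x,t_0)-u^\infty_0(x)|,
\end{align*}
for some $\theta(x)$ lying between $u^k(x,t_0)$ and $u^\infty_0(x)$, and hence $\theta(x)\in[0,U_0]$. Since $\phi'$ is increasing, $\phi'(\theta(x))\le\phi'(U_0)=:N$, where $N$ depends only on $\phi$ and $U_0$, and not on $x$, $t_0$, $k$ or $\varepsilon$. Integrating this inequality over $\mathbb{R}$ and invoking Lemma \ref{lb22} yields
\begin{align*}
\Vert\phi(u^k)(\cdot,t_0)-\phi(u^\infty_0)\Vert_{L^1(\mathbb{R})}\le N\,\Vert u^k(\cdot,t_0)-u^\infty_0\Vert_{L^1(\mathbb{R})}\le NC,
\end{align*}
with $NC$ independent of $\varepsilon\ge0$, $k>0$ and $t_0\in[0,T]$, which is the first estimate of (\ref{cb22}).

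The bound for $\phi(v^k)-\phi(v^\infty_0)$ is obtained in exactly the same manner, now using $0\le v^k\le V_0$ from Corollary \ref{ca12}, the fact that $v^\infty_0$ takes only the values $0$ and $V_0$, and replacing $N=\phi'(U_0)$ by $\phi'(V_0)$ as the uniform Lipschitz constant of $\phi$ on $[0,V_0]$. Since the whole argument is a pointwise application of the Mean Value Theorem followed by integration, I do not expect any genuine obstacle; the only point requiring a little care is the observation that $u^k$ and $u^\infty_0$ (respectively $v^k$ and $v^\infty_0$) always take values in a fixed compact interval, so that $\phi'$ is bounded along the segment joining them and the constant $N$ can be taken uniform in $x$, $t_0$, $k$ and $\varepsilon$.
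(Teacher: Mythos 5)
Your proof is correct and follows exactly the route the paper intends: the paper derives this corollary "from (\ref{a12}) and the Mean Value Theorem" together with the $L^1$-bounds of Lemma \ref{lb22}, which is precisely your pointwise Mean Value Theorem argument with the uniform Lipschitz constant $\phi'(U_0)$ (resp.\ $\phi'(V_0)$) coming from the monotonicity of $\phi'$ on $[0,U_0]$ (resp.\ $[0,V_0]$). No differences worth noting.
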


The following is the whole-line analogue of Lemma \ref{ldnb}, which follows from the the similar arguments to the proof of Lemma \ref{ldnb}, replacing $\psi^L$ with $\hat\psi^L$ and integrals over $S_T$ with $Q_T$.
\begin{lemma}\label{ldnb2}
Suppose that $\varepsilon\>0$. Then there exists $C>0$, independent of $\varepsilon>0$ and $k>0$, such that for any solution $(u^k,v^k)$ of (\ref{a2}),
\label{ln2}
\begin{align}
\iint_{Q_T}|\phi(u^k)_x|^2{\rm d}x{\rm d}t\le C,\quad{\rm and}\quad \varepsilon\iint_{Q_T}|\phi(v^k)_x|^2{\rm d}x{\rm d}t\le C.
\end{align}
\end{lemma}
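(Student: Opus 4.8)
The plan is to mirror the proof of Lemma~\ref{ldnb}, testing the equation for $u^k$ against $[\phi(u^k)-\Phi_0]\hat\psi^L$ and integrating over $Q_T$, where $\hat\psi^L$ is the symmetric cut-off equal to $1$ on $[-L,L]$ and supported in $[-(L+1),L+1]$, with $\hat\psi^L_x,\hat\psi^L_{xx}$ bounded in $L^\infty(\mathbb{R})$ independently of $L$ and supported near $|x|=L$. The one genuinely new feature compared with the half-line is that on the whole line $u^k$ does not decay as $x\to-\infty$ but tends to $U_0$, so neither $u^k$ nor $\phi(u^k)$ is integrable. I therefore replace the constant $\phi(U_0)$ used in Lemma~\ref{ldnb} by a fixed smooth reference $\Phi_0\in C^\infty(\mathbb{R})$ with $\Phi_0\equiv\phi(U_0)$ for $x\le-1$ and $\Phi_0\equiv0$ for $x\ge1$, matching the two far-fields of $\phi(u^\infty_0)$. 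Smoothing the reference is important: testing against the discontinuous $\phi(u^\infty_0)$ would produce, on integrating by parts, a $\delta$-type contribution $\phi(U_0)\int_0^T\phi(u^k)_x(0,t)\,{\rm d}t$ which is not controlled, whereas $\Phi_0'$ is a bounded function supported in $[-1,1]$.

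After integrating the diffusion term by parts twice (with no boundary contributions, since $\hat\psi^L$ has compact support), I would arrive at
\begin{align*}
\iint_{Q_T}|\phi(u^k)_x|^2\hat\psi^L\,{\rm d}x{\rm d}t=&-A-D+\iint_{Q_T}\phi(u^k)_x\Phi_0'\hat\psi^L\,{\rm d}x{\rm d}t\\&+\frac12\iint_{Q_T}[\phi(u^k)-\Phi_0]^2\hat\psi^L_{xx}\,{\rm d}x{\rm d}t,
\end{align*}
valid for $L\ge2$ (so that ${\rm supp}\,\Phi_0'$ and ${\rm supp}\,\hat\psi^L_x$ are disjoint and the residual $\Phi_0'\hat\psi^L_x$ cross term vanishes), where $A:=\iint_{Q_T}u^k_t[\phi(u^k)-\Phi_0]\hat\psi^L$ is the time term and $D:=k\iint_{Q_T}u^kv^k[\phi(u^k)-\Phi_0]\hat\psi^L$ the reaction term. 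The reaction term is immediate: since $\phi(u^k)-\Phi_0$ is bounded in $L^\infty(Q_T)$, Lemma~\ref{lk23} gives $|D|\le C$. The cross term is absorbed by Young's inequality, $\iint\phi(u^k)_x\Phi_0'\hat\psi^L\le\frac12\iint|\phi(u^k)_x|^2\hat\psi^L+\frac12\iint|\Phi_0'|^2\hat\psi^L$, moving the first half into the left-hand side and bounding the second by $\frac12\|\Phi_0'\|^2_{L^2}T$. The last term is controlled using (\ref{a12}): $[\phi(u^k)-\Phi_0]^2$ is bounded and $\hat\psi^L_{xx}$ is bounded with $x$-support of measure $O(1)$, so it is $\le C$ uniformly in $L$ and $k$ (Corollary~\ref{lcb22} in fact forces it to vanish as $L\to\infty$, but boundedness suffices).

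The heart of the matter is the time term $A$. Writing $F(u):=\dint_0^u\phi(s)\,{\rm d}s$ and using that $\Phi_0$ is $t$-independent, $u^k_t[\phi(u^k)-\Phi_0]=\partial_t[F(u^k)-u^k\Phi_0]$, so $A=\int_{\mathbb{R}}[F(u^k)-u^k\Phi_0]\hat\psi^L\,{\rm d}x\big|_{t=0}^{T}$. To control this uniformly in $L$, I would subtract the $t$-independent background $R(x)$ with $R\equiv F(U_0)-U_0\phi(U_0)$ for $x\le-1$ and $R\equiv0$ for $x\ge1$; since $R$ does not depend on $t$ its contribution cancels between $t=T$ and $t=0$, giving $A=\int_{\mathbb{R}}[F(u^k)-u^k\Phi_0-R]\hat\psi^L\big|_0^T$. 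The key pointwise estimate, obtained from $0\le\phi(s)\le\phi(U_0)$ for $s\in[0,U_0]$ (monotonicity of $\phi$) together with (\ref{a12}), is $|F(u^k)-u^k\Phi_0-R|\le\phi(U_0)\,|u^k-u^\infty_0|$ for $|x|\ge1$, plus a bounded contribution on the fixed interval $(-1,1)$; hence $|A|\le C\sup_t\|u^k(\cdot,t)-u^\infty_0\|_{L^1(\mathbb{R})}+C\le C$ by Lemma~\ref{lb22}. Collecting the four bounds and letting $L\to\infty$ by the Monotone Convergence Theorem yields $\iint_{Q_T}|\phi(u^k)_x|^2\le C$. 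The estimate for $\varepsilon\iint_{Q_T}|\phi(v^k)_x|^2$ then follows by the reflected argument $x\mapsto-x$, testing the $v^k$-equation against $[\phi(v^k)-\Psi_0]\hat\psi^L$ with $\Psi_0\equiv0$ for $x\le-1$, $\Psi_0\equiv\phi(V_0)$ for $x\ge1$, and invoking Lemma~\ref{lb22}, Corollary~\ref{lcb22} and Lemma~\ref{lk23}, the factor $\varepsilon$ multiplying the diffusion term throughout. The main obstacle, as indicated, is organising the time term so that the non-decay of $u^k$ at $-\infty$ is absorbed into the $L^1$-bound on $u^k-u^\infty_0$ rather than producing an $L$-divergent or singular contribution.
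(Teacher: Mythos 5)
Your proof is correct and follows the route the paper intends for this lemma, namely the whole-line adaptation of the proof of Lemma \ref{ldnb} with the symmetric cut-off $\hat\psi^L$; in particular you correctly identify and handle the one genuine new difficulty, that $u^k$ does not decay as $x\to-\infty$, by organising the time term as a difference at $t=T$ and $t=0$ controlled via Lemma \ref{lb22}. The only substantive deviation is your smooth spatial reference $\Phi_0$ (and the associated background $R$, cross term and Young's inequality step), which is an unnecessary complication: testing against $[\phi(u^k)-\phi(U_0)]\hat\psi^L$ with the \emph{constant} $\phi(U_0)$, exactly as in Lemma \ref{ldnb}, already works on the whole line, because setting $G(u):=\int_0^u[\phi(s)-\phi(U_0)]\,{\rm d}s$ the time term becomes $\int_{\mathbb{R}}[G(u^k(x,T))-G(u^k_0(x))]\hat\psi^L\,{\rm d}x$, and since $G$ is Lipschitz with constant $\phi(U_0)$ on $[0,U_0]$ this is bounded by $\phi(U_0)\big(\Vert u^k(\cdot,T)-u_0^\infty\Vert_{L^1(\mathbb{R})}+\Vert u^k_0-u_0^\infty\Vert_{L^1(\mathbb{R})}\big)\le C$ by Lemma \ref{lb22}, while the $\hat\psi^L_{xx}$ and reaction terms are bounded exactly as you bound them; the ``$\delta$-type contribution'' you guard against never arises because the natural reference is the constant $\phi(U_0)$, not the discontinuous $\phi(u_0^\infty)$. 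What your variant buys is nothing essential here, though it costs you the extra $L\ge2$ support-separation bookkeeping; the constant-reference version is both shorter and closer to the half-line argument the paper tells the reader to copy.
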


Recall the notion for space and time translates introduced in (\ref{lnst}). The following results are the estimates for the differences of space and time translates of solutions, which follow from the same form of arguments used to show \cite[Lemma 2.15]{selfsim}. Here we using ${\rm sgn}(\phi(u^k)-\phi(S_\delta u^k))={\rm sgn}(u^k-S_\delta u^k)$ and \cite[Lemma 7.6]{epde} to deal with the nonlinear diffusion.
\begin{lemma}\label{lst2}
Suppose $\varepsilon\ge0$ and let $(u^k,v^k)$ be a solution of (\ref{a2}) satisfying (\ref{a12}). Then there exists a function $K\ge 0$ independent of $\varepsilon\ge0$ and $k>0$ such that $K(\delta)\rightarrow0$ as $|\delta|\rightarrow0$, and for $t\in(0,T]$,
\begin{align*}
\int_{\mathbb{R}}\left|\phi(u^k)-\phi(S_\delta u^k)\right|+\left|\phi(v^k)-\phi(S_\delta v^k)\right|{\rm d}x\le K(\delta).
\end{align*}
\end{lemma}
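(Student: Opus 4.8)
The plan is to follow the whole-line adaptation of Lemma \ref{lst}, exploiting two features of the line: there is no surface condition forcing $u^k$ to stay away from zero, so no interior cut-off is required and the estimate holds on all of $\mathbb{R}$; and the admissible modulus of continuity of the data is the whole-line function $\omega$ of hypothesis (iv), valid for every $\delta\in\mathbb{R}$. First I would set, in the notation of (\ref{nuv}),
\begin{align*}
u:=u^k-S_\delta u^k,\quad g:=\phi(u^k)-\phi(S_\delta u^k),\quad v:=v^k-S_\delta v^k,\quad n:=\phi(v^k)-\phi(S_\delta v^k),
\end{align*}
and subtract the equations for $(u^k,v^k)$ from those for the translates $(S_\delta u^k,S_\delta v^k)$, giving
\begin{align*}
u_t=g_{xx}-kr,\qquad v_t=\varepsilon n_{xx}-kr,\qquad r:=u^kv^k-S_\delta u^kS_\delta v^k.
\end{align*}
Let $\hat\psi^L$ be the symmetric whole-line cut-off used in Lemma \ref{ldnb2}, equal to $1$ on $[-L,L]$ and supported in $[-(L+1),L+1]$, with $\hat\psi^L_x,\hat\psi^L_{xx}$ bounded in $L^\infty(\mathbb{R})\cap L^1(\mathbb{R})$ uniformly in $L$, and let $m_\alpha$ be the smooth approximation of the modulus used in Lemma \ref{lkuv}. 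I would then multiply the $u$-equation by $(m_\alpha)'(g)\hat\psi^L$ and the $v$-equation by $(m_\alpha)'(n)\hat\psi^L$, add, and integrate over $\mathbb{R}\times(0,t_0)$.

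Letting $\alpha\rightarrow0$ and using that $\phi$ is strictly increasing, so that ${\rm sgn}(g)={\rm sgn}(u)$ and ${\rm sgn}(n)={\rm sgn}(v)$, together with \cite[Lemma 7.6]{epde} to treat the time derivatives, and integrating the diffusion terms by parts twice while discarding the nonpositive convexity contribution, I would arrive at
\begin{align*}
\int_{\mathbb{R}}\hat\psi^L(|u|+|v|)(x,t_0){\rm d}x\le&\int_{\mathbb{R}}\hat\psi^L(|u|+|v|)(x,0){\rm d}x+\int_0^{t_0}\int_{\mathbb{R}}\hat\psi^L_{xx}(|g|+\varepsilon|n|){\rm d}x{\rm d}t\\
&-k\int_0^{t_0}\int_{\mathbb{R}}\hat\psi^L\big[{\rm sgn}(g)+{\rm sgn}(n)\big]r\,{\rm d}x{\rm d}t.
\end{align*}
The last integrand is nonnegative: writing $a=u^k$, $a'=S_\delta u^k$, $b=v^k$, $b'=S_\delta v^k$, all nonnegative by (\ref{a12}), a short case analysis shows $\big[{\rm sgn}(a-a')+{\rm sgn}(b-b')\big](ab-a'b')\ge0$, the decisive point being that when $a-a'$ and $b-b'$ have opposite strict signs the two sign factors cancel, while if one factor vanishes the surviving one matches the sign of $ab-a'b'$. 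Hence the reaction term may be dropped.

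For the initial term, monotonicity of $\hat\psi^L$ and hypothesis (iv) give
\begin{align*}
\int_{\mathbb{R}}\hat\psi^L(|u|+|v|)(x,0){\rm d}x\le\Vert u^k_0(\cdot+\delta)-u^k_0\Vert_{L^1(\mathbb{R})}+\Vert v^k_0(\cdot+\delta)-v^k_0\Vert_{L^1(\mathbb{R})}\le\omega(\delta).
\end{align*}
For the cut-off term I would write $g(x,t)=-\int_0^\delta\phi(u^k)_x(x+\sigma,t){\rm d}\sigma$ and use the Cauchy--Schwarz inequality together with the translation invariance of the $L^2(\mathbb{R})$ norm and Lemma \ref{ldnb2}: since $\hat\psi^L_{xx}$ is supported on two intervals of unit length with height bounded independently of $L$, and $\iint_{Q_T}|\phi(u^k)_x|^2\le C$, $\varepsilon\iint_{Q_T}|\phi(v^k)_x|^2\le C$ uniformly in $k$ and $\varepsilon\ge0$, this yields
\begin{align*}
\int_0^{t_0}\int_{\mathbb{R}}\hat\psi^L_{xx}(|g|+\varepsilon|n|){\rm d}x{\rm d}t\le \tilde K|\delta|
\end{align*}
with $\tilde K$ independent of $L$, $k$ and $\varepsilon\ge0$ (the $\varepsilon|n|$ contribution is absent when $\varepsilon=0$, and otherwise is controlled by the $\varepsilon$-weighted bound, carrying a harmless factor $\sqrt\varepsilon$). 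Letting $L\rightarrow\infty$ and invoking Lebesgue's Monotone Convergence Theorem, which is legitimate because $g,n\in L^1(\mathbb{R})$ by Corollary \ref{lcb22}, produces, for every $t_0\in(0,T]$, $\int_{\mathbb{R}}(|u|+|v|)(x,t_0){\rm d}x\le\omega(\delta)+\tilde K|\delta|$. Finally, since $0\le u^k\le U_0$, $0\le v^k\le V_0$ and $\phi'$ is increasing, the Mean Value Theorem gives $|g|\le N|u|$ and $|n|\le N|v|$ with $N=\max\{\phi'(U_0),\phi'(V_0)\}$, so the claimed bound follows with $K(\delta):=N\big(\omega(\delta)+\tilde K|\delta|\big)$, which tends to $0$ as $|\delta|\rightarrow0$.

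The only genuinely new point compared with the half-line Lemma \ref{lst} is the sign of the reaction integrand, where the cross-term structure of $r$ and the nonnegativity of $u^k,v^k$ are used, and I expect this case analysis, rather than the cut-off estimate, to be the crux; the latter is a routine Cauchy--Schwarz argument once the gradient bounds of Lemma \ref{ldnb2} are available, and is in fact cleaner than on the half-line because translation invariance of Lebesgue measure removes the boundary adjustments needed there. A minor check is that all integrations by parts and the use of \cite[Lemma 7.6]{epde} are justified at the regularity carried by the solutions, exactly as in Lemma \ref{lst}.
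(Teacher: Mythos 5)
Your proposal is correct and follows essentially the argument the paper intends: the paper gives no separate proof of Lemma \ref{lst2}, referring instead to the half-line Lemma \ref{lst} and \cite[Lemma 2.15]{selfsim}, and your adaptation (Kato-type inequality via $m_\alpha$ and \cite[Lemma 7.6]{epde}, the sign identity ${\rm sgn}(\phi(a)-\phi(a'))={\rm sgn}(a-a')$, the case analysis showing $[{\rm sgn}(g)+{\rm sgn}(n)](u^kv^k-S_\delta u^kS_\delta v^k)\ge0$, the Cauchy--Schwarz cut-off estimate from Lemma \ref{ldnb2}, hypothesis (iv), and the final Mean Value Theorem step) is exactly that argument, correctly simplified by dropping the interior cut-off near $x=0$ that the half-line case requires.
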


The following result follows from arguments analogous to those used in the proof of Lemma \ref{ltt}, replacing $\psi^L$ by $\hat\psi^L$ and integrals over $\mathbb{R}^+$ by integrals over $\mathbb{R}$.
\begin{lemma}\label{ltt2}
Suppose $\varepsilon\ge0$ and let $(u^k,v^k)$ be a solution of (\ref{a2}) satisfying (\ref{a12}). Then there exists $C>0$, independent of $\varepsilon$ and $k$, such that for any $\tau\in(0,T)$,
\begin{align*}
\int_{0}^{T-\tau}\int_{\mathbb{R}}|\phi(T_\tau u^k)-\phi (u^k)|^2{\rm d}x{\rm d}t\le\tau C,\\
\int_{0}^{T-\tau}\int_{\mathbb{R}}|\phi(T_\tau v^k)-\phi (v^k)|^2{\rm d}x{\rm d}t\le\tau C.
\end{align*}
\end{lemma}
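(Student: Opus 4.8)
The plan is to follow the proof of Lemma~\ref{ltcl} (equivalently its global half-line version Lemma~\ref{ltt}) almost verbatim, replacing the one-sided cut-off $\psi^L$ by the symmetric whole-line cut-off $\hat\psi^L$ (with $\hat\psi^L\equiv1$ on $[-L,L]$ and $\hat\psi^L$ supported in $[-L-1,L+1]$, so that $\hat\psi^L_x,\hat\psi^L_{xx}$ are supported in $L\le|x|\le L+1$ and bounded in $L^\infty(\mathbb R)$ independently of $L$), and replacing all integrals over $\mathbb R^+$ by integrals over $\mathbb R$. Since $0\le u^k\le U_0$ by (\ref{a12}) and $\phi'$ is increasing, the Mean Value Theorem gives, with $N:=\phi'(U_0)$,
\begin{align*}
\int_0^{T-\tau}\!\!\int_{\mathbb R}\hat\psi^L|\phi(T_\tau u^k)-\phi(u^k)|^2\,{\rm d}x{\rm d}t\le N\int_0^{T-\tau}\!\!\int_{\mathbb R}\hat\psi^L|\phi(T_\tau u^k)-\phi(u^k)|\,|T_\tau u^k-u^k|\,{\rm d}x{\rm d}t.
\end{align*}
Writing $T_\tau u^k-u^k=\int_0^\tau u^k_t(\cdot,t+s)\,{\rm d}s=\int_0^\tau[\phi(u^k)_{xx}-ku^kv^k](\cdot,t+s)\,{\rm d}s$ and integrating by parts in $x$ splits the right-hand side into $I_1+I_2+I_3$, where $I_1$ carries the derivative onto $|\phi(T_\tau u^k)-\phi(u^k)|$, $I_2$ onto $\hat\psi^L$, and $I_3$ is the reaction contribution, exactly as in the proof of Lemma~\ref{ltcl}.

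The three terms are then bounded by whole-line \textit{a priori} estimates that are uniform in $k$, $\varepsilon$ and $L$. For $I_1$, Cauchy--Schwarz and the bound $\iint_{Q_T}|\phi(u^k)_x|^2\le C$ from Lemma~\ref{ldnb2} give $|I_1|\le 2\tau N\{\iint_{Q_T}|\phi(u^k)_x|^2\}^{1/2}\le C\tau$. For $I_2$, Cauchy--Schwarz together with (\ref{a12}) and the fact that $\hat\psi^L_x$ is supported in $L\le|x|\le L+1$ and bounded independently of $L$ bound $|I_2|$ by $C\tau\{\int_0^T\int_{L\le|x|\le L+1}|\phi(u^k)_x|^2\}^{1/2}$, which is $\le C\tau$ by Lemma~\ref{ldnb2}. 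For $I_3$, the $L^\infty$ bound (\ref{a12}) gives $|\phi(T_\tau u^k)-\phi(u^k)|\le 2\phi(U_0)$, so $|I_3|\le 2\phi(U_0)N\tau\iint_{Q_T}ku^kv^k\le C\tau$ by the key whole-line reaction bound in Lemma~\ref{lk23}. The estimate for $v^k$ is obtained the same way from $v^k_t=\varepsilon\phi(v^k)_{xx}-ku^kv^k$, with $N$ replaced by $\phi'(V_0)$; here the diffusion terms acquire a factor $\varepsilon$, which is absorbed because the gradient bound of Lemma~\ref{ldnb2} comes weighted as $\varepsilon\iint_{Q_T}|\phi(v^k)_x|^2\le C$, so that in the Cauchy--Schwarz step the prefactor $\varepsilon$ combines with the two half-powers of $\varepsilon^{-1}$ to leave a constant independent of $\varepsilon$.

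Finally, since $\hat\psi^L\nearrow1$ pointwise on $\mathbb R$ as $L\to\infty$, Lebesgue's Monotone Convergence Theorem upgrades the weighted estimate to the stated bound over all of $\mathbb R$, with $C$ independent of $k$, $\varepsilon$ and $\tau$. The point needing genuine care --- and the main obstacle compared with the purely local Lemma~\ref{ltcl}, whose constant was allowed to depend on $L$ --- is the uniformity in $L$ as $L\to\infty$: this rests on the fact that all three building blocks (the global gradient bound of Lemma~\ref{ldnb2}, the $L^\infty$ bound (\ref{a12}), and the reaction bound of Lemma~\ref{lk23}) hold over the whole strip $Q_T$ with $L$-independent constants, so that the cut-off derivative term $I_2$ stays controlled and, in fact, its tail $\int_0^T\int_{L\le|x|\le L+1}|\phi(u^k)_x|^2$ tends to $0$ as $L\to\infty$. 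The secondary subtlety is the bookkeeping of the $\varepsilon$-weights in the $v^k$ estimate described above, which is precisely what makes the final constant $\varepsilon$-independent.
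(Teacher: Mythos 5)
Your proposal is correct and follows essentially the same route as the paper, which itself only sketches this result as ``analogous to Lemma \ref{ltcl}/\ref{ltt} with $\psi^L$ replaced by $\hat\psi^L$ and $\mathbb{R}^+$ by $\mathbb{R}$'': you correctly identify the three whole-line ingredients (Lemma \ref{ldnb2}, the bound (\ref{a12}), and Lemma \ref{lk23}) that make the $I_1$, $I_2$, $I_3$ estimates uniform in $L$, $k$ and $\varepsilon$, and the monotone-convergence passage $L\to\infty$. The $\varepsilon$-bookkeeping for the $v^k$ equation is also handled as intended.
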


We can now prove a convergence result for solution $(u^k,v^k)$ of (\ref{a2}) as $\varepsilon\rightarrow0$.
\begin{lemma}\label{lest2}
Let $k>0$ be fixed and $(u_\varepsilon^k,v_\varepsilon^k)$ be solution of (\ref{a2}) satisfying (\ref{a12}) with $\varepsilon>0$. Then there exist $(u_\star^k,v_\star^k)\in\left(L^\infty(Q_T)\right)^2$ such that up to a subsequence, for each $J>0$
\begin{align*}
\phi(u_\varepsilon^k)&\rightarrow \phi(u_\star^k)\quad&&{\rm in}\quad L^2((0,J)\times(0,T)),\\
u_\varepsilon^k&\rightarrow u_\star^k\quad&&{\rm a.e.}\ {\rm in}\quad (0,J)\times(0,T),\\
\phi(v_\varepsilon^k)&\rightarrow \phi(v_\star^k)\quad&&{\rm in}\quad L^2((0,J)\times(0,T)),\\
v_\varepsilon^k&\rightarrow v_\star^k\quad&&{\rm a.e.}\ {\rm in}\quad (0,J)\times(0,T),\\
\phi(u_\varepsilon^k)-\phi(\tilde u)&\rightharpoonup \phi(u_\star^k)-\phi(\tilde u)\quad&&{\rm in}\quad L^2\left(0,T;W^{1,2}(\mathbb{R})\right),
\end{align*}
as $\varepsilon\rightarrow0$, where $\tilde u\in C^\infty(\mathbb R)$ is a smooth function such that $\tilde u=u^\infty_0$ for all $|x|>1$.
\end{lemma}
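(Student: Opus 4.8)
The plan is to follow the structure of the proof of Lemma \ref{lest}, replacing each half-line \textit{a priori} estimate by its whole-line counterpart, while taking care that for $v$ no bound on $\phi(v_\varepsilon^k)_x$ uniform in $\varepsilon$ is available, so that the strong compactness of $\{\phi(v_\varepsilon^k)\}_{\varepsilon>0}$ must be extracted entirely from the translate estimates.

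First I would record the uniform $L^2$ bounds. By Corollary \ref{ca12} the functions $\phi(u_\varepsilon^k)$ and $\phi(v_\varepsilon^k)$ are bounded in $L^\infty(Q_T)$ independently of $\varepsilon$, while Corollary \ref{lcb22} gives $\Vert\phi(u_\varepsilon^k)(\cdot,t_0)-\phi(u_0^\infty)\Vert_{L^1(\mathbb{R})}\le C$ and $\Vert\phi(v_\varepsilon^k)(\cdot,t_0)-\phi(v_0^\infty)\Vert_{L^1(\mathbb{R})}\le C$ uniformly in $\varepsilon$. Interpolating via $\Vert f\Vert_{L^2(Q_T)}^2\le\Vert f\Vert_{L^\infty(Q_T)}\Vert f\Vert_{L^1(Q_T)}$ then shows that $\phi(u_\varepsilon^k)-\phi(u_0^\infty)$ and $\phi(v_\varepsilon^k)-\phi(v_0^\infty)$ are bounded in $L^2(Q_T)$ independently of $\varepsilon$; since replacing $u_0^\infty$ by the smooth $\tilde u$ alters this only by a bounded function supported in $[-1,1]$, the same bound holds with $\tilde u$ in place of $u_0^\infty$.

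Next I would invoke the space-translate estimate Lemma \ref{lst2} and the time-translate estimate Lemma \ref{ltt2}, both with constants independent of $\varepsilon\ge0$. Combining these with the $L^2$ bounds and applying the Riesz--Fr\'echet--Kolmogorov Theorem \cite[Theorem 4.26]{sobs}, I conclude that both $\{\phi(u_\varepsilon^k)\}_{\varepsilon>0}$ and $\{\phi(v_\varepsilon^k)\}_{\varepsilon>0}$ are relatively compact in $L^2((-J,J)\times(0,T))$ for every $J>0$, hence in particular on the strips appearing in the statement. Passing to a diagonal subsequence as $\varepsilon\to0$ yields $\phi(u_\varepsilon^k)\to\chi_1$ and $\phi(v_\varepsilon^k)\to\chi_2$ strongly in $L^2_{loc}(Q_T)$ and a.e.; since $\phi^{-1}$ is continuous, setting $u_\star^k:=\phi^{-1}(\chi_1)$ and $v_\star^k:=\phi^{-1}(\chi_2)$ gives $\chi_1=\phi(u_\star^k)$, $\chi_2=\phi(v_\star^k)$ together with the a.e.\ convergences $u_\varepsilon^k\to u_\star^k$ and $v_\varepsilon^k\to v_\star^k$, while the bounds of Corollary \ref{ca12} pass to the limit so that $(u_\star^k,v_\star^k)\in(L^\infty(Q_T))^2$.

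Finally, for the weak $W^{1,2}$ convergence I would use the gradient bound Lemma \ref{ldnb2}, which controls $\iint_{Q_T}|\phi(u_\varepsilon^k)_x|^2$ uniformly in $\varepsilon$; together with the $L^2$ bound above this makes $\phi(u_\varepsilon^k)-\phi(\tilde u)$ bounded in $L^2(0,T;W^{1,2}(\mathbb{R}))$, so a further subsequence converges weakly there, and the weak limit must coincide with the strong $L^2_{loc}$ limit, giving $\phi(u_\varepsilon^k)-\phi(\tilde u)\rightharpoonup\phi(u_\star^k)-\phi(\tilde u)$. I expect the point requiring the most care to be precisely that this last argument is \emph{unavailable} for $v$: Lemma \ref{ldnb2} controls only $\varepsilon\iint_{Q_T}|\phi(v_\varepsilon^k)_x|^2$, which degenerates as $\varepsilon\to0$, so the strong compactness of $\{\phi(v_\varepsilon^k)\}$ cannot rest on any spatial gradient bound and must instead be obtained solely from the $\varepsilon$-independent translate estimates of Lemmas \ref{lst2} and \ref{ltt2}. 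This is exactly the reason those estimates were established with constants uniform in $\varepsilon\ge0$, and it is the step that distinguishes the present degenerate setting from a uniformly parabolic one.
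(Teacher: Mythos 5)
Your proposal is correct and follows essentially the same route as the paper, which likewise assembles Corollaries \ref{ca12} and \ref{lcb22}, the translate estimates of Lemmas \ref{lst2} and \ref{ltt2}, the gradient bound of Lemma \ref{ldnb2} and the Riesz--Fr\'echet--Kolmogorov Theorem, and then uses continuity of $\phi^{-1}$ for the a.e.\ convergence of $u_\varepsilon^k$ and $v_\varepsilon^k$. Your explicit remark that the compactness of $\{\phi(v_\varepsilon^k)\}$ must come from the $\varepsilon$-independent translate estimates rather than from any gradient bound is exactly the reason the paper states only the weak $W^{1,2}$ convergence for $\phi(u_\varepsilon^k)$ and not for $\phi(v_\varepsilon^k)$.
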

\begin{proof}It follows from Corollary \ref{ca12}, Lemma \ref{lcb22}, \ref{lst2}, \ref{ltt2}, \ref{ldnb2} together with the Riesz-Fr{\'e}chet-Kolmogorov Theorem \cite[Theorem 4.26]{sobs}. We know that $\phi(u_\varepsilon^k)\rightarrow \phi(u_\star^k)$ and $\phi(v_\varepsilon^k)\rightarrow \phi(v_\star^k)$ almost everywhere in $(-J,J)\times(0,T)$, so since $\phi^{-1}$ is continuous, then we have
$u_\varepsilon^k\rightarrow \phi^{-1}(\phi(u_\star^k))$ and $v_\varepsilon^k\rightarrow \phi^{-1}(\phi(v_\star^k))$ almost everywhere in $(-J,J)\times(0,T)$.
\end{proof}

Lemma \ref{lest2} and Corollary \ref{ca12} enable the following result to be established using arguments similar to those that yield Theorem \ref{te0}. We omit details of the proof.

\begin{theorem}
Let $\varepsilon=0$ and $k>0$. Then Problem (\ref{a2}) has a unique weak solution $(u^k,v^k)\in(L^\infty(Q_T))^2$ such that
\begin{itemize}
\item[{\rm (i)}]$\phi(u^k)\in L^2(0,T;W^{1,2}((-J,J)))$;
\item[{\rm (ii)}]$(u^k,v^k)$ satisfies
\begin{align*}
&\int_{\mathbb{R}}u^k_{0}\Psi(x,0){\rm d}x+\iint_{Q_T}u^k\Psi_t{\rm d}x{\rm d}t=\iint_{Q_T}\phi(u^k)_x\Psi_{x}{\rm d}x{\rm d}t+k\iint_{Q_T}\Psi u^kv^k{\rm d}x{\rm d}t,\\
&\int_{\mathbb{R}}v^k_{0}\Psi(x,0){\rm d}x+\iint_{Q_T}v^k\Psi_t{\rm d}x{\rm d}t=k\iint_{Q_T}\Psi u^kv^k{\rm d}x{\rm d}t,
\end{align*}
for all $\Psi\in\mathcal{\hat F}_T$, where $\mathcal{\hat F}_T$ is defined in (\ref{F}).
\end{itemize}
\end{theorem}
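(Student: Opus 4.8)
The plan is to follow exactly the strategy of Theorem \ref{te0}, constructing the $\varepsilon=0$ solution as a limit of the $\varepsilon>0$ solutions and deducing uniqueness from the comparison principle. First I would fix $k>0$ and, for each $\varepsilon>0$, take the weak solution $(u_\varepsilon^k,v_\varepsilon^k)$ of (\ref{a2}) furnished by the existence result for $\varepsilon>0$ established earlier in this section, which satisfies the uniform bounds (\ref{a12}) by Corollary \ref{ca12}. Invoking Lemma \ref{lest2}, I would extract a subsequence $\varepsilon\to0$ along which $\phi(u_\varepsilon^k)\to\phi(u_\star^k)$ and $\phi(v_\varepsilon^k)\to\phi(v_\star^k)$ strongly in $L^2((-J,J)\times(0,T))$, $u_\varepsilon^k\to u_\star^k$ and $v_\varepsilon^k\to v_\star^k$ almost everywhere in $Q_T$, and $\phi(u_\varepsilon^k)-\phi(\tilde u)\rightharpoonup\phi(u_\star^k)-\phi(\tilde u)$ weakly in $L^2(0,T;W^{1,2}(\mathbb{R}))$. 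I then set $(u^k,v^k):=(u_\star^k,v_\star^k)$, so that regularity (i) is inherited from the weak limit.

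The heart of the argument is passing to the limit $\varepsilon\to0$ in the two weak identities, and the disappearance of the diffusion term in the $v$-equation is the crucial new feature relative to the $\varepsilon>0$ case. Writing $\varepsilon\phi(v_\varepsilon^k)_x=\sqrt\varepsilon\,(\sqrt\varepsilon\,\phi(v_\varepsilon^k)_x)$ and using the bound $\varepsilon\iint_{Q_T}|\phi(v_\varepsilon^k)_x|^2\le C$ from Lemma \ref{ldnb2}, the factor $\sqrt\varepsilon\,\phi(v_\varepsilon^k)_x$ is bounded in $L^2(Q_T)$, whence $\varepsilon\phi(v_\varepsilon^k)_x\to0$ in $L^2(Q_T)$ and the term $\iint_{Q_T}\varepsilon\phi(v_\varepsilon^k)_x\Psi_x$ vanishes in the limit, leaving the diffusionless $v$-identity required by the statement. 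In the $u$-equation the term $\iint_{Q_T}\phi(u_\varepsilon^k)_x\Psi_x$ converges by the weak $L^2$ convergence of $\phi(u_\varepsilon^k)_x$ tested against the compactly supported $\Psi_x$. The reaction terms $k\iint_{Q_T}\Psi u_\varepsilon^k v_\varepsilon^k$ and the time-derivative terms $\iint_{Q_T}u_\varepsilon^k\Psi_t$, $\iint_{Q_T}v_\varepsilon^k\Psi_t$ all converge by the almost-everywhere convergence of $u_\varepsilon^k,v_\varepsilon^k$, the uniform $L^\infty$ bounds (\ref{a12}), and the Dominated Convergence Theorem, noting that $\Psi\in\mathcal{\hat F}_T$ has compact support in $x$. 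Since $u_0^k,v_0^k$ are independent of $\varepsilon$, the initial-data terms are unchanged, and this shows $(u^k,v^k)$ solves the stated weak formulation with $\varepsilon=0$.

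Uniqueness is then immediate from the comparison principle of Lemma \ref{lu2}, which holds for all $\varepsilon\ge0$ and in particular covers $\varepsilon=0$: applying it with the two solutions taken as sub- and supersolution in each order forces $u^k$ and $v^k$ to coincide. I expect the main technical obstacle to be verifying that the candidate limit has the regularity needed to invoke Lemma \ref{lu2}, in particular $u^k_t,\phi(u^k)_{xx}\in L^1(Q_T)$ together with the continuity of $u^k$; as in Theorem \ref{er} and Theorem \ref{te0}, this should follow by recognising $u^k$ as a weak solution of the scalar problem with known bounded coefficient $p=v^k$ and appealing to the regularity theory of DiBenedetto \cite[Theorem 7.1]{dib}. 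All remaining steps are routine adaptations of the half-line arguments, replacing $\psi^L$ by $\hat\psi^L$, $S_T$ by $Q_T$, and $\mathbb{R}^+$ by $\mathbb{R}$.
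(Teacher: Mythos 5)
Your proposal is correct and follows essentially the same route as the paper, which itself establishes this result by passing to the limit $\varepsilon\rightarrow0$ via Lemma \ref{lest2} and Corollary \ref{ca12} (mirroring Theorem \ref{te0}) and deducing uniqueness from the comparison principle of Lemma \ref{lu2}. Your explicit treatment of the vanishing diffusion term via the splitting $\varepsilon\phi(v_\varepsilon^k)_x=\sqrt\varepsilon\,\bigl(\sqrt\varepsilon\,\phi(v_\varepsilon^k)_x\bigr)$ together with the bound of Lemma \ref{ldnb2} supplies a detail the paper omits, and is a welcome addition.
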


\subsection{The limit problem for (\ref{a2}) as $k\rightarrow\infty$}
The next result follows directly from arguments similar to those used in Section 2.3, exploiting the whole-line estimates established in Section 4.2.
\begin{lemma}\label{lli}
Let $\varepsilon\ge0$ be fixed and $(u^k,v^k)$ be solutions of (\ref{a}) satisfying (\ref{a12}) with $k>0$. Then there exists $(u,v)\in(L^\infty(Q_T))^2$ such that up to a subsequence, for each $J>0$
\begin{align*}
\phi(u^k)&\rightarrow \phi(u)\quad&&{\rm in}\quad L^2((-J,J)\times(0,T)),\\
u^k&\rightarrow u\quad&&{\rm a.e.}\ {\rm in}\quad (-J,J)\times(0,T),\\
\phi(v^k)&\rightarrow \phi(v)\quad&&{\rm in}\quad L^2((-J,J)\times(0,T)),\\
v^k&\rightarrow v\quad&&{\rm a.e.}\ {\rm in}\quad (-J,J)\times(0,T),\\
\phi(u^k)-\phi(\tilde u)&\rightharpoonup \phi(u)-\phi(\tilde u)\quad&&{\rm in}\quad L^2\left(0,T;W^{1,2}(\mathbb{R})\right),
\end{align*}
and for $\varepsilon>0$
\begin{align*}
\phi(v^k)-\phi(\tilde v)&\rightharpoonup \phi(v)-\phi(\tilde v)\quad&&{\rm in}\quad L^2\left(0,T;W^{1,2}(\mathbb{R})\right),
\end{align*}
as $k\rightarrow\infty$, where $\tilde u,\tilde v\in C^\infty(\mathbb R)$ are smooth functions such that $\tilde u=u^\infty_0,\tilde v=v_0^\infty$ for all $|x|>1$. Moreover
\begin{align}
uv=0\ {\rm a.e.}\ in\ Q_T.
\end{align}

\end{lemma}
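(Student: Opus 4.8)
The plan is to transcribe the proof of Lemma \ref{lc} (together with the segregation argument of Lemma \ref{lkl}) to the whole line, substituting the \emph{a priori} estimates of Section 4.2 for their half-line analogues. First I would fix $J>0$ and observe that, by Corollary \ref{ca12}, $\phi(u^k)$ and $\phi(v^k)$ are bounded in $L^\infty((-J,J)\times(0,T))$, hence in $L^2((-J,J)\times(0,T))$, uniformly in $k$ and $\varepsilon\ge0$. The space-translate bound of Lemma \ref{lst2} and the time-translate bound of Lemma \ref{ltt2}, both uniform in $k$ and $\varepsilon$, supply the equicontinuity of translates required by the Riesz--Fr\'echet--Kolmogorov theorem \cite[Theorem 4.26]{sobs}, which then gives that $\{\phi(u^k)\}_k$ and $\{\phi(v^k)\}_k$ are relatively compact in $L^2((-J,J)\times(0,T))$. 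Taking $J=1,2,\dots$ and extracting a diagonal subsequence produces strong limits in $L^2_{loc}(Q_T)$, and a further subsequence converges almost everywhere on each $(-J,J)\times(0,T)$.

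To identify the limits, I would use that $\phi$ is continuous and strictly increasing, so $\phi^{-1}$ is continuous; writing $u:=\phi^{-1}(\lim\phi(u^k))$ and $v:=\phi^{-1}(\lim\phi(v^k))$ gives $\phi(u^k)\to\phi(u)$, $\phi(v^k)\to\phi(v)$ in $L^2((-J,J)\times(0,T))$ and, by continuity of $\phi^{-1}$ applied to the almost-everywhere convergence, $u^k\to u$, $v^k\to v$ almost everywhere; Corollary \ref{ca12} then places $(u,v)$ in $(L^\infty(Q_T))^2$. For the weak gradient convergence, note that $\phi(u^k)-\phi(\tilde u)$ is bounded in $L^2(Q_T)$, combining the $L^2$-bound of $\phi(u^k)-\phi(u^\infty_0)$ coming from Corollary \ref{lcb22} and Corollary \ref{ca12} with the fact that $\phi(u^\infty_0)-\phi(\tilde u)$ is bounded and supported in $[-1,1]$, while $\phi(u^k)_x$ is bounded in $L^2(Q_T)$ by Lemma \ref{ldnb2} and $\phi(\tilde u)_x$ is a fixed compactly supported bounded function. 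Hence $\phi(u^k)-\phi(\tilde u)$ is bounded in $L^2(0,T;W^{1,2}(\mathbb{R}))$; a weakly convergent subsequence must have weak limit $\phi(u)-\phi(\tilde u)$, since this already holds strongly in $L^2_{loc}(Q_T)$ and weak limits are unique, and the $\varepsilon$-weighted bound in Lemma \ref{ldnb2} treats $v$ identically when $\varepsilon>0$.

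The segregation $uv=0$ is where the key bound enters. By Lemma \ref{lk23}, $\iint_{Q_T}ku^kv^k{\rm d}x{\rm d}t\le C$ uniformly in $k$, so $\iint_{Q_T}u^kv^k{\rm d}x{\rm d}t\le C/k\to0$ as $k\to\infty$. On the other hand, the almost-everywhere convergences just obtained give $u^kv^k\to uv$ almost everywhere, and $0\le u^kv^k\le U_0V_0$ by Corollary \ref{ca12}, so Lebesgue's Dominated Convergence Theorem yields $\iint_{(-J,J)\times(0,T)}u^kv^k{\rm d}x{\rm d}t\to\iint_{(-J,J)\times(0,T)}uv{\rm d}x{\rm d}t$ for each $J$. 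Comparing the two and letting $J\to\infty$ forces $\iint_{Q_T}uv{\rm d}x{\rm d}t=0$; since $u,v\ge0$ this gives $uv=0$ almost everywhere in $Q_T$.

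I expect the only genuine subtlety to lie in the nonlinear-diffusion bookkeeping: all the compactness is established for $\phi(u^k)$ and $\phi(v^k)$, not for $u^k,v^k$ directly, so one must pass through continuity of $\phi^{-1}$ to recover almost-everywhere convergence of $u^k,v^k$ before invoking dominated convergence in the segregation step. The remainder is a faithful transcription of the half-line arguments of Section 2.3, with $\mathbb{R}^+$ replaced by $\mathbb{R}$ and the one-sided cut-offs replaced by the two-sided families of Section 4.2.
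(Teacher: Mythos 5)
Your proposal is correct and follows essentially the same route the paper intends: the paper's own proof of this lemma is a one-line reference to the arguments of Section 2.3 (Lemmas \ref{lc} and \ref{lkl}) combined with the whole-line estimates of Section 4.2 (Corollary \ref{ca12}, Lemmas \ref{lk23}, \ref{lcb22}, \ref{lst2}, \ref{ltt2}, \ref{ldnb2} and the Riesz--Fr\'echet--Kolmogorov theorem), which is exactly the chain of results you invoke, including the passage through continuity of $\phi^{-1}$ and the dominated-convergence segregation argument.
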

Taking $w^k$ and $w$ as 
\begin{align}
w^k:=u^k-v^k,\quad w:=u-v,\label{d22}
\end{align}
we clearly again have that as a sequence $k_n\rightarrow\infty$, $w^{k_n}\rightarrow w$ in $L^2(Q_T)$ and almost everywhere in $Q_T$, and that $u=w^+,\ v=w^-$.

The following result focus on the function $u-v$, which is useful on the derivation of the limit problem, which follows by using Lemma \ref{lli} and the fact that $u^k_0\rightarrow u^\infty_0$ and $v^k_0\rightarrow v^\infty_0$ as $k\rightarrow\infty$.
\begin{lemma}\label{lw22}
Let $\varepsilon\ge0$ and $(u,v)$ be as in Lemma \ref{lli}. Then
\begin{align*}
\iint_{Q_T}(u-v)\Psi_t{\rm d}x{\rm d}t+\int_{\mathbb{R}}(u^\infty_0-v^\infty_0)\Psi(x,0){\rm d}x=\iint_{Q_T}\left[\phi(u)_x-\varepsilon\phi(v)_x\right]\Psi_{x}{\rm d}x{\rm d}t,
\end{align*}
for all $\Psi\in\mathcal{\hat F}_T$.

\end{lemma}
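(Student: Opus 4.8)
The plan is to pass to the limit $k\to\infty$ in the \emph{difference} of the two weak formulations satisfied by $(u^k,v^k)$, exploiting that the reaction terms cancel exactly. For every $\Psi\in\mathcal{\hat F}_T$ the weak solution of (\ref{a2}) satisfies the two identities recorded in the existence theorems, both of which carry the \emph{same} reaction term $k\iint_{Q_T}\Psi u^kv^k{\rm d}x{\rm d}t$ on their right-hand sides (the $v$-flux carrying the factor $\varepsilon$, so that it is simply absent when $\varepsilon=0$). Subtracting the $v$-identity from the $u$-identity, these reaction terms cancel identically and leave
\begin{align*}
\int_{\mathbb{R}}(u_0^k-v_0^k)\Psi(x,0){\rm d}x+\iint_{Q_T}(u^k-v^k)\Psi_t{\rm d}x{\rm d}t=\iint_{Q_T}\left[\phi(u^k)_x-\varepsilon\phi(v^k)_x\right]\Psi_x{\rm d}x{\rm d}t.
\end{align*}
This cancellation is the crucial structural observation: although the individual reaction term $k\Psi u^kv^k$ cannot be controlled termwise as $k\to\infty$, it never needs to be, since it is eliminated before any limit is taken.

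It then remains to pass to the limit in each of the three surviving terms along the subsequence of Lemma \ref{lli}. Since $\Psi\in\mathcal{\hat F}_T$ is supported in $[-J,J]\times[0,T]$ for some $J>0$, with $\Psi,\Psi_t,\Psi_x$ bounded, every integral reduces to one over $(-J,J)\times(0,T)$, so the \emph{local} convergences of Lemma \ref{lli} suffice. For the initial-data term I would invoke assumption (iii), that $u_0^k\to u^\infty_0$ and $v_0^k\to v^\infty_0$ in $L^1(\mathbb{R})$, together with $\Psi(\cdot,0)\in L^\infty(\mathbb{R})$ of compact support, to send $\int_{\mathbb{R}}(u_0^k-v_0^k)\Psi(x,0)\,{\rm d}x$ to $\int_{\mathbb{R}}(u^\infty_0-v^\infty_0)\Psi(x,0)\,{\rm d}x$. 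For the time-derivative term, $w^k=u^k-v^k\to w=u-v$ in $L^2((-J,J)\times(0,T))$ by (\ref{d22}) and Lemma \ref{lli}, and pairing against $\Psi_t$ yields convergence to $\iint_{Q_T}(u-v)\Psi_t\,{\rm d}x{\rm d}t$.

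For the flux term I would use the weak gradient convergences from Lemma \ref{lli}. When $\varepsilon>0$ both $\phi(u^k)_x\rightharpoonup\phi(u)_x$ and $\phi(v^k)_x\rightharpoonup\phi(v)_x$ weakly in $L^2(Q_T)$, and since $\Psi_x\in L^2(Q_T)$ the pairing converges to $\iint_{Q_T}[\phi(u)_x-\varepsilon\phi(v)_x]\Psi_x\,{\rm d}x{\rm d}t$. When $\varepsilon=0$ the $v$-flux term is absent, so only $\phi(u^k)_x\rightharpoonup\phi(u)_x$ --- which holds for all $\varepsilon\ge0$ --- is required. Keeping the factor $\varepsilon$ attached to the $v$-flux throughout is precisely what lets both cases be handled uniformly and avoids invoking a gradient bound on $\phi(v^k)$ that is unavailable when $\varepsilon=0$. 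Collecting the three limits gives the asserted identity. I expect the only delicate point to be this consistent treatment of the degenerate case; the limit passage itself is routine given the convergences already established in Lemma \ref{lli}.
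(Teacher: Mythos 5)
Your proposal is correct and is exactly the argument the paper intends: the paper gives no written proof of this lemma, only the remark that it ``follows by using Lemma \ref{lli} and the fact that $u^k_0\rightarrow u^\infty_0$ and $v^k_0\rightarrow v^\infty_0$,'' which is precisely your subtraction of the two weak formulations (cancelling the $ku^kv^k$ terms) followed by passage to the limit using the local strong/weak convergences of Lemma \ref{lli} and assumption (iii) on the initial data. Your handling of the $\varepsilon=0$ case by keeping the factor $\varepsilon$ attached to the $v$-flux is also consistent with how the paper treats the degenerate case throughout.
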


Now recall the definition of $\mathcal{D}$ from (\ref{c0}) and define the limit problem
\begin{align}
\left\{\begin{aligned}
&w_t=\mathcal{D}(w)_{xx},\quad {\rm in}\ Q_T,\\
&w(x,0)=w_0(x):=\left\{\begin{aligned}&U_0,\quad &&{\rm if}\ x<0,\\-&V_0,&&{\rm if}\ x>0.\end{aligned}\right.
\end{aligned}\right.
\label{c2}
\end{align}

\begin{definition}\label{d122}
A function $w$ is a weak solution of problem (\ref{c2}) if
\begin{itemize}
\item[{\rm(i)}]$w\in L^\infty(Q_T)$,
\item[{\rm(ii)}]$\mathcal{D}(w)\in\mathcal{D}(\hat w)+L^{2}(0,T;W^{1,2}(\mathbb{R}))$, where $\hat w\in C^\infty(\mathbb{R})$ is a smooth function with $\hat w=U_0$ when $x<-1$ and $\hat w=-V_0$ when $x>1$,
\item[{\rm(iii)}]$w$ satisfies for all $T>0$
\begin{align}
\int_{\mathbb{R}}w_0\Psi(x,0){\rm d}x+\iint_{Q_T}w\Psi_t{\rm d}x{\rm d}t=\iint_{Q_T}\mathcal{D}(w)_x\Psi_{x}{\rm d}x{\rm d}t,
\label{wlw}
\end{align}
for all $\Psi\in\mathcal{\hat F}_T$.
\end{itemize}

\end{definition}

\begin{theorem}\label{tu4}
The function $w$ defined in (\ref{d22}) is a unique weak solution of problem (\ref{c2}) and the whole sequence $(u^k,v^k)$ in Lemma \ref{lli} converges to $(w^+,-w^-)$.
\end{theorem}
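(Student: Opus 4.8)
The plan is to establish existence and uniqueness separately, mirroring the half-line treatment in which Theorem~\ref{tu} supplied existence and Theorem~\ref{tu211} supplied uniqueness, but now working over $Q_T$ and $\mathbb{R}$ in place of $S_T$ and $\mathbb{R}^+$, and with the test-function class $\mathcal{\hat F}_T$. For existence I would verify that $w=u-v$ meets Definition~\ref{d122}. Parts (i) and (ii) are read off from Lemma~\ref{lli}: the bounds $0\le u\le U_0$, $0\le v\le V_0$ give $w\in L^\infty(Q_T)$, and the weak $L^2(0,T;W^{1,2}(\mathbb{R}))$ convergences of $\phi(u^k)-\phi(\tilde u)$ and, for $\varepsilon>0$, of $\phi(v^k)-\phi(\tilde v)$ supply the required regularity of $\mathcal{D}(w)$. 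The crux of (iii) is the pointwise identity $\mathcal{D}(w)=\phi(u)-\varepsilon\phi(v)$, which follows from the segregation $uv=0$ together with $u=w^+$, $v=-w^-$: on $\{w\ge0\}$ one has $v=0$ and $\mathcal{D}(w)=\phi(w)=\phi(u)$, whereas on $\{w<0\}$ one has $u=0$ and $\mathcal{D}(w)=-\varepsilon\phi(-w)=-\varepsilon\phi(v)$, using $\phi(0)=0$. Hence $\mathcal{D}(w)_x=\phi(u)_x-\varepsilon\phi(v)_x$, and substituting this together with $u-v=w$ and $u_0^\infty-v_0^\infty=w_0$ into the identity of Lemma~\ref{lw22} yields precisely the weak form (\ref{wlw}).

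For uniqueness I would repeat the duality argument of Theorem~\ref{tu211} on the whole line. Given two solutions $w,\tilde w$ with common data, density of smooth test functions reduces the weak formulation to
\begin{align*}
0=\iint_{Q_T}(w-\tilde w)(\hat\xi_t+a\hat\xi_{xx})\,{\rm d}x{\rm d}t+\int_{\mathbb{R}}(w_0-\tilde w_0)\hat\xi(x,0)\,{\rm d}x,
\end{align*}
where $a=(\mathcal{D}(w)-\mathcal{D}(\tilde w))/(w-\tilde w)$ when $w\ne\tilde w$ and $a=0$ otherwise. Since $a\in L^\infty(Q_T)$ may degenerate, I regularise by smooth $a_n$ with $\frac{1}{n}\le a_n\le\Vert a\Vert_{L^\infty(Q_T)}+\frac{1}{n}$ and $(a_n-a)/\sqrt{a_n}\to0$ almost everywhere, and solve the backward problem $\lambda=\tilde\xi_{nt}+a_n\tilde\xi_{nxx}$ on $Q_T$ with terminal condition $\tilde\xi_n(\cdot,T)=0$. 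The sole structural change from Theorem~\ref{tu211} is that the whole line has no lateral boundary, so the condition $\tilde\xi_n(0,t)=0$ is simply dropped; the bound $\Vert\tilde\xi_n\Vert_{L^\infty(Q_T)}\le C(\Vert\lambda\Vert_{L^\infty(Q_T)},T)$ via comparison with $\pm e^{\alpha(T-t)}$ and the weighted estimate $\iint_{Q_T}a_n|\tilde\xi_{nxx}|^2\le C(\lambda,T)$ carry over unchanged. Testing with $\tilde\xi_n$, controlling $(a-a_n)\tilde\xi_{nxx}$ by H\"older's inequality and $(a_n-a)/\sqrt{a_n}\to0$, then sending $n\to\infty$ and choosing $\lambda_i\to{\rm sgn}(w-\tilde w)$, yields
\begin{align*}
\iint_{Q_T}|w-\tilde w|\,{\rm d}x{\rm d}t\le C(T)\int_{\mathbb{R}}|w_0-\tilde w_0|\,{\rm d}x,
\end{align*}
so equal data force $w=\tilde w$.

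With uniqueness established, the convergence of the whole sequence is the standard subsequence argument: Lemma~\ref{lli} shows that every subsequence of $(u^k,v^k)$ has a further subsequence converging a.e.\ to some $(w^+,-w^-)$ with $w$ a weak solution of (\ref{c2}), and uniqueness of that solution upgrades this to convergence of the full sequence. I expect the uniqueness step to be the main obstacle: because $\mathcal{D}$ is degenerate---when $\varepsilon=0$ it vanishes identically on $\{w<0\}$, and $\phi'$ may vanish at $0$---the coefficient $a$ is not bounded below, so the entire weight of the proof lies in constructing the regularised dual solutions $\tilde\xi_n$ and controlling the error term $(a-a_n)\tilde\xi_{nxx}$ uniformly on the unbounded domain.
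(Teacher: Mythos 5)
Your proposal is correct and follows essentially the same route as the paper: existence is read off from Definition~\ref{d122} and Lemma~\ref{lw22} via the segregation identity $\mathcal{D}(w)=\phi(u)-\varepsilon\phi(v)$, and uniqueness is obtained by repeating the duality argument of Theorem~\ref{tu211} with $\mathbb{R}^+$ replaced by $\mathbb{R}$, after which the whole-sequence convergence follows by the standard subsequence argument. Your write-up in fact supplies more detail than the paper's own (very brief) proof, and the details you supply are the right ones.
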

\begin{proof}The existence of a weak solution is a straightforward consequence of Definition \ref{d122} and Lemma \ref{lw22}. The fact that the whole sequence $(u^k,v^k)$ converges to $(w^+,-w^-)$ follows from the uniqueness, which can be proved by using the arguments analogous to those used in Theorem \ref{tu211}, replacing spatial domain $\mathbb{R}^+$ by $\mathbb{R}$.
\end{proof}

\section*{Acknowledgements}
The authors gratefully acknowledge funding from the EPSRC EP/W522545/1. This paper is based on part of the corresponding author's Ph.D thesis at Swansea University.

\end{document}